\documentclass[12pt,a4paper]{amsart}

\usepackage{amssymb}
\usepackage[utf8]{inputenc}
\usepackage{enumerate,ushort,bbm,titletoc,a4}
\usepackage[pagebackref,colorlinks,linkcolor=red,citecolor=blue,urlcolor=blue,hypertexnames=true]{hyperref}

\DeclareMathOperator{\End}{End}
\DeclareMathOperator{\Hom}{Hom}
\DeclareMathOperator{\sign}{sign}

\DeclareMathOperator{\supp}{supp}
\DeclareMathOperator{\dom}{dom}
\DeclareMathOperator{\edom}{edom}
\DeclareMathOperator{\edomst}{edom^{st}}
\DeclareMathOperator{\lin}{Span}
\newcommand{\der}{\mathrm d}
\newcommand{\N}{\mathbb{N}}
\newcommand{\F}{\mathbb{F}}

\newcommand{\R}{\mathbb{R}}
\newcommand{\C}{\mathbb{C}}
\newcommand{\rr}{\mathbbm{r}}
\newcommand{\rs}{\mathbbm{s}}
\newcommand{\cA}{\mathcal{A}}
\newcommand{\cB}{\mathcal{B}}
\newcommand{\cH}{\mathcal{H}}
\newcommand{\cM}{\mathcal{M}}
\newcommand{\cN}{\mathcal{N}}
\newcommand{\cO}{\mathcal{O}}
\newcommand{\cR}{\mathcal{R}}
\newcommand{\cU}{\mathcal{U}}
\newcommand{\cV}{\mathcal{V}}
\newcommand{\cW}{\mathcal{W}}

\newcommand{\bb}{\mathbf{b}}
\newcommand{\cc}{\mathbf{c}}
\newcommand{\uu}{\mathbf{u}}
\newcommand{\vv}{\mathbf{v}}
\newcommand{\fW}{\mathfrak{W}}
\newcommand{\fX}{\mathfrak{\ulx}}
\newcommand{\fx}{\mathfrak{x}}
\newcommand{\ud}{\operatorname{UD}_{m;g}}

\newcommand{\ulT}{\ushort T}
\newcommand{\ula}{\ushort a}
\newcommand{\ulb}{\ushort b}
\newcommand{\ulp}{\ushort p}
\newcommand{\ulq}{\ushort q}
\newcommand{\ulx}{\ushort x}
\newcommand{\ult}{\ushort t}
\newcommand{\uly}{\ushort y}
\newcommand{\ulz}{\ushort z}
\newcommand{\ulfx}{\ushort \fx}
\newcommand{\ulxi}{\ushort \xi}

\newcommand*{\edomstn}[1]{\edom^{\operatorname{st}}_{#1}}

\newcommand{\Langle}{\langle}
\newcommand{\Rangle}{\rangle}

\newcommand{\gp}{\Langle \ulx \Rangle}
\newcommand{\gpfx}{\Langle \ulfx \Rangle}
\newcommand{\agp}{\cA\gp}
\newcommand{\gs}{\Langle\!\Langle \ulx \Rangle\!\Rangle}
\newcommand{\ags}{\cA\gs}
\newcommand{\eqind}{\overset{\underset{\mathrm{ind}}{}}{=}}

\makeatletter
\def\moverlay{\mathpalette\mov@rlay}
\def\mov@rlay#1#2{\leavevmode\vtop{
    \baselineskip\z@skip \lineskiplimit-\maxdimen
    \ialign{\hfil$#1##$\hfil\cr#2\crcr}}}
\makeatother

\newcommand{\plangle}{\moverlay{(\cr<}}
\newcommand{\prangle}{\moverlay{)\cr>}}

\newcommand{\pz}{\Langle \ulz \Rangle}
\newcommand{\rf}{\F\plangle \ulz \prangle}
\newcommand{\re}{\cR_\F(\ulz)}

\textwidth = 6.25 in
\textheight = 9.75 in
\oddsidemargin = 0.0 in
\evensidemargin = 0.0 in
\topmargin = -0.2 in
\headheight = 0.0 in
\headsep = 0.3 in
\parskip = 0.05 in
\parindent = 0.3 in

\newtheorem{thm}{Theorem}[section]
\newtheorem{lem}[thm]{Lemma}
\newtheorem{cor}[thm]{Corollary}
\newtheorem{prop}[thm]{Proposition}
\theoremstyle{definition}
\newtheorem{defn}[thm]{Definition}
\newtheorem{exa}[thm]{Example}
\theoremstyle{remark}
\newtheorem{rem}[thm]{Remark}

\numberwithin{equation}{section}

\newcounter{Inc}

\linespread{1.0}
\pagenumbering{arabic}

\begin{document}

\setcounter{tocdepth}{3}
\contentsmargin{2.55em} 
\dottedcontents{section}[3.8em]{}{2.3em}{.4pc} 
\dottedcontents{subsection}[6.1em]{}{3.2em}{.4pc}
\dottedcontents{subsubsection}[8.4em]{}{4.1em}{.4pc}

\makeatletter
\newcommand{\mycontentsbox}{%
{\centerline{NOT FOR PUBLICATION}
\addtolength{\parskip}{-2.3pt}
\tableofcontents}}
\def\enddoc@text{\ifx\@empty\@translators \else\@settranslators\fi
\ifx\@empty\addresses \else\@setaddresses\fi
\newpage\mycontentsbox\newpage\printindex}
\makeatother

\setcounter{page}{1}

\title[Matrix coefficient realization theory]{Matrix coefficient realization theory of noncommutative rational functions}

\author[J. Vol\v{c}i\v{c}]{Jurij Vol\v{c}i\v{c}${}^\star$}
\address{Jurij Vol\v{c}i\v{c}, The University of Auckland, Department of Mathematics}
    \email{jurij.volcic@auckland.ac.nz}
\thanks{${}^\star$Research supported by The University of Auckland Doctoral Scholarship.}

\subjclass[2010]{Primary 16S85, 47A56; Secondary 16R50, 93B20.}
\date{\today}
\keywords{Noncommutative rational function, universal skew field, division ring, realization, generalized series, minimization, rational identity, extended domain, symmetric noncommutative rational function.}

\begin{abstract}
Noncommutative rational functions, i.e., elements of the universal skew field of fractions of a free algebra, can be defined through evaluations of noncommutative rational expressions on tuples of matrices. This interpretation extends their traditionally important role in the theory of division rings and gives rise to their applications in other areas, from free real algebraic geometry to systems and control theory. If a noncommutative rational function is regular at the origin, it can be described by a linear object, called a \emph{realization}. In this article we present an extension of the realization theory that is applicable to \emph{arbitrary} noncommutative rationas functions and is well-adapted for studying matrix evaluations.

Of special interest are the minimal realizations, which compensate the absence of a canonical form for noncommutative rational functions. The non-minimality of a realization is assessed by obstruction modules associated with it; they enable us to devise an efficient method for obtaining minimal realizations. With them we describe the stable extended domain of a noncommutative rational function and define a numerical invariant that measures its complexity. Using these results we determine concrete size bounds for rational identity testing, construct minimal symmetric realizations and prove an effective local-global principle for linear dependence of noncommutative rational functions.
\end{abstract}

\maketitle



\section{Introduction}

As the universal skew field of fractions of a free algebra, noncommutative rational functions naturally play a prominent role in the theory of division rings and in noncommutative algebra in general \cite{Ami, Co, Le, Li}. On the other hand, they also arise in other areas, such as free real algebraic geometry \cite{HMV, OHMP, HKM}, algebraic combinatorics \cite{GGRW, GKLLRT}, systems theory \cite{BGM, BGM1, AM}, automata theory \cite{Sch, Fl, BR} and free analysis \cite{KVV2, KVV3}.

One of the main difficulties about working with noncommutative rational functions is that they lack a canonical form. For noncommutative rational functions analytic at 0 this can be resolved by introducing \emph{linear representations} or \emph{realizations}, as they are called in automata theory and control theory, respectively: if $\rr$ is a noncommutative rational function in $g$ arguments with coefficients in a field $\F$ and defined at $(0,\dots,0)$, then there exist $n\in\N$, $\cc\in \F^{1\times n}$, $\bb\in \F^{n\times 1}$, and $A_j\in\F^{n\times n}$ for 
$1\le j\le g$, such that
$$\rr(z_1,\dots,z_g)=\cc\left(I_n-\sum_{j=1}^g A_jz_j\right)^{-1}\bb.$$
In general, $\rr$ admits various realizations; however, those with minimal $n$ are similar up to conjugation (see Definition \ref{d:recognizable}) and thus unique in some sense. Throughout the paper we address these results as the \emph{classical} representation or realization theory and refer to \cite{BGM, BR} as the main sources. Applications of such realizations also appear outside control and automata theory, for example in free probability \cite{And, BMS}. Of course, this approach leaves out noncommutative rational functions that are not defined at any scalar point, e.g. $(z_1z_2-z_2z_1)^{-1}$. 

One way of adapting to the general case is to consider realizations of noncommutative rational functions over an infinite-dimensional division ring as in \cite[Section 7.6]{Co1} or \cite{CR}. The latter paper considers generalized series over an infinite-dimensional division ring and corresponding realization theory. However, in the aforementioned setting of systems theory, free (real) algebraic geometry and free analysis, noncommutative rational functions are applied to tuples of matrices or operators on finite-dimensional spaces. Hence the need for a representation of noncommutative rational functions that is adapted to the matrix setting.

The aim of this paper is to develop a comprehensive realization theory that is applicable to arbitrary noncommutative rational functions and is based on matrix-valued evaluations. The basic idea is to expand a noncommutative rational function in a generalized power series about some matrix point in its domain. Analogously to the classical setting, we consider linear representations of rational generalized series; we reserve the term ``realization'' for a representation corresponding to an expansion as above. If a noncommutative rational function $\rr$ is defined at $\ulp=(p_1,\dots,p_g)\in M_m(\F)^g$, then there exist $n\in\N$, 
$\cc\in M_m(\F)^{1\times n}$, $\bb\in M_m(\F)^{n\times 1}$, and 
$C_{jk},B_{jk}\in M_m(\F)^{n\times n}$ (finite number of them) such that
\begin{equation}\label{e:sylvester}
\rr(z_1,\dots,z_g)=\cc\left(I_n-\sum_{j=1}^g\sum_k C_{jk}(z_j-p_j)B_{jk}\right)^{-1}\bb
\end{equation}
holds under the appropriate interpretation (see Subsection \ref{subs:def} for more precise statement). Such a description is called a \emph{Sylvester realization} of $\rr$ about $\ulp$ of dimension $n$. Of particular interest are the Sylvester realizations of $\rr$ with the smallest possible dimension.

\subsection{Main results and reader's guide}

In Section \ref{sec2} we introduce noncommutative rational functions as equivalence classes of noncommutative rational expressions with respect to their evaluations on tuples of matrices.

In Section \ref{sec3} we define generalized series and linear representations. By Theorem \ref{t:equiv}, rational generalized series are precisely those that admit a linear representation; a concrete construction is given in Subsection \ref{subs:arith}. Evaluations of rational generalized series over a matrix ring are discussed in Subsection \ref{subs:eval}.

Section \ref{sec4} is concerned with the properties of reduced, minimal, and totally reduced representations. Proposition \ref{p:red} gives us an algorithm to produce reduced representations, while Theorem \ref{t:redmin} implies that they are not far away from being minimal. Subsection \ref{subs:hankel} offers a module theoretic replacement for the classical Hankel matrix.

Our main results regarding noncommutative rational functions are given in Section \ref{sec5}. After defining the realization of a noncommutative rational function about a matrix point in its domain, we prove in Theorem \ref{t:totred} that we obtain totally reduced realizations about ``almost every'' point using the reduction algorithm from Section \ref{sec4}. By Theorem \ref{t:indepmin}, the dimensions of minimal realizations are independent of the choice of the expansion point. Thus we can define the \emph{Sylvester degree} of a noncommutative rational function $\rr$ as the dimension of its minimal realization, which in some sense measures $\rr$'s complexity. Furthermore, Corollary \ref{c:dom} describes the stable extended domain of a noncommutative rational function using its totally reduced realization.

Applications of the derived theory are given in Section \ref{sec6}. In Subsection \ref{subs:RI} we address rational identities and provide explicit size bounds for the rational identity testing problem that can be stated without the notion of realization. Next we obtain an algebraic proof of the local-global principle for linear dependence of noncommutative rational functions \cite{CHSY}. In contrast with previous proofs, we obtain concrete size bounds for testing linear dependence; see Theorem \ref{t:lg}. Lastly, Subsection \ref{subs:sym} is devoted to symmetric realizations whose existence and properties are gathered in Theorem \ref{t:sym}.

\subsection{Differences with the classical theory}

The cornerstone of this paper are recognizable generalized series and their linear representations, which can also be of independent interest. It is thus understandable that there is some resemblance with the classical theory. However, there are several obstacles arising from working with matrix rings instead of fields which do not appear in the classical theory \cite{BR,BGM}.

The notions and results from Section \ref{sec3} on noncommutative generalized series over an arbitrary algebra are a natural generalization of the corresponding results for noncommutative power series over a field as presented e.g. in \cite[Chapter 1]{BR}. The biggest change required is in the definitions of shift operators and stable modules since the variables and coefficients do not commute in our setting.

The process of minimization, which is treated in Section \ref{sec4}, demands much more care when dealing with the coefficients belonging to a matrix ring. We have to operate with modules, not vector spaces as in the classical case. While the latter always have bases, this is not true for modules over matrix rings, so instead we have to work with minimal generating sets and maximal linearly independent sets. This has serious consequences. For example, reduced representations (also called \emph{controllable and observable} realizations \cite[Sections 5 and 6]{BGM} in control theory) and minimal representations coincide in the classical theory, whence we have a method of producing minimal representations. In our setting, controllability and observability are replaced by conditions on \emph{obstruction modules}: a realization is reduced if its obstruction modules are torsion modules, and is totally reduced if its obstruction modules are trivial; see Definition \ref{d:red}. Totally reduced representations are minimal, and the latter are reduced. However, in general these families of linear representations are distinct. Furthermore, while we can use an efficient algorithm to produce reduced representation, this in general does not hold for minimal ones. On the other hand, all totally reduced representations are similar and this fails for arbitrary minimal representations. We refer to Subsection \ref{subs:rmtr} for precise statements and examples.

\subsection*{Acknowledgment}

The author would like to thank his supervisor Igor Klep for many long discussions and his fruitful suggestions.



\section{Preliminaries}\label{sec2}

We begin by establishing the general notation and terminology regarding noncommutative rational functions. We define them through the evaluations of noncommutative rational expressions on matrices, 
following \cite[Section 2]{KVV2} (also cf. \cite[Appendix A]{HMV}), where a more detailed analysis can be found. For other classical approaches, see \cite{Row,Co}; alternative constructions of the skew field of noncommutative rational functions employ a free group \cite{Le} or a free Lie algebra \cite{Li}.

Throughout the paper let $\F$ be a field of characteristic 0. This assumption is superfluous to some extent; the reader can observe that the results of Section \ref{sec4} hold for any field $\F$, and that most of the results of Section \ref{sec4} hold even if $\F$ is only a unital commutative ring.

A finite set $\ulz=\{z_1,\dots, z_g\}$ is called an alphabet with letters $z_j$. Let $\pz$ be the free monoid generated by $\ulz$. An element $w\in\pz$ is called a \emph{word} over $\ulz$ and $|w|\in \N\cup\{0\}$ denotes its length. Let $\F\pz$ be the free associative $\F$-algebra generated by $\ulz$; its elements are \emph{(nc) polynomials}. Later on, we also consider alphabets with letters $x_j$ or $y_j$. A syntactically valid combination of nc polynomials with coefficients in $\F$, arithmetic operations $+,\ \cdot,\ {}^{-1}$ and parentheses $(,)$ is called a 
\emph{(nc) rational expression}. The set of all rational expressions is 
denoted $\re$. For example, 
$(1+z_3^{-1}z_2)^{-1}+1$, $z_1+(-z_1)$ and $0^{-1}$ are elements of $\re$.

Let $m\in\N$. Every polynomial $p\in\F\pz$ can be naturally evaluated at a point $\ula=(a_1,\dots,a_g)\in M_m(\F)^g$ by replacing $z_j$ with $a_j$ and $1$ with $I_m$; the result is $p(\ula)\in M_m(\F)$. Moreover, we can naturally extend the evaluations of polynomials to the evaluations of rational expressions. Given $r\in\re$, then $r(\ula)$ is defined in the obvious way if all inverses appearing in $r$ exist at $\ula$. The set of all such $\ula\in M_m(\F)^g$ is denoted 
$\dom_m r$ and called the \emph{domain} of $r$ over $M_m(\F)$. Observe that $\dom_m r$ is an open set in the Zariski topology on $M_m(\F)^g$. If $\dom_m r\neq\emptyset$ for some $m\in\N$, then $\dom_{km} r\neq\emptyset$ for all $k\in\N$.

To acquire the notion of a generic evaluation, we require few concepts from the theory of polynomial identities for matrix rings. Let
$$\F[\ult]=\F\left[t_{ij}^{(k)}\colon 1\le i,j\le m,\, 1\le k\le g\right]$$
be the ring of polynomials in $gm^2$ commutative indeterminates and let $\F(\ult)$ be its field of fractions. The distinguished matrices
$$T_k=\left(t_{ij}^{(k)}\right)_{ij}\in M_{m}\left(\F[\ult]\right)$$
are called the \emph{generic $m\times m$ matrices} and the unital $k$-subalgebra in $M_n(\F[\ult])$ generated by $T_1,\dots, T_g$ is called the \emph{ring of generic matrices of size $m$} \cite{Pro, Fo}. Its central closure in $M_n(\F(\ult))$ is a division algebra, denoted $\ud$ and called the \emph{generic division algebra of degree $m$}; we refer to \cite[Section 3.2]{Row} or \cite[Chapter 14]{Sa} for a good exposition.

Let $r$ be a nc rational expression; as before, we can attempt to evaluate it on the tuple $\ulT=(T_1,\dots,T_g)$. If $r(\ulT)\in M_{m}\left(\F(\ult)\right)$ exists, then obviously $r(\ulT)\in \ud$; we say that this matrix is the \emph{generic evaluation of $r$ of size $m$}. The intersection of the domains of the entries in $r(\ulT)$ is the \emph{extended domain} of $r$ over $M_m(\F)$ and is denoted $\edom_m r$. Let
$$\dom r=\bigcup_{m\in\N}\dom_m r,\quad \edom r=\bigcup_{m\in\N}\edom_m r.$$
While $\dom r$ is closed under direct sums, that is,
$$\ula',\ula''\in\dom r \ \Rightarrow \ula'\oplus\ula''=\begin{pmatrix}\ula'& 0 \\ 0 & \ula''\end{pmatrix} \in\dom r,$$
this does not hold for $\edom r$. Hence it is more convenient to work with the \emph{stable extended domain} of $r$
$$\edomst r=\bigcup_{m\in\N}\edomstn{m}r,$$
where
$$\edomstn{m}r=\left\{\ula\in \edom_mr\colon
\overbrace{\ula\oplus\dots\oplus\ula}^k\in\edom_{km}r \text{ for all } k\in\N\right\}.$$
By \cite[Proposition 3.3]{Vol}, every stable domain is closed under direct sums. Clearly we have $\dom r\subseteq \edomst r\subseteq \edom r$ for every $r\in\re$.

A rational expression is \emph{degenerate} if $\dom r = \emptyset$ and \emph{nondegenerate} otherwise. On the set of nondegenerate rational expressions we define a relation $r_1\sim r_2$ if and only if 
$r_1(\ula)=r_2(\ula)$ for all $\ula\in\dom r_1\cap \dom r_2$. Since $\F$ is infinite, previous remarks imply that $\sim$ is an equivalence relation. If $r$ is a nondegenerate rational expression and $r\sim 0$, then we say that $r$ is a \emph{rational identity}. Observe that since $\ud$ is a division ring, $r\in \re$ is degenerate or a rational identity if and only if $r^{-1}$ is degenerate.

Finally, we define \emph{(nc) rational functions} as the equivalence classes of nondegenerate rational expressions. By \cite[Proposition 2.1]{KVV2}, they form a division ring, denoted $\rf$. The latter plays an important role in the theory of division rings since it is the universal skew field of the free algebra $\F\pz$ by \cite[Proposition 2.2]{KVV2}; we refer to \cite[Chapter 4]{Co}) for details about noncommutative localizations. For every $\rr\in \rf$ let $\dom_m \rr$ be the union of $\dom_m r$ over all representatives $r\in\re$ of $\rr$. In a similar fashion we define $\edomstn{m} \rr$, $\dom \rr$ and $\edomst \rr$.



\section{Linear representations}\label{sec3}

In this section we introduce generalized series and linear representations. The main result is Theorem \ref{t:equiv}, which shows that the classes of rational and recognizable series coincide. In Subsection \ref{subs:eval} we discuss evaluations of rational series.


\subsection{Generalized series}

We start with basic and quite general concepts that are required throughout the paper. Let $\cA$ be a central unital $\F$-algebra and let $\ulx=\{x_1,\dots, x_g\}$ be a finite alphabet, i.e., a set of freely noncommuting letters.

\begin{defn}
The free product of unital $\F$-algebras $\agp = \cA *_{\F} \F\gp$ is called \emph{the algebra of generalized polynomials} over $\cA$. \emph{The algebra of generalized (power) series} over $\cA$, denoted by $\ags$, is defined as the completion of $\agp$ with respect to the $(\ulx)$-adic topology. Comparing with the notation in \cite{Co, Co1} we have $\agp=\cA_{\F}\gp$ and $\ags=\cA_{\F}\gs$.
\end{defn}

If $S\in \ags$ is written as
\begin{equation}\label{e:exp}
S=\sum_{w\in \gp} \sum_{i=1}^{n_w}a_{w,i}^{(0)}w_1a_{w,i}^{(1)}w_2\cdots w_{|w|}a_{w,i}^{(|w|)},
\end{equation}
then let
$$[S,w]=\sum_i a_{w,i}^{(0)}w_1a_{w,i}^{(1)}w_2\cdots w_{|w|}a_{w,i}^{(|w|)}.$$
This is a well-defined element of $\agp$ even though the expansion \eqref{e:exp} is not uniquely determined. The \emph{support} of $S$ is $\supp S=\{w: [S,w]\neq0\}$. A series is a polynomial if and only if its support is finite. It is easy to check that a series $S$ is invertible in $\ags$ if and only if the constant term 
$[S,1]$ is invertible in $\cA$. In particular, if $[T,1]=0$ for $T\in\ags$, then
$$(1-T)^{-1}=\sum_{k\ge0}T^k.$$

\begin{defn}\label{d:rational}
\emph{Rational operations} in $\ags$ are the sum, the product and the inverse. A series is \emph{rational} if it belongs to the rational closure of $\agp$ in 
$\ags$.
\end{defn}

For $x\in \ulx$ let $\cA^x$ denote the $\cA$-bimodule generated by $x$, i.e., the set of homogeneous linear generalized polynomials in $x$ over $\cA$. More generally, let $\cA^w= \cA^{w_1}\cdots \cA^{w_{|w|}}$ for $w\in \gp$ (here we set $\cA^1=\cA$).

\begin{rem}\label{r:bimod}
For future reference we note that $\cA^w$ and $\cA^{\otimes |w|}$ are isomorphic as $\cA$-bimodules; in particular, as a 
$\cA$-bimodule, $\cA^w$ does not depend on the letters in $w$, but only on the length of $w$.
\end{rem}

\begin{defn}\label{d:recognizable}
A series $S$ is \emph{recognizable} if for some $n\in\N$ there exist $\cc\in \cA^{1\times n}$, $\bb\in \cA^{n\times 1}$ and 
$A^x\in (\cA^x)^{n\times n}$ for $x\in \ulx$ such that $[S,w]=\cc A^w\bb$ for all $w\in \gp$, where the notation
$$A^w = A^{w_1}\cdots A^{w_{|w|}} \in\cA^w$$
is used. In this case $(\cc,A,\bb)$ is called a \emph{(linear) representation of dimension $n$} of $S$.

Two representations $(\cc_1,A_1,\bb_1)$ and $(\cc_2,A_2,\bb_2)$ of dimension $n$ are \emph{similar} if there 
exists an invertible matrix $P\in\cA^{n\times n}$ (called a \emph{transition matrix}) such that
$$\cc_2=\cc_1P^{-1},\quad A_2=PA_1P^{-1},\quad \bb_2=P\bb_1.$$

\end{defn}

The only series with a zero-dimensional representation is the zero series $0$.


\subsection{Rationality equals recognizability}

In this subsection we prove that rational and recognizable series coincide using the notion of a stable module.

Let $L:\gp\to \End_{\cA-\cA}(\ags)$ be the map defined by
\begin{equation}\label{e:opL}
L_vS= \sum_{w\in \gp}[S,vw].
\end{equation}
We observe that $L_{uu'} S=L_{uu'}(L_u S)$ and $L_{uu'} (TS)=TL_{u'}S$ for $u,u'\in \gp$ and $T\in \cA^u$. It is also easy to check that the map $L_x$ for $x\in\ulx$ has derivative-like properties
\begin{equation}\label{e:L}
L_x(ST)=(L_xS)T+[S,1]L_xT,\quad L_xU^{-1}=-[U,1]^{-1}(L_xU)U^{-1}
\end{equation}
for $S,T,U\in \ags$ and $U$ invertible.

\begin{defn}
A left $\cA$-submodule $\cM\subseteq \ags$ is \emph{stable} if 
$L_x S\in \cA^x \cM$ for every $x\in \ulx$ and $S\in \cM$.
\end{defn}

\begin{prop}\label{p:stable}
A series $S\in \ags$ is recognizable if and only if it is contained in a stable finitely generated left $\cA$-submodule of $\ags$.
\end{prop}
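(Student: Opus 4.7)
The plan is to prove both implications by reading off the data of the representation directly from a suitable set of generators of a stable module, and vice versa. The forward direction is essentially a check that the ``coordinate'' series associated to a representation generate a stable module, while the backward direction extracts a representation from an arbitrary finite generating set of a stable module using the shift action and the constant terms of the generators.

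For the $(\Rightarrow)$ direction, suppose $S$ has a representation $(\cc,A,\bb)$ of dimension $n$. Define $n$ series $S_1,\dots,S_n \in \ags$ by declaring $[S_i,w] = (A^w\bb)_i \in \cA^w$ for every $w \in \gp$; this is consistent since $A^w_{ij} \in \cA^w$ and $\bb_j \in \cA$. Let $\cM$ be the left $\cA$-submodule of $\ags$ generated by $S_1,\dots,S_n$. Then $S = \sum_i c_i S_i \in \cM$, because $[S,w] = \cc A^w \bb = \sum_i c_i [S_i,w]$ for every $w$. For stability, a direct computation yields $[L_x S_i, w] = [S_i, xw] = (A^x A^w \bb)_i = \sum_j A^x_{ij}[S_j, w]$, so $L_x S_i = \sum_j A^x_{ij} S_j \in \cA^x \cM$; since $L_x$ is a $\cA$-bimodule endomorphism, this extends to $L_x \cM \subseteq \cA^x \cM$.

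For the $(\Leftarrow)$ direction, fix generators $T_1,\dots,T_n$ of $\cM$, write $S = \sum_i b_i T_i$ with $b_i \in \cA$, and for every $x \in \ulx$ use stability to pick $A^x_{ij} \in \cA^x$ satisfying $L_x T_i = \sum_j A^x_{ij} T_j$. Set $\cc = (b_1,\dots,b_n)$, $\bb_i = [T_i,1]$, and $A^x = (A^x_{ij})_{ij}$. The computational core is the identity $[T_i,w] = (A^w\bb)_i$, which I would prove by induction on $|w|$: the case $w=1$ is immediate, and for $w=xw'$ one writes $[T_i, xw'] = [L_x T_i, w'] = \sum_j A^x_{ij}[T_j, w']$ and applies the induction hypothesis to replace $[T_j,w']$ by $(A^{w'}\bb)_j$, obtaining $(A^x A^{w'}\bb)_i = (A^{xw'}\bb)_i$. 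Weighting by $b_i$ then gives $[S,w] = \sum_i b_i[T_i,w] = \cc A^w\bb$, which is the desired representation.

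The only real obstacle is careful bookkeeping of the $\cA$-bimodule structure: because $\cA$ is not assumed commutative, one must keep $\cc$ on the left and $\bb$ on the right of $A^w$, and interpret the products $A^x_{ij}S_j$ and $A^x_{ij}[T_j,w']$ inside $\cA^{xw} \subseteq \cA^x\cdot\ags$ rather than in $\cA$. Once this is done consistently, the two constructions are inverse to each other at the level of coefficients and no further complications arise.
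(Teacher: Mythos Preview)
Your argument is correct and follows essentially the same route as the paper in both directions. One small caveat: with this paper's definition $L_vS=\sum_w[S,vw]$ the map $L_x$ projects onto words beginning with $x$ rather than shifting, so the intermediate identity should read $[L_xT_i,xw']=[T_i,xw']$ (not $[L_xT_i,w']$, which lives in the wrong bimodule $\cA^{w'}$); comparing $xw'$-components of $L_xT_i=\sum_jA^x_{ij}T_j$ then gives exactly the inductive step you need, and the same remark applies to your forward direction.
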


\begin{proof}
($\Rightarrow$) Let $S$ be a recognizable series and $(\cc,A,\bb)$ its linear representation. For $i=1,\dots,n$ define $S_i\in\ags$ by $[S_i,w]=(A^w\bb)_i$ and let $\cM$ be the left $\cA$-submodule generated by the $S_i$. Then obviously $S\in \cM$. Since
$$[L_xS_i,xw]=[S_i,xw]=(A^{xw}\bb)_i=(A^xA^w\bb)_i=\sum_jA^x_{ij}[S_j,w]$$
for every $w$, we have
$$L_xS_i=\sum_jA^x_{i,j}S_j.$$
Therefore $\cM$ is stable.

($\Leftarrow$) Let $\cM$ be a stable $\cA$-submodule generated by finitely many elements $S_i$, and containing $S$. Then
$$S=\sum b_i S_i$$
for some $b_i\in \cA$. Furthermore, for every $x\in \ulx$ there exists $A^x\in (\cA^x)^{n\times n}$ such that
$$L_xS_i=\sum_j \cA^x_{ij}S_j.$$
By induction, it is easily proved that
$$L_vS_i=\sum_j A^v_{ij}S_j$$
holds for every $v$. Lastly, set $c_i=[S_i,1]$. Then
$$[S_i,w]=[L_wS_i,w]=\sum_j A^w_{ij}[S_j,1]=(A^w\bb)_i$$
and so $[S,w]=\cc A^w \bb$.
\end{proof}

A this point, Proposition \ref{p:stable} is just an intermediate step towards Theorem \ref{t:equiv}; however, we shall return to it again in Subsection \ref{subs:hankel}.

\begin{lem}\label{l:mat}
Let $M\in\left( \ags\right)^{n\times n}$ and suppose that the constant terms of all its entries are 0. Then $I-M$ is invertible and the entries of  $(I-M)^{-1}$ lie in the rational closure of $\F$ and the entries of $M$.
\end{lem}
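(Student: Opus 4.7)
The plan is to induct on $n$ and apply the Schur complement to peel off one row and column at a time. The base case $n=1$ is immediate: $M=(m)$ with $[m,1]=0$, so the geometric series $(1-m)^{-1}=\sum_{k\ge0}m^{k}$ converges in $\ags$, giving invertibility, and $(1-m)^{-1}$ lies in the rational closure of $\F$ and $m$ by definition of ``rational closure''.

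For the inductive step with $n\ge2$, I would partition
\[
I_{n}-M=\begin{pmatrix} 1-m & -u\\ -v & I_{n-1}-M'\end{pmatrix},
\]
with $m\in\ags$ a scalar entry, $u\in(\ags)^{1\times(n-1)}$, $v\in(\ags)^{(n-1)\times1}$, and $M'\in(\ags)^{(n-1)\times(n-1)}$, all of whose entries have zero constant term. The base case yields invertibility of $1-m$ with $(1-m)^{-1}$ in the rational closure. Form the Schur complement
\[
S=(I_{n-1}-M')-v(1-m)^{-1}u=I_{n-1}-\bigl(M'+v(1-m)^{-1}u\bigr).
\]
The key observation is that $M'+v(1-m)^{-1}u$ still has zero constant terms: the series with zero constant term form a two-sided ideal of $\ags$, and the entries of $u$ and $v$ lie in this ideal, so $v(1-m)^{-1}u$ does as well. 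By the induction hypothesis applied to $M'+v(1-m)^{-1}u$, the matrix $S$ is invertible and the entries of $S^{-1}$ lie in the rational closure of $\F$ together with the entries of $M$ (since the entries of $v(1-m)^{-1}u$ are themselves rational in those of $M$).

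I would conclude with the universal block-inversion identity
\[
(I_{n}-M)^{-1}=\begin{pmatrix}(1-m)^{-1}+(1-m)^{-1}u\,S^{-1}\,v(1-m)^{-1} & (1-m)^{-1}u\,S^{-1}\\ S^{-1}\,v(1-m)^{-1} & S^{-1}\end{pmatrix},
\]
which holds in any unital ring whenever the pivot and Schur complement are invertible. This expresses every entry of $(I_{n}-M)^{-1}$ as a rational expression in $\F$ and the entries of $M$. I anticipate no genuine obstacle: the block-inversion identity is a purely formal ring identity verified by direct multiplication, so noncommutativity of $\ags$ causes no trouble, and the only real check — that the ``zero constant term'' property is preserved under the Schur reduction — is immediate from the ideal property above.
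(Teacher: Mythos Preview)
Your proof is correct and follows essentially the same approach as the paper's: induction on $n$, with the inductive step handled via a block decomposition and Schur complement, checking that the ``zero constant term'' condition persists so the induction hypothesis applies to the complement. The only cosmetic difference is that you peel off a $1\times1$ corner while the paper uses a general square block partition and a block-inverse formula involving both Schur complements; your version is arguably cleaner since a single Schur complement suffices.
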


\begin{proof}
We prove the statement by induction on $n$. The the case $n=1$ holds by characterization of invertible elements in $\ags$. Assume the lemma holds for all matrices of this kind and whose size is less than $n$. We decompose $M$ into four blocks
$$M=\begin{pmatrix}A&B\\C&D\end{pmatrix}$$
with square matrices $A$ and $D$. By induction hypothesis $I-A$ and $I-D$ are invertible. Moreover, the entries of matrices
$$A+B(I-D)^{-1}C,\quad D+C(I-A)^{-1}B$$
also have zero constant terms, so 
$$(I-A)-B(I-D)^{-1}C,\quad (I-D)-C(I-A)^{-1}B$$
are also invertible. By a known result about the inverse of a block matrix, see e.g. \cite[Subsection 0.7.3]{HJ}, $I-M$ is invertible and  its inverse equals
$$\begin{pmatrix}
((I-A)-B(I-D)^{-1}C)^{-1}& (I-A)^{-1}B(C(I-A)^{-1}B-(I-D))^{-1} \\
(C(I-A)^{-1}B-(I-D))^{-1}CA^{-1} & ((I-D)-C(I-A)^{-1}B)^{-1}
\end{pmatrix}.$$
Now it is also clear that the entries of this matrix lie in the rational closure of $\F$ and the entries of $M$.
\end{proof}

\begin{thm}\label{t:equiv}
A series is rational if and only if it is recognizable.
\end{thm}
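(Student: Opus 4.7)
The plan is to prove both implications separately, using Proposition~\ref{p:stable} as a convenient criterion for recognizability in the harder direction.

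For recognizable $\Rightarrow$ rational, I would take a representation $(\cc,A,\bb)$ of $S$ and form the matrix $A := \sum_{x\in\ulx} A^x \in (\agp)^{n\times n}$. Its entries lie in $\sum_x \cA^x$ and so have vanishing constant term, so Lemma~\ref{l:mat} yields an invertible $I-A$ whose inverse has entries in the rational closure of $\F$ together with the entries of $A$. The geometric expansion $(I-A)^{-1} = \sum_{k\ge 0} A^k$, when grouped by word length, becomes $\sum_{w\in\gp} A^w$, so
$$\cc (I-A)^{-1}\bb = \sum_{w\in\gp} \cc A^w\bb = \sum_{w\in\gp}[S,w] = S,$$
exhibiting $S$ as a rational expression in elements of $\agp$.

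For the converse, Proposition~\ref{p:stable} reduces the task to showing that the family of series lying in some finitely generated stable left $\cA$-submodule of $\ags$ contains $\agp$ and is closed under the rational operations. The easy cases: each constant $a\in\cA$ lies in the stable module $\cA$, each letter $x$ lies in the stable module $\cA+\cA x$, and sums are handled by $\cM_1+\cM_2$. For products with $S_i\in\cM_i$ finitely generated and stable, generated by $\{T_i\}$ and $\{U_j\}$ respectively, the candidate $\cN := \sum_i \cA (T_i S_2) + \cM_2$ is finitely generated, it contains $S_1 S_2 = \sum_i b_i T_i S_2$, and I would verify stability by expanding $L_x(TS_2) = (L_xT)S_2 + [T,1]L_xS_2$ with \eqref{e:L} and invoking the identity $\cA\cdot\cA^x = \cA^x$ to absorb the scalar $[T,1]$ into an $\cA^x$-prefactor.

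The main obstacle is closure under inversion, where the noncommutativity of $\cA$ with $\ulx$ is less forgiving than in the classical case. Given $S\in\cM$ with $[S,1]\in\cA$ invertible, I would first enlarge $\cM$ to $\cM+\cA$ (which remains finitely generated and stable, since $L_x$ annihilates constants) so that $1\in\cM$, and then set $\cN := \cM\cdot S^{-1} = \sum_i \cA(T_iS^{-1})$. This $\cN$ is finitely generated and contains $S^{-1}=1\cdot S^{-1}$; stability is the crux, and I would verify it by combining
$$L_x(TS^{-1}) = (L_xT)S^{-1} + [T,1]L_xS^{-1},\qquad L_xS^{-1} = -[S,1]^{-1}(L_xS)S^{-1}$$
from \eqref{e:L} and then repeatedly using $\cA\cdot\cA^x=\cA^x$ to keep the scalar factors $[T,1]$ and $[S,1]^{-1}$ safely absorbed into the $\cA^x$-prefactors witnessing $L_x\cN\subseteq \cA^x\cN$.
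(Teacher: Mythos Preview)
Your proposal is correct and follows essentially the same route as the paper: the recognizable $\Rightarrow$ rational direction is identical (form $M=\sum_x A^x$, invoke Lemma~\ref{l:mat}, expand geometrically), and for rational $\Rightarrow$ recognizable the paper likewise uses Proposition~\ref{p:stable} and builds the stable modules $\cM_1+\cM_2$, $\cM_1 S_2+\cM_2$, and $\cA S^{-1}+\cM S^{-1}$ (your enlargement of $\cM$ by $\cA$ before passing to $\cM S^{-1}$ is just a repackaging of the paper's $\cA S^{-1}+\cM S^{-1}$). The only cosmetic difference is that the paper handles an arbitrary generalized polynomial in one shot by exhibiting an explicit finite stable set built from its monomials, whereas you cover polynomials implicitly via constants, letters, and closure under sum and product; both are fine.
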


\begin{proof}
($\Rightarrow$) For $P\in \agp$ and $w\in\supp P$ choose $a_{w,i}^{(j)}\in \cA$ such that
$$[P,w]=\sum_{i=1}^{n_w} a_{w,i}^{(0)}w_1a_{w,i}^{(1)}w_2\cdots w_{|w|}a_{w,i}^{(|w|)}.$$
Then
$$\{w_ja_{w,i}^{(j+1)}w_{j+1}\cdots w_{|w|}a_{w,i}^{(|w|)} \colon w\in\supp P,\, 1\le i\le n_w,\, j\ge1\}$$
is a stable finite set. Let $\cM$ be the left $\cA$-module generated by it; then $\cM$ is stable and $P\in \cM$. Thus $P$ is recognizable by Proposition \ref{p:stable}.

Let $S$ and $T$ be recognizable series; by Proposition \ref{p:stable} they belong to some stable finitely generated left $\cA$-modules $\cM$ and $\cN$. Left $\cA$-module $\cM+\cN$ is obviously stable and finitely generated. Also the left $\cA$-module $\cM T+\cN$ is finitely generated and stable by \eqref{e:L}. Since 
$S+T\in \cM+\cN$ and $ST\in \cM T+\cN$, the series $S+T$ and $ST$ are recognizable by Proposition \ref{p:stable}.

Assume $S$ is invertible and let $\cM$ be as in previous paragraph. The left 
$\cA$-module $\cA S^{-1} +\cM S^{-1}$ is finitely generated and stable, because 
$L_xS^{-1}=-[S,1]^{-1}(L_xS)S^{-1}$ and
$$L_x(S'S^{-1})=(L_xS'-[S',1][S,1]^{-1}L_xS)S^{-1}$$
for $S'\in \cM$ by \eqref{e:L}. Also $S^{-1}\in \cA S^{-1}+\cM S^{-1}$, so the series $S^{-1}$ is recognizable by Proposition \ref{p:stable}.

Therefore any rational series is recognizable by Definition \ref{d:rational}. 

($\Leftarrow$) Let $S$ be a recognizable series with representation $(\cc,A,\bb)$ of dimension $n$ and consider
$$M=\sum_{x\in \ulx}A^x\in \left(\agp\right)^{n\times n}.$$
This matrix satisfies the assumptions of Lemma \ref{l:mat}, so $I-M$ is invertible and its entries are rational series. Therefore
$$S=\sum_w \cc A^w\bb=\cc\left(\sum_wA^w\right)\bb=\cc\left(\sum_{k\ge0}M^k\right)\bb=\cc\left(I-M\right)^{-1}\bb$$
is a rational series.
\end{proof}

\begin{rem}\label{r:class}
If $\cA=\F$, then our terminology and results compare with classical representation theory of series as presented in \cite[Chapter 1]{BR}. For example, our notation and the notation in \cite[Chapter 1]{BR} are related via $[S,w]=(S,w)w$, and the role of $L_v$ is taken by $v^{-1}S:=\sum_{w}(S,vw)w$. Proposition \ref{p:stable} and Theorem \ref{t:equiv} and their proofs are natural analogs of \cite[Proposition 1.5.1 and Theorem 1.7.1]{BR}.
\end{rem}


\subsection{Arithmetics of representations}\label{subs:arith}

As already seen in the previous subsection, every recognizable series is rational by Theorem \ref{t:equiv}. In this subsection we start with some observations about elementary polynomials and then present concrete representations corresponding to the rational operations in Theorem \ref{t:arithmetic} following the ideas in \cite[Section 4]{BGM}. 

The constant $1$ has the representation $(1,0,1)$ of dimension 1. Let $a\in \cA$; then $x_i+a$ has the representation
\begin{equation}\label{a1}
\left(
\begin{pmatrix}1& a\end{pmatrix},
\begin{pmatrix}0&\delta_{ij}x_j\\ 0&0\end{pmatrix},
\begin{pmatrix}0\\ 1\end{pmatrix}
\right)
\end{equation}
of dimension 2, where $\delta_{ij}$ is the Kronecker's delta. If $S$ has the representation $(\cc,A,\bb)$, then $aS$ and $Sa$ have representations $(a\cc,A,\bb)$ 
and $(\cc,A,\bb a)$, respectively.

\begin{thm}\label{t:arithmetic}
For $i\in\{1,2\}$ let $S_i$ be a recognizable series with representation $(\cc_i,A_i,\bb_i)$ of dimension $n_i$ and $S$ be an invertible recognizable series with representation $(\cc,A,\bb)$ of dimension $n$. Then:
\begin{enumerate}[\rm(1)]
\item $S_1+S_2$ is recognizable with a representation
\begin{equation}\label{a2}
\left(
\begin{pmatrix}\cc_1 & \cc_2\end{pmatrix},
\begin{pmatrix}A_1&0\\0&A_2\end{pmatrix},
\begin{pmatrix}\bb_1\\ \bb_2\end{pmatrix}
\right)
\end{equation}
of dimension $n_1+n_2$;

\item $S_1S_2$ is recognizable with a representation
\begin{equation}\label{a3}
\left(
\begin{pmatrix}\cc_1 & \cc_1\bb_1\cc_2\end{pmatrix},
\begin{pmatrix}A_1&A_1\bb_1\cc_2 \\0&A_2\end{pmatrix},
\begin{pmatrix}0\\ \bb_2\end{pmatrix}
\right)
\end{equation}
of dimension $n_1+n_2$; 

\item $S^{-1}$ is recognizable with a representation
\begin{equation}\label{a4}
\left(
\begin{pmatrix}-a^{-1}\cc & a^{-1}\end{pmatrix},
\begin{pmatrix}A(I-\bb a^{-1}\cc)&A\bb a^{-1}\\0&0\end{pmatrix},
\begin{pmatrix}0\\ 1\end{pmatrix}
\right)
\end{equation}
of dimension $n+1$, where $a=[S,1]$.
\end{enumerate}
\end{thm}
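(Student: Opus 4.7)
My plan is to verify each of the three representations directly by checking that $\cc A^w \bb$ gives the correct coefficient $[S,w]$ for every word $w\in\gp$. In each case I first need to confirm that the proposed $A^x$ lies in $(\cA^x)^{n\times n}$ at each letter $x\in\ulx$ (e.g.\ in (2), the block $A_1^x\bb_1\cc_2$ is in $(\cA^x)^{n_1\times n_2}$ since $A_1^x\in(\cA^x)^{n_1\times n_1}$ and $\bb_1,\cc_2$ have entries in $\cA$; similar checks for (1) and (3) using $A\bb a^{-1}$). Then I compute the matrix power $A^w$ for a word $w=w_1\cdots w_k$ by induction on $k$, exploiting the block structure.

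For (1), the block-diagonal form gives $A^w=\mathrm{diag}(A_1^w,A_2^w)$ for every $w$ by a one-line induction, so $\cc A^w\bb=\cc_1A_1^w\bb_1+\cc_2A_2^w\bb_2=[S_1,w]+[S_2,w]$. For (2), the block upper-triangular form yields, by induction on $|w|$,
\[
A^w=\begin{pmatrix}A_1^w & \sum_{\substack{uv=w\\|u|\ge 1}}A_1^u\bb_1\cc_2 A_2^v\\ 0 & A_2^w\end{pmatrix},
\]
and pairing with $\cc=(\cc_1,\ \cc_1\bb_1\cc_2)$ and $\bb=(0,\bb_2)^{T}$ produces
\[
\cc A^w\bb=\sum_{\substack{uv=w\\|u|\ge 1}}[S_1,u][S_2,v]+[S_1,1][S_2,w]=\sum_{uv=w}[S_1,u][S_2,v]=[S_1S_2,w],
\]
which is the Cauchy product formula for coefficients of a product of series.

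For (3), set $E:=I-\bb a^{-1}\cc$. A short induction shows that for $|w|\ge 1$,
\[
(A')^w=\begin{pmatrix}A^{w_1}E\,A^{w_2}E\cdots A^{w_{|w|}}E & A^{w_1}E\,A^{w_2}E\cdots A^{w_{|w|-1}}E\,A^{w_{|w|}}\bb a^{-1}\\ 0 & 0\end{pmatrix},
\]
so $\cc'(A')^w\bb'=-a^{-1}\cc\,A^{w_1}E\cdots E\,A^{w_{|w|}}\bb a^{-1}$. Expanding each occurrence of $E=I-\bb a^{-1}\cc$ and using $\cc A^u\bb=[S,u]$ for every nonempty $u$ transforms this into an alternating sum over all ordered factorizations $w=u_1\cdots u_m$ with $|u_i|\ge 1$:
\[
\cc'(A')^w\bb'=\sum_{m\ge 1}(-1)^m a^{-1}[S,u_1]a^{-1}[S,u_2]a^{-1}\cdots[S,u_m]a^{-1}.
\]
On the other hand, the same recursion is obtained from the identity $S\cdot S^{-1}=1$: writing $S=a(1-T)$ with $T$ having no constant term gives $S^{-1}=a^{-1}\sum_{m\ge 0}T^m a^{-1}\cdot a$ after a short manipulation, and the coefficient of $w$ matches the formula above; equivalently, one can verify the recursion $a[S^{-1},w]=-\sum_{uv=w,\,|u|\ge 1}[S,u][S^{-1},v]$ by induction on $|w|$ with base case $[S^{-1},1]=a^{-1}$. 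The main obstacle is the bookkeeping in this last case, specifically ensuring the signs and the alternation of $E$ factors match the recursive expansion of $[S^{-1},w]$; the cleanest way I would present it is to define $T\in\ags$ by $[T,w]:=\cc'(A')^w\bb'$ and check $ST=1$ directly using the product formula from part (2) applied to the stacked representation, bypassing the explicit alternating sum.
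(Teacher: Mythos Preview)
Your proposal is correct and follows essentially the same approach as the paper: direct verification that $\cc A^w\bb=[\,\cdot\,,w]$ via induction on the block structure, using the Cauchy product formula for (2) and the recursion coming from $SS^{-1}=1$ for (3). The only organizational difference is in part (3): the paper reaches the same closed form $[T,w]=-a^{-1}\cc A^{w_1}(I-\bb a^{-1}\cc)\cdots A^{w_{|w|}}\bb a^{-1}$ and then verifies $[S^{-1},w]=[T,w]$ by a one-step telescoping induction (substituting $[S^{-1},v]=[T,v]$ for shorter $v$ into the recursion and collapsing the sum), whereas you fully expand the $E$-factors into the alternating sum over ordered factorizations and match it against the geometric-series expansion of $(1-T)^{-1}a^{-1}$; both routes are equivalent and your suggested shortcut of checking $ST=1$ directly is also fine.
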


\begin{proof}

(1) This is clear since
$$[S_1+S_2,w]=[S_1,w]+[S_2,w]=\cc_1 A_1^w\bb_1+\cc_2 A_2^w\bb_2=
\begin{pmatrix}\cc_1&\cc_2\end{pmatrix}\cdot
\begin{pmatrix}A_1^w&0\\0&A_2^w\end{pmatrix}\cdot
\begin{pmatrix}\bb_1\\ \bb_2\end{pmatrix}.$$

(2) If $w=w_1\cdots w_\ell$, let $M_j=A_1^{w_j}$, $N_j=A_2^{w_j}$ and $Q=\bb_1\cc_2$. Since
\[ \begin{split}
[S_1S_2,w]
&=\sum_{uv=w}[S_1,u][S_2,w]
=\cc_1\bb_1\cc_2 A_2^w\bb_2+\cc_1\left(\sum_{uv=w,|u|>0} A_1^u \bb_1\cc_2 A_2^v \right)\bb_2 \\
&=(\cc_1 Q)N_1\cdots N_\ell c_2+\cc_1\left(
\sum_{k=1}^\ell M_1\cdots M_k Q N_{N+1}\cdots N_\ell \right)\bb_2,
\end{split} \]
it is enough to prove the equality
$$\prod_{j=1}^\ell \begin{pmatrix}M_j&M_jQ\\0&N_j\end{pmatrix}=
\begin{pmatrix}\prod_{j=1}^\ell M_j&
\sum_{k=1}^\ell \left(\prod_{j=1}^k M_j\right)Q\left(\prod_{j=k+1}^\ell N_j\right)
\\0&\prod_{j=1}^\ell N_j\end{pmatrix}$$
and this can be easily done by induction on $\ell$.

(3) If $w=w_1\cdots w_\ell$, let $M_j=A^{w_j}$ and $Q=\bb a^{-1}\cc$. The statement is proved by induction on $\ell$. It obviously holds for $\ell=0$, so let $\ell\ge 1$. Note that representation \eqref{a4} yields a series $T$ with
$$[T,w]=-a^{-1}\cc A^{w_1}(I-\bb a^{-1}\cc)\cdots A^{w_{\ell-1}}(I-\bb a^{-1}\cc)
A^{w_\ell}\bb a^{-1}.$$
By the inductive step we have
\[ \begin{split}
[S^{-1},w]
&=-\sum_{uv=w,u\neq 1} a^{-1}[S,u][S^{-1},v] \\
&=-\sum_{uv=w,u\neq 1} a^{-1}[S,u][T,v] \\
&=-a^{-1}\cc\left(M_1\cdots M_\ell
-\sum_{i=1}^{l-1} M_1\cdots M_i Q M_{i+1}(I-Q)\cdots M_{\ell-1}(I-Q)M_\ell \right)\bb a^{-1} \\
&=-a^{-1}\cc M_1(I-Q)\cdots M_{\ell-1}(I-Q)M_\ell\bb a^{-1} \\
&=[T,w]
\end{split} \]
and thus the statement holds.
\end{proof}


\subsection{Evaluations of rational series}\label{subs:eval}

In this subsection let $\cA=M_m(\F)$. A combination of rational operations and generalized polynomials that represents an element of $M_m(\F)\gs$ is called a \emph{generalized rational expression over $M_m(\F)$} or shortly a gr expression. Therefore each gr expression is a rational series, but a rational series can be written as a gr expression in different ways. For example, $x^{-1}$ for $x\in \ulx$ is not a gr expression since $x\in M_m(\F)\gs$ is not invertible in $M_m(\F)\gs$; on the other hand, $(1-x)^{-1}$ and $1+(1-x)^{-1}-1$ are different gr expressions but yield the same rational series, namely $\sum_{k\ge 0}x^k$.

Let $s\in\N$ and consider the inclusion
\begin{equation}\label{e:funinc}
\iota:M_m(\F)\to M_{ms}(\F),\quad a\mapsto I_s\otimes a,
\end{equation}
where $\otimes$ is the Kronecker product. We can now evaluate generalized polynomials over $M_m(\F)$ in $M_{ms}(\F)$ using $\iota$ in a natural way. Of course, one could choose a different inclusion $M_m(\F)\to M_{ms}(\F)$ to define these evaluations. However, all such inclusions are conjugate by the Skolem-Noether theorem \cite[Theorem 3.1.2]{Row} and the corresponding evaluations thus exhibit the same behavior. Therefore we establish the convention to always use $\iota$.

Furthermore, we can inductively define the domain $\dom_{ms} S\subseteq M_{ms}(\F)^g$ and the evaluation $S(\ulq)$ at $\ulq\in\dom_{ms}S$ for any gr expression $S$, similarly as in Section \ref{sec2}. By the definition, every gr expression $S$ is defined in 0 and $S(0)=[S,1]$. We also define
$$\dom S=\cup_{s\in\N} \dom_{ms}S.$$

Since
$$\sum_w\cc A^w\bb=\cc\left(I-\sum_x A^x\right)^{-1}\bb$$
by the proof of Theorem \ref{t:equiv}, we can also consider an evaluation of the linear representation by evaluating the entries of matrix $I-\sum_x A^x$ at the points in $M_{ms}(\F)^g$ where the resulting matrix is invertible. Of course, each linear representation yields at least one gr expression because the entries of the inverse of $I-\sum_x A^x$ can be calculated as in the proof of Lemma \ref{l:mat}. This gr expression might have a smaller domain, but where defined, its evaluation equals the evaluation of the linear representation.

For $1\le k\le g$ let $T_k$ be the generic $ms\times ms$ matrices. If a gr expression $S$ is evaluated at $(T_1,\dots, T_g)$, we get an element of $M_{ms}(\F(t_{ij}^{(k)}))$. But $S$ can be also evaluated at $(T_1,\dots, T_g)$ as a series resulting in an element of $M_m(\F\langle\!\langle t_{ij}^{(k)}\rangle\!\rangle)$. If we expand the commutative rational functions of the former matrix in a formal commutative power series about 0, we get the latter matrix.

Since $\F$ is infinite, a commutative rational function defined at 0 evaluates to 0 wherever defined if and only if its formal power series about 0 is the zero series. Thus the previous discussion implies that the evaluation of a gr expression $S$ at the generic tuple depends only on $S$ as the series. Therefore all gr expressions and linear representations of a given rational series yield the same evaluations wherever defined.
 
\begin{defn}
Let $S$ be a rational series. The matrix $S(T_1,\dots,T_g)\in M_{ms}(\F(t_{ij}^{(k)}))$ is called \emph{the generic evaluation of a series $S$ at the generic matrices of size $ms$}, and the intersection of the domains of its entries is denoted $\edom_{ms} S\subseteq M_{ms}(\F)^g$. Let
$$\edomstn{ms}S=\left\{\ulq\in \edom_{ms}S\colon
\overbrace{\ulq\oplus\dots\oplus\ulq}^k\in\edom_{kms}S \text{ for all } k\in\N\right\};$$
the set
$$\edomst S=\bigcup_{s\in\N} \edomstn{ms}S$$
is called the \emph{stable extended domain} of $S$.
\end{defn}

Although not addressed here, one can consider convergence of a generalized series with respect to norm topology if $\F\in\{\R,\C\}$; from this analytical point of view, evaluations of generalized series can be studied in a much wider sense. For an elaborate exposition we refer to \cite[Chapter 8]{KVV3} or \cite{Voi1, Voi2}.

We continue with a short review of the connection between nc polynomials over $\F$ and generalized polynomials over $M_m(\F)$. Following \cite[Section 1.7]{Co}, a \emph{$m\times m$ matrix ring} is a ring $R$ with $m^2$ elements $E_{ij}\in R$, called the matrix units, satisfying
$$E_{ij}E_{kl}=\delta_{jk}E_{il},\quad \sum_{i=1}^mE_{ii}=1,$$
and a homomorphism of $m\times m$ matrix rings is a homomorphism of rings that preserves the matrix units. For example, $M_m(\F)\gp$ and $M_{ms}(\F)$ are $m\times m$ matrix rings with the matrix units $e_{ij}$ and $e_{ij}\otimes I_s$, respectively, where $e_{ij}$ are the standard matrix units in $M_m(\F)$. Moreover, we have
\begin{equation}\label{e:mrfunctor}
\begin{split}
\Hom_{M_m}(M_m(\F)\gp,M_{ms}(\F))
&\cong \Hom_{M_m}(M_m(\fW_m(\F\gp)),M_m(M_s(\F)))\\
&\cong \Hom(\fW_m(\F\gp),M_s(\F)),
\end{split}
\end{equation}
where $\fW_m$ denotes the matrix reduction functor (see \cite[Section 1.7]{Co}). For a free algebra we have
$$\fW_m(\F\gp)=\F\gpfx,$$
where
$$\fX=\{\fx_{ij}^{(k)}\colon 1\le i,j\le m,\ 1\le k\le g\}$$
is an alphabet. The first isomorphism in \eqref{e:mrfunctor} follows from the isomorphism
\begin{equation}
\psi:M_m(\F)\langle \ulx\rangle\to M_m(\F\gpfx),\quad 
e_{i\imath}x_ke_{\jmath j}\mapsto \fx_{\imath\jmath}^{(k)} e_{ij}.
\end{equation}
The second isomorphism in \eqref{e:mrfunctor} is a consequence of the equivalence between the category of rings and the category of $m\times m$ matrix rings \cite[Theorem 1.7.1]{Co}.

\begin{prop}\label{p:GPI}
If $f\in M_m(\F)\gp$ is of degree $h$ and vanishes on matrices of size 
$m\lceil\tfrac{h+1}{2}\rceil$, then $f=0$.
\end{prop}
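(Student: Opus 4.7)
The plan is to transport the statement via the matrix reduction functor to a PI-theoretic question about $M_s(\F)$, and then apply the Amitsur--Levitzki bound on the minimal degree of a polynomial identity for $M_s(\F)$. Throughout, set $s=\lceil(h+1)/2\rceil$.

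First, I apply the isomorphism $\psi\colon M_m(\F)\gp\to M_m(\F\gpfx)$ recalled at the end of Section \ref{sec2}, obtaining $F:=\psi(f)\in M_m(\F\gpfx)$. Because $\psi$ sends each piece $e_{i\imath}x_ke_{\jmath j}$ to $\fx_{\imath\jmath}^{(k)}e_{ij}$ — converting one letter from $\ulx$ into one letter from $\fX$ per occurrence — every entry of $F$ is a polynomial in $\F\gpfx$ of degree at most $h$.

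Second, I translate the vanishing hypothesis. Identifying $M_{ms}(\F)\cong M_m(\F)\otimes_\F M_s(\F)$ so that $\iota(a)=a\otimes I_s$, every $q_k\in M_{ms}(\F)$ decomposes uniquely as $q_k=\sum_{\imath,\jmath}e_{\imath\jmath}\otimes q^{(k)}_{\imath\jmath}$ with $q^{(k)}_{\imath\jmath}\in M_s(\F)$. The equivalence \eqref{e:mrfunctor} of $m\times m$ matrix rings and their reductions then guarantees that $f(\ulq)\in M_{ms}(\F)$ equals the matrix whose $(i,j)$-th $M_s(\F)$-block is obtained by evaluating the $(i,j)$-th entry of $F$ at the $gm^2$-tuple $(q^{(k)}_{\imath\jmath})\in M_s(\F)^{gm^2}$. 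As $\ulq$ ranges over $M_{ms}(\F)^g$, these $gm^2$-tuples range over all of $M_s(\F)^{gm^2}$; hence the hypothesis forces every entry of $F$ to be a polynomial identity of $M_s(\F)$ of degree at most $h$.

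Finally, I invoke the Amitsur--Levitzki theorem: $M_s(\F)$ admits no nonzero polynomial identity of degree less than $2s$. Since $2s\ge h+1>h$, every entry of $F$ vanishes in $\F\gpfx$, so $F=0$, and because $\psi$ is an isomorphism, $f=0$. The only subtle step is the correspondence in the middle paragraph, where I must check that evaluation of $f$ on all of $M_{ms}(\F)^g$ — rather than merely on tuples commuting with $\iota(M_m(\F))$ — really corresponds to evaluation of the entries of $F$ on arbitrary $M_s(\F)$-tuples; this is ensured by using the matrix units $\iota(e_{ij})\in M_{ms}(\F)$ to split $M_{ms}(\F)$ as $M_m(M_s(\F))$, at which point every generalized-polynomial evaluation is decoded entrywise by $\psi$.
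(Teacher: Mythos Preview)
Your proof is correct and follows essentially the same route as the paper's: both apply $\psi$ to transport the problem to $M_m(\F\gpfx)$, observe that the vanishing hypothesis forces each entry of $\psi(f)$ to be a polynomial identity for $M_s(\F)$ of degree at most $h<2s$, and conclude via Amitsur--Levitzki. The only cosmetic difference is that the paper invokes Skolem--Noether explicitly to pass from the embedding $a\mapsto I_s\otimes a$ (the paper's fixed convention for $\iota$) to $a\mapsto a\otimes I_s$, whereas you achieve the same effect by choosing the identification $M_{ms}(\F)\cong M_m(\F)\otimes M_s(\F)$ up front.
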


\begin{proof}
Set $s=\lceil\tfrac{h+1}{2}\rceil$ and write
$$f=\sum_{|w|\le h} \sum_i a_{w,i}^{(0)}w_1a_{w,i}^{(1)}w_2\cdots w_{|w|}a_{w,i}^{(|w|)}$$
for $a_{w,i}^{(\ell)}\in M_m(\F)$. Then
$$\sum_{|w|\le h} \sum_i \left(I_s\otimes a_{w,i}^{(0)}\right)p_{w_1}\left(I_s\otimes a_{w,i}^{(1)}\right)p_{w_2}\cdots p_{w_{|w|}}\left(I_s\otimes a_{w,i}^{(|w|)}\right)=0$$
for all $p\in M_{ms}(\F)^g$ by the assumption. By the Skolem-Noether theorem \cite[Theorem 3.1.2]{Row}, there exists $q_0\in M_{ms}(\F)$ such that $I_s\otimes a=q_0(a\otimes I_s)q_0^{-1}$ for every $a\in M_m(\F)$. Therefore
$$\sum_{|w|\le h} \sum_i \left(a_{w,i}^{(0)}\otimes I_s\right)p_{w_1}\left(a_{w,i}^{(1)}\otimes I_s\right)p_{w_2}\cdots p_{w_{|w|}}\left(a_{w,i}^{(|w|)}\otimes I_s\right)=0$$
for all $p\in M_{ms}(\F)^g$. Hence $\Phi(f)=0$ for every $\Phi\in\Hom_{M_m}(M_m(\F)\gp,M_{ms}(\F))$ and thus $\Psi(\psi(f))=0$ for every $\Psi\in \Hom(\fW_m(\F\gp),M_s(\F))$. Since isomorphism $\psi$ preserves polynomial degree and there are no nonzero polynomial identities on $M_s(\F)$ of degree less than $2s\ge h$ (see e.g. \cite[Lemma 1.4.3]{Row}), we have $\psi(f)=0$ and finally $f=0$.
\end{proof}

\begin{prop}\label{p:zeroseries}
If a gr expression $S$ over $M_m(\F)$ vanishes on its domain, then $S$ is the zero series.
\end{prop}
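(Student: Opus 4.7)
The plan is to reduce the claim to Proposition~\ref{p:GPI} by passing to generic matrix evaluations, in the style of the discussion preceding the proposition. Fix a gr expression $S$ with $S(\ulq)=0$ for every $\ulq\in\dom S$. Since $0\in\dom_{ms}S$ for every $s\in\N$, the set $\dom_{ms}S$ is a nonempty Zariski open subset of $M_{ms}(\F)^g$; as $\F$ is infinite and $S$ vanishes on this dense subset, the generic evaluation $S(T_1,\dots,T_g)\in M_{ms}(\F(t_{ij}^{(k)}))$ must be the zero matrix of commutative rational functions.

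Next I would invoke the bridge established in Subsection~\ref{subs:eval}: the formal power series expansion about $0$ of $S(T_1,\dots,T_g)$ agrees with the evaluation of $S$, viewed as an element of $M_m(\F)\gs$, at the generic tuple. Hence
\[
\sum_{w\in\gp}[S,w](T_1,\dots,T_g)=0\qquad\text{in } M_m(\F\langle\!\langle t_{ij}^{(k)}\rangle\!\rangle).
\]
Each summand $[S,w](T_1,\dots,T_g)$ is homogeneous of total degree $|w|$ in the commuting variables $t_{ij}^{(k)}$, so uniqueness of the formal power series expansion gives, for every $n\ge 0$,
\[
\Big(\sum_{|w|=n}[S,w]\Big)(T_1,\dots,T_g)=0.
\]

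The generalized polynomial $S_n:=\sum_{|w|=n}[S,w]\in M_m(\F)\gp$ has degree $n$ and therefore vanishes identically on $M_{ms}(\F)^g$ for every $s\in\N$. Taking $s=\lceil\tfrac{n+1}{2}\rceil$ and applying Proposition~\ref{p:GPI} yields $S_n=0$ in $\agp$. The free-product structure of $\agp=\cA*_{\F}\F\gp$ gives the $\gp$-grading $\agp=\bigoplus_{w\in\gp}\cA^w$ (compatible with Remark~\ref{r:bimod}), and since $[S,w]\in\cA^w$, the equality $S_n=0$ forces $[S,w]=0$ for every $|w|=n$. Letting $n$ vary yields $S=0$ as a series.

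The main obstacle is the second step, namely identifying the formal commutative power series expansion of the rational function matrix $S(T_1,\dots,T_g)$ with the termwise evaluation of $S$ as a generalized series; this is precisely what is set up in the paragraph preceding Proposition~\ref{p:GPI}, so no genuinely new work is needed there. A minor technical point is the final word-by-word separation, which is a direct consequence of the normal form for free products of $\F$-algebras.
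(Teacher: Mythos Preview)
Your argument is correct and follows essentially the same route as the paper's proof: pass to the generic evaluation, identify it with the series evaluation $\sum_w[S,w](\ulT)$, extract the homogeneous components, and apply Proposition~\ref{p:GPI}. The only cosmetic difference is that the paper isolates the degree-$h$ part via the formal differentiation $\frac{\der}{\der t^h}M(t\ulT)\big|_{t=0}=h!\sum_{|w|=h}[S,w](\ulT)$, whereas you invoke the homogeneous grading directly; you also make the final word-by-word separation via $\agp=\bigoplus_{w}\cA^w$ explicit, which the paper leaves implicit.
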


\begin{proof}
Let $s\in\N$ be arbitrary and let $\ulT=(T_1,\dots, T_g)$ be the tuple of generic $ms\times ms$ matrices. Then $S(\ulT)$ is a matrix of commutative rational functions and the matrix of their expansions about $p$ is
$$M(\ulT)=\sum_w[S,w](\ulT).$$
The formal differentiation of commutative power series yields
\begin{equation}\label{e:der}
\frac{\der}{\der t^h} M\big(t\ulT\big)\Big|_{t=0}
=h!\sum_{|w|=h}[S,w](\ulT).
\end{equation}
As $S$ vanishes on $\dom_{ms} S$, we have $M(\ulT)=0$; therefore the left-hand side of \eqref{e:der} equals 0 for every $h$, hence the same holds for the right-hand side of \eqref{e:der}. Since $\sum_{|w|=h}[S,w]$ is a generalized polynomial of degree $h$ and $s$ is arbitrary, we have $[S,w]=0$ for all $w\in\gp$ by Proposition \ref{p:GPI}.
\end{proof}

Aside from \cite[Chapter 8]{KVV3}, comparable analytic versions of Proposition \ref{p:zeroseries} can also be found in \cite[Section 13]{Voi2} and \cite[Section 4]{KS}.



\section{Minimization}\label{sec4}

Up until now the algebra $\cA$ was quite general (except in Subsection \ref{subs:eval}). Since our motivation comes from studying nc rational functions, which are most often studied through their evaluations on division rings or matrices, we will restrict $\cA$ to being a simple Artinian $\F$-algebra. Thus the first subsection revises some of the properties of these rings.

Later on, we define three special types of representations: reducible, minimal and totally reducible representations, which have parallels in the classical theory (compare with controllability, observability and minimality in \cite[Sections 5, 6, 9]{BGM}, \cite[Subsection 4.1.1]{HMV} or \cite[Section 2.2]{BR}). The main results are Proposition \ref{p:red}, which provide us with a method of finding reduced representations, and Theorem \ref{t:redmin}, which asserts that reduced representations are not far away from minimal and that totally reduced representations are unique up to similarity.

Finally, we relate the dimension of minimal representations to the rank of the Hankel module, which is a natural substitute for the notion of a Hankel matrix.

For the rest of the paper the standard matrix units in $M_m(\F)$ are denoted by $e_{ij}$.


\subsection{Tools from the Artin-Wedderburn theory}\label{subs:sa}

Let $\cA$ be a simple Artinian ring with center $\F$. We state some well-known facts about $\cA$ and its left (right) modules that can be found in \cite[Sections 1.2 and 1.3]{La} and are part of the classical Artin-Wedderburn theory.
\begin{enumerate}
\item $\cA\cong M_m(D)$ for a unique $m\in\N$ and up-to-isomorphism unique central division $\F$-algebra $D$.
\item Every submodule of a left (right) $\cA$-module is a direct summand in it.
\item Every finitely generated left (right) $\cA$-module is up to an isomorphism of $\cA$-modules uniquely determined by its dimension over $D$, which is divisible by $m$.
\item In particular, a finitely generated left (right) $\cA$-module is free if and only if its dimension over $D$ is divisible by $m^2$, and is torsion if and only if its dimension over $D$ is less than $m^2$.
\item Every left (right) $\cA$-linearly independent subset $S$ of $\cA^n$ can be extended to a left (right) $\cA$-basis of $\cA^n$.
\end{enumerate}

By a torsion module we mean a left (right) $\cA$-module $\cM$ such that for every $\vv\in\cM$ there exists $a\in\cA\setminus\{0\}$ satisfying $a \vv=0$ ($\vv a=0$).

Let $\uly=\{y_k\colon k\in\N\cup\{0\}\}$ 
be an infinite alphabet. The following result can also be found 
in \cite[Corollary 7.2.13]{Row}. For the sake of self-containment we provide a short proof.

\begin{lem}\label{l:difvar}
If $f\in\cA^{y_0}\cdots\cA^{y_k}$ vanishes on $\cA$, then $f=0$.
\end{lem}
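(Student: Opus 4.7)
The plan is to interpret $f$ as a tensor: writing
$$f=\sum_\ell a_\ell^{(0)}y_0 a_\ell^{(1)}y_1\cdots y_k a_\ell^{(k+1)},\qquad a_\ell^{(j)}\in\cA,$$
$f$ is determined by $\tilde f=\sum_\ell a_\ell^{(0)}\otimes\cdots\otimes a_\ell^{(k+1)}\in\cA\otimes_\F\cdots\otimes_\F\cA$ ($k+2$ factors), and the lemma is precisely that the natural $\F$-linear evaluation map from this tensor power to the space of multilinear maps $\cA^{k+1}\to\cA$ is injective. I would proceed by induction on $k$.

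The base case $k=0$ asserts: if $\sum_\ell a_\ell c b_\ell=0$ for all $c\in\cA$ then $\sum_\ell a_\ell\otimes b_\ell=0$ in $\cA\otimes_\F\cA$. The hypothesis says that $\sum_\ell a_\ell\otimes b_\ell$ lies in the kernel of the $\F$-algebra homomorphism $\cA\otimes_\F\cA^{\mathrm{op}}\to\End_\F(\cA)$ sending $a\otimes b$ to $c\mapsto acb$. Since $\cA$ is central simple over $\F$, $\cA\otimes_\F\cA^{\mathrm{op}}$ is a simple ring, so this nonzero map is injective and $\sum_\ell a_\ell\otimes b_\ell=0$; this additive statement is the same whether interpreted in $\cA\otimes_\F\cA^{\mathrm{op}}$ or in $\cA\otimes_\F\cA$.

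For the inductive step, I would fix arbitrary $c_1,\dots,c_k\in\cA$ and apply the base case to the one-variable polynomial $\sum_\ell a_\ell^{(0)}y_0\bigl(a_\ell^{(1)}c_1\cdots c_k a_\ell^{(k+1)}\bigr)$, obtaining
$$\sum_\ell a_\ell^{(0)}\otimes\bigl(a_\ell^{(1)}c_1\cdots c_k a_\ell^{(k+1)}\bigr)=0\quad\text{in }\cA\otimes_\F\cA$$
for every choice of $(c_1,\dots,c_k)$. Choose an $\F$-basis $\{u_\alpha\}$ of $\cA$ and expand $a_\ell^{(0)}=\sum_\alpha\lambda_{\ell,\alpha}u_\alpha$ with $\lambda_{\ell,\alpha}\in\F$. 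The decomposition $\cA\otimes_\F\cA=\bigoplus_\alpha u_\alpha\otimes\cA$ and $\F$-linear independence of the $u_\alpha$ then imply, for each $\alpha$, that the generalized polynomial $f_\alpha=\sum_\ell\lambda_{\ell,\alpha}a_\ell^{(1)}y_1\cdots y_k a_\ell^{(k+1)}\in\cA^{y_1}\cdots\cA^{y_k}$ vanishes on $\cA$. By the inductive hypothesis each $f_\alpha=0$, and reassembling via $a_\ell^{(0)}=\sum_\alpha\lambda_{\ell,\alpha}u_\alpha$ yields $\tilde f=0$.

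The main obstacle is the base case, which rests on the simplicity of $\cA\otimes_\F\cA^{\mathrm{op}}$ (equivalently, of $D\otimes_\F D^{\mathrm{op}}$ for $\cA=M_m(D)$). This is a classical Brauer-theoretic fact about central simple $\F$-algebras and is precisely why the blanket hypothesis ``simple Artinian with center $\F$'' imposed at the start of this section is essential here. Granting this, the remainder of the argument is routine tensor-product bookkeeping, peeling off one variable at a time.
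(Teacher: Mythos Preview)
Your proof is correct and follows essentially the same route as the paper: induction on $k$, with the base case reduced to the injectivity of $\cA\otimes_\F\cA^{\mathrm{op}}\to\End_\F(\cA)$ via simplicity of the tensor product, and the inductive step peeling off the $y_0$ slot using $\F$-linear independence of coefficients. The only cosmetic difference is that the paper first rewrites $f=\sum_i a_i y_0 f_i$ with the $a_i$ chosen $\F$-linearly independent, whereas you keep an arbitrary expansion and perform the basis decomposition afterward; the two are equivalent bookkeeping.
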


\begin{proof}
We prove the claim by induction on $k$.

For the induction basis let $k=0$. Assume $f\in\cA^{y_0}$ vanishes on $\cA$ and consider the homomorphism of $\F$-algebras
\begin{equation}\label{e:map}\cA\otimes_{\F}\cA^{op}\to \End_{\F}(A),\quad a\otimes b\mapsto L_aR_b,\end{equation}
where $L_a$ and $R_b$ are multiplications by $a$ on the left and by $b$ on the right, respectively. It is 
a classical result (e.g. \cite[Theorem 3.1]{La}) that the $\F$-algebra $\cA\otimes_{\F}\cA^{op}$ is simple. Therefore the 
map \eqref{e:map} is injective. As noted in Remark \ref{r:bimod}, $\cA\otimes_{\F}\cA^{op}\cong \cA^{y_0}$,
so $f=0$.

For the induction step assume that the claim is established for $k-1$. Suppose $f\in\cA^{y_0}\cdots\cA^{y_k}$ vanishes on $\cA$. We can write it in 
the form 
$$f=\sum_i a_iy_0f_i,$$
where $a_i\in \cA$ are $\F$-linearly independent and $f_i\in \cA^{y_1}\cdots\cA^{y_k}$. Let 
$b_1,\dots b_k\in \cA$ be arbitrary and $\tilde{f}=f(y_0,b_1,\dots,b_k)$. By the basis of induction we have $\tilde{f}=0$. Since $a_i$ are $\F$-linearly independent, we have 
$f_i(b_1,\dots,b_k)=0$ for all $i$. Since $b_j$ were arbitrary, 
we have $f_i=0$ by the induction hypothesis and therefore $f=0$.
\end{proof}

We will frequently apply Lemma \ref{l:difvar} to $\cA^w$ for $w\in \gp$ using the isomorphism $\cA^w\cong \cA^{y_1}\cdots \cA^{y_{|w|}}$ from Remark \ref{r:bimod}. For notational simplicity we establish the following convention: if $f\in\cA^w$, then $f(a_1,\dots,a_g)$ denotes the usual evaluation of $f$ at a tuple $(a_1,\dots,a_g)\in\cA^g$, and $f[a_1,\dots, a_{|w|}]$ denotes the evaluation of $f$ as an element of $\cA^{y_1}\cdots \cA^{y_{|w|}}$ at a tuple $(a_1,\dots, a_{|w|})\in\cA^{|w|}$.


\subsection{Reduced, minimal and totally reduced representations}\label{subs:rmtr}

In this subsection we consider special kinds of representations that differ in availability and utility.

Let $S$ be a recognizable series with a linear representation $(\cc,A,\bb)$ of dimension $n$. For $N\in\N\cup\{0\}$ let
$$\cU_N^L=\{\uu\in \cA^{1\times n}\colon \uu A^w\bb=0\ \forall |w|\le N\},\quad
\cU_N^R=\{\vv\in \cA^{n\times 1}\colon \cc A^w\vv=0\ \forall |w|\le N\}.$$
Then $\cU_N^L$ is left $\cA$-module, $\cU_N^R$ is right $\cA$-module, $\cU_N^L\supseteq \cU_{N+1}^L$ and 
$\cU_N^R\supseteq \cU_{N+1}^R$. The modules
$$\cU_\infty^L=\bigcap_{N\in\N}\cU_N^L \quad \text{and}\quad \cU_\infty^R=\bigcap_{N\in\N}\cU_N^R$$
are called the \emph{left} and \emph{right obstruction module} of $(\cc,A,\bb)$, respectively.

\begin{defn}\label{d:red}
The representation $(\cc,A,\bb)$ of $S$ is:
\begin{enumerate}[(a)]
\item \emph{reduced} if its obstruction modules are torsion $\cA$-modules;
\item \emph{minimal} if its dimension is minimal among all the representations of $S$.
\item \emph{totally reduced} if its obstruction modules are trivial.
\end{enumerate}
\end{defn}

By definition, every totally reduced representation is reduced. In classical representation/realization theory, both notions are equivalent to minimality; moreover, all minimal representations of a given series are similar (\cite[Theorem 9.1 and Theorem 8.2]{BGM} or \cite[Proposition 2.2.1 and Theorem 2.2.4]{BR}). However, this is not true in our setting. Later we prove 
$(c)\Rightarrow (b)\Rightarrow (a)$ (Corollary \ref{c:cba}). The following examples show that these implications cannot be reversed.

\begin{exa}
Minimal representations are in general neither pairwise similar nor totally reduced. If $\cA=M_2(\F)$, then the series 
$\sum_{i\ge0}e_{11}(xe_{11})^i$ has non-similar minimal representations, e.g. $(1,e_{11}x,e_{11})$ and $(e_{11},e_{11}xe_{11},1)$.
\end{exa}

\begin{exa}
A reduced representation is not necessarily minimal. Again let $\cA=M_2(\F)$. Previously we specified that the zero series has a representation of dimension 0, but it also has a reduced representation of dimension 1, namely $(e_{11},e_{22}x,e_{22})$. For a less contrived example, one can consider the series $S=\sum_{i\ge0}(e_{11}x)^i$. It has a minimal realization $(1,e_{11}x,1)$ of dimension 1, but it also has a reduced representation
$$\left(
\begin{pmatrix}e_{11}& 1\end{pmatrix},
\begin{pmatrix}e_{22}x&0\\ 0&e_{11}x\end{pmatrix},
\begin{pmatrix}e_{22}\\ 1\end{pmatrix}
\right)$$
of dimension 2: indeed, elements in $\cU_1^L$ and $\cU_1^R$ are of the form
$$\begin{pmatrix}\alpha e_{11}+\beta e_{21}& 0\end{pmatrix}\qquad \text{and} \qquad
\begin{pmatrix}\alpha e_{21}+\beta e_{22}\\ 0\end{pmatrix},$$
respectively.
\end{exa}

From here on $\cA$ is a central simple Artinian $\F$-algebra that is isomorphic to $M_m(D)$, where $D$ is a division ring. We start with some observations about the obstruction modules $\cU_N^L$; appropriate right-hand versions hold for $\cU_N^R$.

\begin{lem}\label{l:stop}
If $\cU_N^L=\cU_{N+1}^L$, then $\cU_{N+1}^L=\cU_{N+2}^L$.
\end{lem}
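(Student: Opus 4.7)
The plan is to use a leading-letter shift to reduce vanishing at length $N+2$ to vanishing at length $N+1$ for an auxiliary row vector. The inclusion $\cU_{N+1}^L \supseteq \cU_{N+2}^L$ is tautological, so the entire content is showing that if $\uu \in \cU_{N+1}^L$ then $\uu A^w\bb = 0$ for every $w$ with $|w| = N+2$. I would factor such a $w$ as $xv$ with $x \in \ulx$ and $|v| = N+1$, and exploit
$$\uu A^w \bb = (\uu A^x)\, A^v\, \bb \in \cA^w.$$

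By the convention following Lemma~\ref{l:difvar}, an element of $\cA^w$ is zero as soon as every scalar evaluation is zero. So I fix $a\in \cA$, evaluate only the $x$-slot of $\uu A^x$ at $a$, and put $\uu' := \uu A^x[a] \in \cA^{1\times n}$; it suffices to prove $\uu' A^v \bb = 0$ in $\cA^v$ for every such $a$ and every $v$ of length $N+1$.

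The key claim is that $\uu' \in \cU_N^L$. Indeed, for any $w'$ with $|w'| \le N$ the word $xw'$ has length at most $N+1$, so $\uu A^{xw'}\bb = \uu A^x A^{w'} \bb = 0$ in $\cA^{xw'}$ by assumption. Evaluating the $x$-slot of this equality at $a$ (while leaving the remaining $|w'|$ slots formal) yields $\uu' A^{w'} \bb = 0$ in $\cA^{w'}$, which is exactly the condition for $\uu' \in \cU_N^L$. The hypothesis $\cU_N^L = \cU_{N+1}^L$ now upgrades this to $\uu' \in \cU_{N+1}^L$, giving $\uu' A^v \bb = 0$ for the desired $v$.

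Running this for all $a \in \cA$ shows that every scalar evaluation of $\uu A^{xv} \bb \in \cA^{xv}$ vanishes, and one last invocation of Lemma~\ref{l:difvar} gives $\uu A^{xv} \bb = 0$. The only mildly delicate point is bookkeeping: carefully distinguishing between formal elements of the bimodules $\cA^w$ and their scalar evaluations at tuples in $\cA^{|w|}$. Once this is tracked, there is no substantive obstacle, and the proof is essentially the observation that the "derivation" $\uu \mapsto \uu A^x[a]$ maps $\cU_{N+1}^L$ into $\cU_N^L$.
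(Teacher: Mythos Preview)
Your proof is correct and follows essentially the same approach as the paper's: both arguments factor a long word as $xw'$, pass to the auxiliary vector $\uu A^x[a]\in\cA^{1\times n}$, use the hypothesis $\cU_N^L=\cU_{N+1}^L$ on this vector, and invoke Lemma~\ref{l:difvar} (via Remark~\ref{r:bimod}) to move between formal elements of $\cA^w$ and their evaluations. The only difference is cosmetic: the paper argues by contrapositive (starting from $\uu\notin\cU_{N+2}^L$ and producing $\uu\notin\cU_{N+1}^L$), whereas you argue directly; your formulation is arguably a bit cleaner since the forward direction ``zero $\Rightarrow$ evaluation is zero'' is trivial, so Lemma~\ref{l:difvar} is only needed once at the end.
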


\begin{proof}
If $\uu\notin \cU_{N+2}^L$, then $\uu A^{xw} \bb \neq0$ for some $x\in \ulx$ and $|w|=k\le N+1$. Let $f\in\cA^{xw}$ 
be the nonzero entry of $\uu A^{xw} \bb$. By Remark \ref{r:bimod}, $f[y_0,\dots,y_k] \neq0$. By 
Lemma \ref{l:difvar}, there exists $b\in \cA$ such that $f[b,y_1,\dots,y_k] \neq0$. Again by 
Remark \ref{r:bimod}, $f[b,w_1,\dots,w_k]\neq0$ and so $\uu A^x(b)A^w\bb\neq0$. Therefore 
$\uu A^x(b)\notin \cU_{N+1}^L=\cU_N^L$, hence $\uu A^x(b)A^{w'}\bb\neq0$ for some 
$|w'|=k'\le N$. Let $h\in\cA^{y_0w'}$ be such entry of $\uu A^x(y_0)A^{w'}\bb$ that $h[b,w'_1,\dots,w'_{k'}]\neq0$. Thus 
$h\neq0$ and also $h[x,w'_1,\dots,w'_{k'}]\neq0$ by Remark \ref{r:bimod}. Therefore $\uu A^xA^{w'}\bb\neq0$ and $\uu\notin \cU_{N+1}^L$.
\end{proof}

\begin{lem}\label{l:termination}
If $\cA\cong M_m(D)$ and the dimension of $(\cc,A,\bb)$ is $n$, then $\cU_\infty^L=\cU_{mn-1}^L$.
\end{lem}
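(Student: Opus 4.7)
My plan is to combine Lemma \ref{l:stop} with a dimension-counting argument using the structure theory of modules over $\cA \cong M_m(D)$ recorded in Subsection \ref{subs:sa}. The chain $\cU_0^L \supseteq \cU_1^L \supseteq \cdots$ is a descending chain of left $\cA$-submodules of $\cA^{1\times n}$, and Lemma \ref{l:stop} tells us that as soon as a single equality $\cU_N^L = \cU_{N+1}^L$ occurs, the chain is constant thereafter, so $\cU_N^L = \cU_\infty^L$. Hence it suffices to exhibit such an $N$ in the range $\{0, 1, \ldots, mn-1\}$; then $\cU_{mn-1}^L = \cU_\infty^L$ follows from the descending property.

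I would first dispose of the trivial case $\bb = 0$: every $\uu$ automatically satisfies $\uu \bb = 0$, so $\cU_N^L = \cA^{1\times n}$ for every $N$ and the conclusion is immediate. In the remaining case some entry $b_i$ of $\bb$ is nonzero; taking $\uu_0$ to be the $i$-th standard basis vector gives $\uu_0 \bb = b_i \neq 0$, showing that $\cU_0^L \subsetneq \cA^{1\times n}$. Because $\cA^{1\times n}/\cU_0^L$ is a nonzero finitely generated left $M_m(D)$-module, and every such module has $D$-dimension a positive multiple of $m$ by Subsection \ref{subs:sa}, we obtain $\dim_D \cU_0^L \leq m^2 n - m$. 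The same structure result shows that each strict inclusion $\cU_N^L \supsetneq \cU_{N+1}^L$ drops $D$-dimension by at least $m$.

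Now, assuming for contradiction that no equality occurs among the first $mn$ steps, so
\[
\cU_0^L \supsetneq \cU_1^L \supsetneq \cdots \supsetneq \cU_{mn}^L,
\]
iterating the dimension bound yields $\dim_D \cU_{mn}^L \leq (m^2 n - m) - mn \cdot m = -m < 0$, which is absurd. Hence some $N \leq mn-1$ satisfies $\cU_N^L = \cU_{N+1}^L$, and Lemma \ref{l:stop} propagates this equality to give $\cU_{mn-1}^L = \cU_\infty^L$. There is no real obstacle in this argument; the only subtle point worth flagging is that the sharper bound $mn-1$ (rather than $mn$) comes precisely from the extra unit of dimension, $m^2 n - m$ in place of $m^2 n$, produced by the hypothesis $\bb \neq 0$, so the case split on $\bb$ is genuinely needed.
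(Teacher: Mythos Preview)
Your argument is correct and follows essentially the same route as the paper: dispose of the case $\bb=0$, then use that $\bb\neq0$ forces $\cU_0^L\subsetneq\cA^{1\times n}$, count $D$-dimensions in multiples of $m$ along the descending chain, and invoke Lemma~\ref{l:stop} to propagate the first equality. Your write-up is in fact more explicit than the paper's terse version, particularly in spelling out why the case split on $\bb$ is what buys the bound $mn-1$ rather than $mn$.
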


\begin{proof}
The statement trivially holds for $\bb=0$, so we assume $\bb\neq0$. As stated in Subsection 
\ref{subs:sa}, the dimension of every left $\cA$-module as a vector space over $D$ is divisible by $m$. Since the dimension of the vector space $\cA^{1\times n}$ over $D$ is $m^2n$ and $\bb\neq0$, the descending chain of left $\cA$-modules $\{\cU_N^L\}_{N\in\N}$ stops by 
Lemma \ref{l:stop} and
\[
\cA^n\supseteq \cU_0^L\supseteq \cU_1^L\supseteq \cdots \supseteq \cU_{mn-1}^L=\cU_{mn}^L=\cdots.
\qedhere \]
\end{proof}

\begin{prop}\label{p:red}
Every recognizable series has a reduced representation.
\end{prop}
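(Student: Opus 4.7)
The plan is to argue by induction on $n$, the dimension of a linear representation of $S$. The base case $n = 0$ is immediate: the unique $0$-dimensional representation is that of the zero series, and it is vacuously reduced. For the inductive step, start from any representation $(\cc, A, \bb)$ of $S$ of dimension $n$; if it is already reduced, there is nothing to prove, so suppose it is not. Then at least one of the obstruction modules $\cU_\infty^L \subseteq \cA^{1 \times n}$ or $\cU_\infty^R \subseteq \cA^{n \times 1}$ fails to be torsion, and by the left–right symmetry of the setup I may assume $\cU_\infty^L$ is non-torsion. By the definition of a torsion module recalled in Subsection \ref{subs:sa}, this supplies an element $\uu \in \cU_\infty^L$ with trivial left annihilator; the map $a \mapsto a\uu$ is then an injection of $\cA$ into $\cU_\infty^L$, so $\cA\uu$ is a free rank-one left $\cA$-submodule of $\cA^{1 \times n}$.

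Next I cut the dimension by one. Using property (5) of Subsection \ref{subs:sa}, extend the $\cA$-linearly independent singleton $\{\uu\}$ to a free left $\cA$-basis $v_1, \dots, v_{n-1}, \uu$ of $\cA^{1 \times n}$, and pass to the similar representation $(\tilde\cc, \tilde A, \tilde\bb)$ associated with this basis, so that the old $\uu$ becomes the $n$-th standard basis row. The hypothesis $\uu \in \cU_\infty^L$ then reads: the last entry of $\tilde A^w \tilde\bb$ vanishes in $\cA^w$ for every $w \in \gp$. In particular $\tilde\bb$ itself has last entry zero, so write $\tilde\bb = \begin{pmatrix}\bb''\\ 0\end{pmatrix}$ with $\bb'' \in \cA^{(n-1)\times 1}$, and partition each $\tilde A^x$ into $2\times 2$ blocks of shape $(n-1)+1$ with upper-left block $\tilde A^x_{11}$. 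A routine induction on $|w|$ then shows
$$\tilde A^w\tilde\bb = \begin{pmatrix}\tilde A^{w_1}_{11}\cdots \tilde A^{w_\ell}_{11}\,\bb''\\ 0\end{pmatrix},$$
the inductive step needing only that the bottom entry of the displayed column coincides with the last entry of $\tilde A^w\tilde\bb$, which is zero by hypothesis. Writing $\tilde\cc = (\cc'', c^\sharp)$ accordingly, I obtain
$$[S,w] = \tilde\cc\, \tilde A^w\,\tilde\bb = \cc''\,\tilde A^{w_1}_{11}\cdots \tilde A^{w_\ell}_{11}\,\bb'',$$
so $(\cc'', \tilde A_{11}, \bb'')$ is a representation of $S$ of dimension $n-1$, and the inductive hypothesis applied to it yields a reduced representation of $S$.

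The only delicate point is bridging the gap between the abstract annihilation $\uu A^w\bb = 0$ (holding for all $w$ at once) and the concrete block-triangular structure that makes the truncation possible. Property (5) of Subsection \ref{subs:sa} is the crucial ingredient here: it provides a free complement to $\cA\uu$ inside the free module $\cA^{1 \times n}$, something automatic for vector spaces but requiring the Artin–Wedderburn structure of $\cA$ in our setting. Once the basis is in place, the remainder of the step is mechanical block arithmetic. If instead $\cU_\infty^R$ is the non-torsion module, the same argument is run on the right, locating a free rank-one submodule of $\cA^{n\times 1}$ and truncating from the other side.
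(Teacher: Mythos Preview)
Your proof is correct and uses the same core idea as the paper: locate a free left $\cA$-summand inside $\cU_\infty^L$ (or $\cU_\infty^R$), change basis so that it occupies the last coordinate positions, and exploit the vanishing $\uu A^w\bb=0$ to show that the truncated upper-left block already represents $S$. The block computation you sketch is exactly equation \eqref{e:red1} in the paper specialized to $k=n-1$.

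The packaging differs slightly. The paper removes a \emph{maximal} linearly independent subset of $\cU_\infty^L$ in one stroke (so that after a single left step the left obstruction module becomes torsion), then does the same on the right, and alternates until the dimension stabilizes; this is presented explicitly as an algorithm, and the paper follows it with a remark about implementation. You instead peel off a single free rank-one summand and invoke the inductive hypothesis, which silently absorbs the left--right alternation. Your version is arguably cleaner as an existence proof; the paper's version is more informative algorithmically, since it tells you that one left pass already makes $\cU_\infty^L$ torsion and that at most $n$ alternating passes suffice.
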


\begin{proof}
Fix some representation $(\cc,A,\bb)$ of a series $S$. We will describe how to obtain a reduced representation of $S$ from $(\cc,A,\bb)$.

Let $\{\uu_{k+1},\dots,\uu_n\}\subset \cU_\infty^L$ be a maximal left $\cA$-linearly independent set. As stated in Subsection \ref{subs:sa}, it can be extended to a basis $\{\uu_1,\dots,\uu_n\}$ of $\cA^{1\times n}$. Let $U$ be a matrix with rows $\uu_i$. 
Then $(\cc U^{-1},UAU^{-1},U\bb)$ is a similar representation of $S$ corresponding to the basis change from the standard basis 
of $\cA^{1\times n}$ to $\{\uu_1,\dots,\uu_n\}$. Let 
$$\cc U^{-1}=\begin{pmatrix}\cc_1&\cc_2\end{pmatrix},\quad
U A U^{-1}=\begin{pmatrix}A_{11}& A_{12} \\ A_{21} & A_{22}\end{pmatrix},\quad
U\bb=\begin{pmatrix}\bb_1 \\ \bb_2\end{pmatrix}$$
be the block decompositions with regard to $\{\uu_1,\dots,\uu_k\}\cup\{\uu_{k+1},\dots,\uu_n\}$. We 
claim that
\begin{equation}\label{e:red1}
\begin{pmatrix}\vv_1&\vv_2\end{pmatrix}
\begin{pmatrix}A_{11}& A_{12} \\ A_{21} & A_{22}\end{pmatrix}^w
\begin{pmatrix}\bb_1 \\ \bb_2\end{pmatrix} =
\begin{pmatrix}\vv_1&0\end{pmatrix}
\begin{pmatrix}A_{11}& 0 \\ 0 & 0\end{pmatrix}^w
\begin{pmatrix}\bb_1 \\ 0\end{pmatrix}
\end{equation}
for all $\vv\in\cA^{1\times n}$ and $w\in \gp$. Indeed, because 
$\{\uu_{k+1},\dots,\uu_n\}\subset \cU_\infty^L$, we have
$$\begin{pmatrix}\vv_1&\vv_2\end{pmatrix}\begin{pmatrix}\bb_1 \\ \bb_2\end{pmatrix}
=\begin{pmatrix}\vv_1&0\end{pmatrix}\begin{pmatrix}\bb_1 \\ 0\end{pmatrix}$$
and then
\begin{align*}
\begin{pmatrix}\vv_1&\vv_2\end{pmatrix}
\begin{pmatrix}A_{11}& A_{12} \\ A_{21} & A_{22}\end{pmatrix}^{xw}
\begin{pmatrix}\bb_1 \\ \bb_2\end{pmatrix}
=&\begin{pmatrix}\vv_1&0\end{pmatrix}
\begin{pmatrix}A_{11}& A_{12} \\ A_{21} & A_{22}\end{pmatrix}^{xw}
\begin{pmatrix}\bb_1 \\ \bb_2\end{pmatrix}\\
=&\begin{pmatrix}\vv_1A_{11}^x&\vv_1A_{12}^x\end{pmatrix}
\begin{pmatrix}A_{11}& A_{12} \\ A_{21} & A_{22}\end{pmatrix}^w
\begin{pmatrix}\bb_1 \\ \bb_2\end{pmatrix}\\
\eqind &\begin{pmatrix}\vv_1A_{11}^x&0\end{pmatrix}
\begin{pmatrix}A_{11}& 0 \\ 0 & 0\end{pmatrix}^w
\begin{pmatrix}\bb_1 \\ 0\end{pmatrix}\\
=&\begin{pmatrix}\vv_1&0\end{pmatrix}
\begin{pmatrix}A_{11}& 0 \\ 0 & 0\end{pmatrix}^{xw}
\begin{pmatrix}\bb_1 \\ 0\end{pmatrix}
\end{align*}
by induction on $|w|$. Setting $\cc'=\cc_1$, $A'=A_{11}$ and $\bb'=\bb_1$, we get a representation $(\cc',A',\bb')$ of $S$ by \eqref{e:red1}. The $\cA$-module $\cU'^L_\infty$ is torsion by construction since it is isomorphic to $\cU_\infty^L$ modulo a maximal free submodule.

Next, we apply the right version of the upper procedure: we pick a maximal right linearly 
independent set in $\cU'^R_\infty$, extend it to a basis of $\cA^{n\times 1}$, transform 
$(\cc',A',\bb')$ with respect to the new basis, and get a representation $(\cc'',A'',\bb'')$ of 
$S$ with torsion module $\cU''^R_\infty$ by taking the suitable blocks.

Because we have no information about $\cU''^L_\infty$, we repeat the cycle of previous two steps. We continue until one of the steps does not change the dimension of the representation; therefore our procedure finishes after at most $n$ steps, where $n$ is the dimension of the starting representation. It is clear that $\cU_\infty^L$ and $\cU_\infty^R$ corresponding to the final representation are torsion modules.
\end{proof}

\begin{rem}
The proof of Proposition \ref{p:red} can serve as an algorithm for reducing representations. If $\cA=M_m(\F)$, it can be implemented in a purely linear-algebraic way without any other ring-theoretical traits. The implementation is mostly straightforward, although a few notes are in order.
\begin{itemize}
\item It is convenient to treat the polynomials from $\cA^w$ as elements of $\cA^{\otimes|w|}\cong M_{m^{|w|}}(\F)$, because the use of tensors makes it easier to apply linear algebra for computing the modules $\cU_N^L$.
\item Determining the obstruction modules is computationally expensive. We start by computing the modules $\cU_0^L$, $\cU_1^L$, etc., and we stop according to Lemmata \ref{l:stop} and 
\ref{l:termination}.
\item A maximal $\cA$-linearly independent subset of $\cU_\infty^L$ can be found using \cite[Lemma 8]{CIK}.
\end{itemize}
\end{rem}

\begin{exa}
As mentioned in the proof of Proposition \ref{p:red}, in general there is no guarantee that the reduction on one side of the representation will not tarnish the other side. For example, consider the representation
$$\left(
\begin{pmatrix}e_{11}& e_{21} & 0\end{pmatrix},
\begin{pmatrix}
x e_{22}& x e_{22}& 0\\
0 &e_{22} x e_{22}& e_{11}x\\
0 & 0 & x
\end{pmatrix},
\begin{pmatrix}0\\0\\ e_{11}\end{pmatrix}
\right)$$
over $M_2(\F)$. One can check that $\cU_\infty^R=0$ and $\cU_\infty^L$ has a maximal free submodule of rank 1 generated by $(1, 0, 0)$. After the left-hand side reduction with respect to this vector we get a nontrivial 
$\cU'^R_\infty$.
\end{exa}

\begin{lem}\label{l:redfree}
Let $(\cc,A,\bb)$ be a representation and assume $\cU_\infty^L$ is a free module. Then $\cU'^R_\infty$ from the first step of algorithm in Proposition \ref{p:red} embeds into $\cU_\infty^R$.
\end{lem}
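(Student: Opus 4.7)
The plan is to exhibit a concrete $\cA$-module embedding $\phi\colon\cU'^R_\infty\to\cU_\infty^R$ by $\phi(\vv_1)=U^{-1}\begin{pmatrix}\vv_1\\ 0\end{pmatrix}$, where $U$ is the change-of-basis matrix from the proof of Proposition \ref{p:red}. Injectivity is immediate, so the content is verifying that $\phi$ lands in $\cU_\infty^R$. Writing $B=UAU^{-1}$ with block decomposition $B^w=\begin{pmatrix}B^w_{11}&B^w_{12}\\ B^w_{21}&B^w_{22}\end{pmatrix}$ and $U\bb=\begin{pmatrix}\bb_1\\ 0\end{pmatrix}$ as in Proposition \ref{p:red}, this amounts to showing $\cc_1 B^w_{11}\vv_1+\cc_2 B^w_{21}\vv_1=0$ for every $w\in\gp$.

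First I would upgrade the hypothesis to $\cU'^L_\infty=0$. Equation \eqref{e:red1} says that the left obstruction of the conjugated representation $(\cc U^{-1},B,U\bb)$ consists exactly of row vectors $\bar\uu=(\bar\uu_1,\bar\uu_2)$ with $\bar\uu_1\in\cU'^L_\infty$, hence decomposes as $\cU'^L_\infty\oplus\cA^{1\times(n-k)}$. This module is $\cA$-isomorphic to $\cU_\infty^L$, which is free of rank $n-k$ by assumption; since the second summand already realises that rank, $\cU'^L_\infty$ is a direct summand of a free $\cA$-module of rank $0$. Over $\cA\cong M_m(D)$ every finitely generated projective module is free, so $\cU'^L_\infty=0$.

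The heart of the argument is the universal identity
\[
\cc_1\bigl(B^w_{11}-A_{11}^w\bigr)+\cc_2 B^w_{21}=0\qquad\text{in }(\cA^w)^{1\times k},\ \forall w\in\gp.
\]
Call the left-hand side $\rho^{(w)}$. I would first show $\rho^{(w)}A_{11}^u\bb_1=0$ for every $u\in\gp$ by expanding $B^{wu}=B^wB^u$ block-wise, killing every term carrying the factor $B^u_{21}\bb_1=0$, substituting $B^u_{11}\bb_1=A_{11}^u\bb_1$ (both consequences of $B^u(U\bb)=(A_{11}^u\bb_1,0)^T$ from the proof of Proposition \ref{p:red}), and invoking the baseline identity $\cc_1 B^{wu}_{11}\bb_1+\cc_2 B^{wu}_{21}\bb_1=\cc_1 A_{11}^{wu}\bb_1$, which is just $(\cc U^{-1})B^{wu}(U\bb)=[S,wu]=\cc_1A_{11}^{wu}\bb_1$. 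Next I would use $\cU'^L_\infty=0$, which says $\bar\uu_1\mapsto(\bar\uu_1 A_{11}^u\bb_1)_u$ is injective on $\cA^{1\times k}$, together with Lemma \ref{l:difvar} to promote this injectivity to $(\cA^w)^{1\times k}$: each evaluation $\rho^{(w)}[b_1,\dots,b_{|w|}]\in\cA^{1\times k}$ still annihilates every $A_{11}^u\bb_1$, hence vanishes for all $(b_j)\in\cA^{|w|}$; Lemma \ref{l:difvar} applied componentwise then forces $\rho^{(w)}=0$. Right-multiplying this identity by $\vv_1\in\cU'^R_\infty$ and using $\cc_1 A_{11}^w\vv_1=0$ produces the required vanishing.

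The main obstacle is the universal identity: a direct attempt to show $\phi(\vv_1)\in\cU_\infty^R$ from the hypothesis $\vv_1\in\cU'^R_\infty$ alone already breaks at $w=x$, where it would demand $\cc_2 A_{21}^x\vv_1=0$—a relation not implied by $\cc_1 A_{11}^u\vv_1=0$ alone. Promoting the statement to an equality of row vectors in $(\cA^w)^{1\times k}$, rather than equalities of scalars obtained by testing against $\vv_1$, is what unlocks the freeness hypothesis through $\cU'^L_\infty=0$ and brings Lemma \ref{l:difvar} into play.
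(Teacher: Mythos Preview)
Your proof is correct, but it takes a noticeably more circuitous route than the paper's. The paper extracts a structural consequence of freeness that you do not use: since $\{\uu_{k+1},\dots,\uu_n\}$ is now a \emph{basis} of $\cU_\infty^L$ and $\cU_\infty^L$ is closed under $\uu\mapsto\uu A^x(a)$, one immediately gets $A_{21}=0$, i.e., $B=UAU^{-1}$ is block upper triangular. Then $B^w_{11}=A_{11}^w$ and $B^w_{21}=0$ for every $w$, so the refined identity \eqref{e:red2} (for \emph{arbitrary} $\vv_1,\vv_2,\vv_3$) is essentially automatic, and the embedding $\vv_3\mapsto U^{-1}\begin{pmatrix}\vv_3\\0\end{pmatrix}$ drops out with an ``if and only if''. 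Your approach instead deduces $\cU'^L_\infty=0$ from freeness and then proves the weaker identity $\cc_1(B^w_{11}-A_{11}^w)+\cc_2 B^w_{21}=0$ by passing through evaluations and Lemma~\ref{l:difvar}; this identity is of course an immediate corollary of $A_{21}=0$. So both arguments hinge on freeness, but the paper converts it into a block shape while you convert it into an injectivity statement---the former is shorter and yields a stronger conclusion. One small correction: your line ``over $\cA\cong M_m(D)$ every finitely generated projective module is free'' is false for $m>1$ (the simple module $D^m$ is projective but not free); the conclusion $\cU'^L_\infty=0$ should instead be argued by $D$-dimension count, using that $\cU'^L_\infty\oplus\cA^{1\times(n-k)}\cong\cA^{1\times(n-k)}$ forces $\dim_D\cU'^L_\infty=0$.
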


\begin{proof}
Assume the notation of the proof of Proposition \ref{p:red}. Note that $\{\uu_{k+1},\dots,\uu_n\}$ is now a basis for $\cU_\infty^L$; thus we have $A_{21}=0$.

Now we can prove a refined version of \eqref{e:red1}, namely
\begin{equation}\label{e:red2}
\begin{pmatrix}\vv_1&\vv_2\end{pmatrix}
\begin{pmatrix}A_{11}& A_{12} \\ 0 & A_{22}\end{pmatrix}^w
\begin{pmatrix}\vv_3 \\ \bb_2\end{pmatrix} =
\begin{pmatrix}\vv_1&0\end{pmatrix}
\begin{pmatrix}A_{11}& 0 \\ 0 & 0\end{pmatrix}^w
\begin{pmatrix}\vv_3 \\ 0\end{pmatrix}
\end{equation}
for all $\vv_1\in\cA^{1\times k}$, $\vv_2\in\cA^{1\times (n-k)}$, $\vv_3\in\cA^{k\times 1}$ 
and $w\in \gp$. We have
$$\begin{pmatrix}\vv_1&\vv_2\end{pmatrix}\begin{pmatrix}\vv_3 \\ \bb_2\end{pmatrix}
=\vv_1\vv_3+\begin{pmatrix}0&\vv_2\end{pmatrix}\begin{pmatrix}\bb_1\\ \bb_2\end{pmatrix}
=\begin{pmatrix}\vv_1&0\end{pmatrix}\begin{pmatrix}\vv_3 \\ 0\end{pmatrix}$$
and then
\begin{align*}
\begin{pmatrix}\vv_1&\vv_2\end{pmatrix}
\begin{pmatrix}A_{11}& A_{12} \\ 0 & A_{22}\end{pmatrix}^{xw}
\begin{pmatrix}\vv_3 \\ \bb_2\end{pmatrix}
=&\begin{pmatrix}\vv_1A_{11}^x&\vv_1A_{12}^x+\vv_2A_{22}^x\end{pmatrix}
\begin{pmatrix}A_{11}& A_{12} \\ 0 & A_{22}\end{pmatrix}^w
\begin{pmatrix}\vv_3 \\ \bb_2\end{pmatrix}\\
\eqind & \begin{pmatrix}\vv_1A_{11}^x&0\end{pmatrix}
\begin{pmatrix}A_{11}& 0 \\ 0 & 0\end{pmatrix}^w
\begin{pmatrix}\vv_3 \\ 0\end{pmatrix}\\
=&\begin{pmatrix}\vv_1&0\end{pmatrix}
\begin{pmatrix}A_{11}& 0 \\ 0 & 0\end{pmatrix}^{xw}
\begin{pmatrix}\vv_3 \\ 0\end{pmatrix}
\end{align*}
by induction. Setting $\vv_1=\cc_1$ and $\vv_2=\cc_2$ we get $\bb_2\in U\cU_\infty^R$ for $\vv_3=0$ and then 
\[
\vv_3\in \cU_\infty'^R \ \iff \begin{pmatrix}\vv_3 \\ 0\end{pmatrix}\in U\cU_\infty^R.
\qedhere \]
\end{proof}

\begin{cor}\label{c:redfree}
If $\cU_\infty^L$ and $\cU'^R_\infty$ are free modules, then the algorithm of Proposition \ref{p:red} yields a totally reduced representation after the first cycle.
\end{cor}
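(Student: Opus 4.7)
The plan is to exploit the freeness assumptions to strengthen the conclusion of Proposition~\ref{p:red} from ``torsion'' to ``trivial'' at each step, so that a single cycle already suffices. The main obstacle is the following strengthening: if $\cU_\infty^L$ is free, then after one left-hand reduction the new left obstruction module $\cU'^L_\infty$ is not merely torsion but actually zero, and similarly for the right-hand side. The symmetric version of Lemma~\ref{l:redfree} will then prevent the second (right-hand) step from spoiling the triviality obtained on the left.

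To prove the strengthening, note that when $\cU_\infty^L$ is free the maximal left $\cA$-linearly independent set $\{\uu_{k+1},\dots,\uu_n\}\subset\cU_\infty^L$ chosen in the proof of Proposition~\ref{p:red} is in fact a basis of $\cU_\infty^L$. Consequently the left obstruction module of the transformed representation $(\cc U^{-1},UAU^{-1},U\bb)$, namely $\{\uu\in\cA^{1\times n}\colon \uu U\in\cU_\infty^L\}$, equals $0\oplus\cA^{1\times(n-k)}$ in the block decomposition used in Proposition~\ref{p:red}. Now if $\uu'\in\cU'^L_\infty$, then $(\uu',0)(UAU^{-1})^w(U\bb)=\uu' A_{11}^w\bb_1=0$ for every $w\in\gp$ by \eqref{e:red1}, so $(\uu',0)$ lies in that obstruction module; hence $\uu'=0$, and so $\cU'^L_\infty=0$.

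With this in hand the result follows readily. After the first step we have $\cU'^L_\infty=0$, and by Lemma~\ref{l:redfree} the right obstruction module $\cU'^R_\infty$ of $(\cc',A',\bb')$ embeds into $\cU_\infty^R$. The second step is the right-hand analogue of the first, applied to $(\cc',A',\bb')$ whose right obstruction module is free by hypothesis; by the left-right symmetric version of the previous paragraph the output $(\cc'',A'',\bb'')$ satisfies $\cU''^R_\infty=0$. Finally, the left-right symmetric version of Lemma~\ref{l:redfree} yields an embedding $\cU''^L_\infty\hookrightarrow\cU'^L_\infty=0$. Therefore both obstruction modules of $(\cc'',A'',\bb'')$ are trivial, i.e., the representation is totally reduced.
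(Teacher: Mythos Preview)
Your proof is correct and fills in precisely the argument the paper leaves implicit (the corollary is stated without proof). The key observation---that freeness of $\cU_\infty^L$ upgrades the conclusion $\cU'^L_\infty$ ``torsion'' in Proposition~\ref{p:red} to $\cU'^L_\infty=0$, because the maximal linearly independent set is then a basis---together with the left-right symmetric version of Lemma~\ref{l:redfree}, is exactly what the author intends. One remark: the sentence invoking Lemma~\ref{l:redfree} to obtain $\cU'^R_\infty\hookrightarrow\cU_\infty^R$ is true but unused in the rest of your argument; you may drop it.
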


Our next goal is to show that reduced representations are not far from minimal ones. First we need the following lemma.

\begin{lem}\label{l:dimbound}
Let $(b,A,c)$ be a representation of dimension $n$ and define left $\cA$-modules
\begin{equation}\label{e:W0W}
\cW_0=\lin_{\cA}\{\cc A^w[a_1,\dots, a_{|w|}]\colon w\in \gp, \ a_i\in\cA\},
\qquad \cW=\cW_0/(\cW_0\cap \cU_\infty^L).
\end{equation}
Then the dimension of $\cW_0$ as a left vector space over $D$ is $m^2n-\dim_D\cU_\infty^R$ and the dimension of $\cW$ as a left vector space over $D$ is at least
$m^2n-(\dim_D\cU_\infty^R+\dim_D\cU_\infty^L)$.
\end{lem}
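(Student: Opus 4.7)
I begin with the easy reduction to a single dimension formula. The short exact sequence $0\to \cW_0\cap \cU_\infty^L\to \cW_0\to \cW\to 0$ of left $\cA$-modules yields
$$\dim_D \cW=\dim_D\cW_0-\dim_D(\cW_0\cap \cU_\infty^L)\geq \dim_D\cW_0-\dim_D\cU_\infty^L,$$
so the second assertion follows from the first as soon as $\dim_D\cW_0=m^2n-\dim_D\cU_\infty^R$ is established. The entire argument therefore reduces to computing $\dim_D\cW_0$.

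The core idea is to identify $\cU_\infty^R$ with the right annihilator of $\cW_0$ under the natural $\cA$-valued pairing $\cA^{1\times n}\times \cA^{n\times 1}\to \cA$, $(\uu,\vv)\mapsto \uu\vv$. For $\vv\in\cA^{n\times 1}$ and $w\in\gp$ the product $\cc A^w\vv$ lives in $\cA^w$, so via the bimodule identification $\cA^w\cong \cA^{y_1}\cdots \cA^{y_{|w|}}$ of Remark \ref{r:bimod}, Lemma \ref{l:difvar} reformulates the vanishing $\cc A^w\vv=0$ as
$$\bigl(\cc A^w[a_1,\ldots,a_{|w|}]\bigr)\vv=0\qquad\text{for every tuple }(a_1,\ldots,a_{|w|})\in\cA^{|w|}.$$
Consequently $\vv\in\cU_\infty^R$ if and only if $\uu\vv=0$ for every generator $\uu=\cc A^w[a_1,\ldots,a_{|w|}]$ of $\cW_0$. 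Left $\cA$-linearity of the pairing in the first argument extends this to all of $\cW_0$, giving $\cU_\infty^R=\cW_0^\perp$.

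Finally I would close via standard Artin--Wedderburn duality. Since $\cA$ is simple Artinian, $\cW_0$ is a direct summand of $\cA^{1\times n}$, so applying $\Hom_{\cA}(-,\cA)$ to the split short exact sequence $0\to\cW_0\to\cA^{1\times n}\to\cA^{1\times n}/\cW_0\to 0$ produces a split short exact sequence of right $\cA$-modules. Under the canonical isomorphism $\Hom_{\cA}(\cA^{1\times n},\cA)\cong\cA^{n\times 1}$, $\vv\mapsto(\uu\mapsto\uu\vv)$, the image of $\Hom_{\cA}(\cA^{1\times n}/\cW_0,\cA)$ is precisely $\cW_0^\perp=\cU_\infty^R$. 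Combined with the Morita-type fact $\dim_D\Hom_{\cA}(\cM,\cA)=\dim_D\cM$ for every finitely generated left $\cA$-module $\cM$ (which reduces, by additivity, to the simple case where $\cA\cong S^m$ and $\End_{\cA}(S)\cong D^{op}$), this gives $m^2n=\dim_D\cU_\infty^R+\dim_D\cW_0$, the desired formula.

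The main obstacle is the identification $\cU_\infty^R=\cW_0^\perp$: one has to upgrade the word-by-word vanishing condition defining $\cU_\infty^R$ to a vanishing condition involving arbitrary coefficients, which is exactly what Lemma \ref{l:difvar} supplies once the product $\cc A^w\vv$ is viewed as an element of the bimodule $\cA^w$ rather than merely of $\cA$. The remaining steps are routine bookkeeping with the Artin--Wedderburn facts recalled in Subsection \ref{subs:sa}.
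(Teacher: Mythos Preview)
Your proof is correct and follows the same architecture as the paper's: both reduce the second assertion to the first via $\dim_D\cW\ge\dim_D\cW_0-\dim_D\cU_\infty^L$, and both identify $\cU_\infty^R$ with the right annihilator $\cW_0^\perp$ using Lemma~\ref{l:difvar} for the nontrivial inclusion. The only difference is in the final dimension count: the paper writes $\dim_D\cW_0=m^2i+mj$ with $j<m$, chooses coordinates so that $\cW_0=\cA^{1\times i}\times I\times 0$ for a suitable right ideal $I\subset\cA$, and then reads off $\cU_\infty^R$ explicitly as $0\times J\times\cA^{(n-i-1)\times 1}$; you instead invoke the split exact sequence and the duality $\dim_D\Hom_{\cA}(\cM,\cA)=\dim_D\cM$. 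Your route is cleaner and coordinate-free, while the paper's is more elementary and self-contained (it uses only the structural facts already listed in Subsection~\ref{subs:sa}); both arrive at $\dim_D\cW_0+\dim_D\cU_\infty^R=m^2n$ with equal rigor.
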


\begin{proof}
First we claim that
\begin{equation}\label{e:314}
\cU_\infty^R=\{\uu\in\cA^{n\times 1}\colon \vv\uu=0\ \forall \vv\in\cW_0 \}.
\end{equation}
Indeed, the inclusion $\subseteq$ is obvious and $\supseteq$ follows from Lemma \ref{l:difvar}.

It is enough to prove the first part of the lemma since the second part then follows from $\dim_D\cW\ge \dim_D\cW_0-\dim_D\cU_\infty^L$.

Let $\dim_D\cW_0=m^2i+mj$ for $j<m$. By Subsection \ref{subs:sa}, we can choose a basis of $\cA^{1\times n}$ in such way that $\cW_0$ equals $\cA^{1\times i}\times I \times 0^{1\times (n-i-1)}$, where $I\subset\cA$ is the right ideal of matrices whose first $m-j$ rows are zero. By \eqref{e:314}, $\cU_\infty^R$ equals  $0^{i\times 1}\times J \times \cA^{(n-i-1)\times 1}$, where $J\subset \cA$ is the left ideal of matrices whose last $j$ columns are zero. Therefore $\dim_D\cU_\infty^R=m^2(n-i-1)+m(m-j)$.
\end{proof}

\begin{thm}\label{t:redmin}
Let $S$ be a recognizable series.
\begin{enumerate}
\item The difference in dimensions of two reduced representations of $S$ is at most 1.
\item A representation is minimal if $\delta:=\dim_D\cU_\infty^L+\dim_D\cU_\infty^R<m^2$.
\item Totally reduced representations are similar via a unique transition matrix.
\end{enumerate}
\end{thm}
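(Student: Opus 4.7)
The key is to show that the module $\cW$ from Lemma \ref{l:dimbound} is canonically isomorphic to an invariant of $S$, independent of the chosen representation. Once that is established, parts (1) and (2) reduce to a dimension count, and part (3) becomes a bookkeeping argument using this canonical identification.

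For any representation $(\cc,A,\bb)$ of $S$, define $\Phi:\cW_0\to\ags$ by sending $\vv$ to the series $T_\vv$ with $[T_\vv,w]=\vv A^w\bb$. The kernel is exactly $\cW_0\cap\cU_\infty^L$. On a generator $\vv=\cc A^v[a_1,\dots,a_{|v|}]$, the coefficient $[T_\vv,w]\in\cA^w$ is nothing but the ``partial evaluation'' of $[S,vw]\in\cA^{vw}$ obtained by feeding $(a_1,\dots,a_{|v|})$ into the first $|v|$ free slots, using the bimodule decomposition $\cA^{vw}\cong\cA^v\otimes\cA^w$ of Remark \ref{r:bimod}. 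Since this partial evaluation is an operation on $\ags$ itself, the image $\Phi(\cW_0)$ coincides, for every representation, with a fixed left $\cA$-submodule $\cH\subseteq\ags$ determined by $S$ alone. Thus $\cW\cong\cH$ and $\mu:=\dim_D\cW$ is an invariant of $S$.

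Let $n_0$ be the minimal dimension of a representation of $S$. Applied to a minimal representation, $\mu\le\dim_D\cW_0\le m^2 n_0$. Applied to an arbitrary representation of dimension $n$, Lemma \ref{l:dimbound} gives $\mu\ge m^2 n-\delta$. Combining the two, $n\le n_0+\delta/m^2$. Part (2) follows at once: $\delta<m^2$ forces $n=n_0$. For part (1), a reduced representation has torsion obstruction modules, each of $D$-dimension strictly below $m^2$, so $\delta<2m^2$ and $n\le n_0+1$; hence the dimensions of any two reduced representations differ by at most $1$.

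For part (3), two totally reduced representations $(\cc_i,A_i,\bb_i)$ have $\delta=0$ and are thus minimal of dimension $n_0$ by (2). With $\cU_\infty^L=\cU_\infty^R=0$, the computation in the proof of Lemma \ref{l:dimbound} yields $\cW_0^{(i)}=\cA^{1\times n_0}$ and $\cW^{(i)}=\cW_0^{(i)}$, so each $\Phi_i$ is an isomorphism $\cA^{1\times n_0}\to\cH$. The composition $\Phi_2^{-1}\circ\Phi_1$ is then a left $\cA$-automorphism of $\cA^{1\times n_0}$, hence right multiplication by a unique invertible $P^{-1}\in\cA^{n_0\times n_0}$ with $\cc_1 A_1^v[a_1,\dots,a_{|v|}]P^{-1}=\cc_2 A_2^v[a_1,\dots,a_{|v|}]$ for every $v$ and $a_i$. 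Setting $v=1$ gives $\cc_2=\cc_1 P^{-1}$. Expanding an arbitrary $\uu\in\cA^{1\times n_0}$ in the spanning set $\{\cc_1 A_1^v[\ldots]\}$ and applying the identity with $v$ extended by $x$ yields $\uu A_1^x(a)P^{-1}=\uu P^{-1}A_2^x(a)$ for every $\uu$ and $a$, whence $A_2^x(a)=PA_1^x(a)P^{-1}$ for all $a\in\cA$; the case $k=0$ of Lemma \ref{l:difvar} upgrades this to $A_2^x=PA_1^x P^{-1}$. Finally, $\cc_2 A_2^w(P\bb_1-\bb_2)=\cc_1 A_1^w\bb_1-\cc_2 A_2^w\bb_2=0$ for all $w$ gives $P\bb_1-\bb_2\in\cU_\infty^R=0$, so $\bb_2=P\bb_1$. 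Uniqueness of $P$ follows because any alternative $P'$ would satisfy $\uu(P^{-1}-P'^{-1})=0$ for every $\uu\in\cW_0^{(1)}=\cA^{1\times n_0}$, forcing $P=P'$. The substantive point is the canonical identification $\cW\cong\cH$ via partial evaluation; everything else is then a clean dimension count for (1) and (2) and direct bookkeeping for (3).
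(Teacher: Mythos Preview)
Your proof is correct and essentially the same as the paper's: both show that $\dim_D\cW$ is an invariant of $S$ and then read off (1) and (2) from the inequality of Lemma~\ref{l:dimbound}, while (3) is the same bookkeeping with the transition matrix. The only cosmetic difference is that you route the isomorphism $\cW\cong\cW'$ through a common canonical image $\cH\subseteq\ags$ (anticipating the Hankel module of Subsection~\ref{subs:hankel}), whereas the paper writes down the map $\cW\to\cW'$ directly via $\cc A^w[a_1,\dots,a_{|w|}]\mapsto \cc' A'^w[a_1,\dots,a_{|w|}]$; the well-definedness check is the same in either packaging.
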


\begin{proof}
Let $(\cc,A,\bb)$ and $(\cc',A',\bb')$ be two representations for $S$ of dimensions $n$ and $n'$, and let $\cW$ and $\cW'$ be the corresponding left $\cA$-modules as in \eqref{e:W0W} from 
Lemma \ref{l:dimbound}. We want to show that the rule
\begin{equation}\label{e:isomap}
\cc A^w[a_1,\dots, a_{|w|}]\mapsto \cc' A'^w[a_1,\dots, a_{|w|}]
\end{equation}
defines a bijective linear map $\phi:\cW\to \cW'$. Let us first verify that we have a well-defined map. Assume
$$\sum_jf_j\cc A^{w_j}[a_{1j},\dots, a_{|w_j|j}]\in \cU_\infty^L;$$
then
$$\left(\sum_jf_j\cc A^{w_j}[a_{1j},\dots, a_{|w_j|j}]\right)A^{w_0}\bb=0$$
for all $w_0\in \gp$. Since our representations determine the same series, we have $\cc A^w\bb=\cc'A'^w\bb'$ for every $w\in \gp$, so
$$\left(\sum_jf_j\cc' A'^{w_j}[a_{1j},\dots, a_{|w_j|j}]\right)A'^{w_0}\bb'=0$$
and thus
$$\sum_jf_j\cc' A'^{w_j}[a_{1j},\dots, a_{|w_j|j}]\in \cU_\infty'^L.$$
Therefore the rule \eqref{e:isomap} indeed defines a map; by symmetry, we can reverse these implications, so $\phi$ is injective; it is also linear and surjective by definition. Hence the left modules $\cW$ and $\cW'$ are isomorphic, so the intervals $[m^2n-\delta,m^2n]$ and $[m^2n'-\delta',m^2n']$ overlap by Lemma \ref{l:dimbound}. Therefore we have
\begin{equation}\label{e:ineq}
n\le n' \Rightarrow m^2(n'-n)\le\delta' \quad \text{and} 
\quad n'\le n \Rightarrow m^2(n-n')\le\delta.
\end{equation}

(1) If both representations are reduced, then $\delta$ and $\delta'$ are less than $2m^2$ since torsion $\cA$-module has dimension less than $m^2$; hence \eqref{e:ineq} implies $|n-n'|\in\{0,1\}$ and so (1) holds.

(2) Assume $n\ge n'$; if $\delta<m^2$, then \eqref{e:ineq} implies $n=n'$, so the first representation is minimal.

(3) In this case we have $\cW=\cA^{1\times n}=\cW'$, so the map $\phi$ yields invertible
$P\in\cA^{n\times n}$ such that
\begin{equation}\label{e4}
\cc A^w[a_1,\dots, a_{|w|}]P^{-1}=\cc' A'^w[a_1,\dots, a_{|w|}].
\end{equation}
For $w=1$ we get $\cc P^{-1}=\cc'$. Since
$$\cc A^w[a_1,\dots, a_{|w|}]P^{-1}(PA^x(b)P^{-1}-A'^x(b))=0$$
for all $a_i,b\in\cA$ implies
$$\cc A^w[a_1,\dots, a_{|w|}]P^{-1}(PA^xP^{-1}-A'^x)=0$$
by Lemma \ref{l:difvar} and $\cW=\cA^{1\times n}$, we get $PA^xP^{-1}=A'^x$. Finally, since
$$\cc A^w[a_1,\dots, a_{|w|}]P^{-1}(Pc-c')=0$$
for all $a_i\in\cA$, we get $Pc=c'$.

Uniqueness follows from the fact that any transition matrix satisfies \eqref{e4}.
\end{proof}

\begin{cor}\label{c:cba}
Every minimal representation is reduced. Every totally reduced representation is minimal.
\end{cor}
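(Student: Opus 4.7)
The plan is to deduce both implications from Theorem \ref{t:redmin} together with the first step of the reduction algorithm in the proof of Proposition \ref{p:red}; no new machinery is needed.

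The second assertion is essentially immediate. If $(\cc,A,\bb)$ is totally reduced then by definition $\cU_\infty^L=\cU_\infty^R=0$, so the quantity $\delta=\dim_D\cU_\infty^L+\dim_D\cU_\infty^R$ equals $0<m^2$, and Theorem \ref{t:redmin}(2) yields minimality at once.

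For the first assertion I would argue the contrapositive: a non-reduced representation is not minimal. Suppose $(\cc,A,\bb)$ has dimension $n$ and, say, that $\cU_\infty^L$ is not a torsion module; the case where $\cU_\infty^R$ fails to be torsion is handled symmetrically by the right-hand version of the construction in Proposition \ref{p:red}. By the Artin-Wedderburn facts recalled in Subsection \ref{subs:sa}, a finitely generated left $\cA$-module fails to be torsion exactly when it contains a nonzero $\cA$-linearly independent element (equivalently, a nonzero free direct summand). Consequently a maximal $\cA$-linearly independent subset $\{\uu_{k+1},\dots,\uu_n\}\subset\cU_\infty^L$ satisfies $k<n$. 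Extending this set to an $\cA$-basis of $\cA^{1\times n}$ and running the first left-hand step of the algorithm of Proposition \ref{p:red} then produces a genuine representation of $S$ of dimension $k<n$, contradicting the assumed minimality of $(\cc,A,\bb)$.

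The only point that deserves careful spelling out — and the main obstacle to a sloppy reading — is the equivalence between ``torsion'' and ``contains no nonzero $\cA$-linearly independent element'' for finitely generated left modules over $\cA\cong M_m(D)$. This rests on the fact (listed in Subsection \ref{subs:sa}) that every submodule is a direct summand: an independent element $\vv$ generates $\cA\vv\cong\cA$ as a free summand, while conversely a free direct summand of $\cA$-dimension $\geq 1$ supplies a generator with trivial left annihilator. Once this equivalence is in hand, the proof reduces to a one-line invocation of the machinery already in place.
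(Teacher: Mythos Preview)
Your proof is correct and follows the same approach as the paper: the second assertion is exactly Theorem \ref{t:redmin}(2) applied with $\delta=0$, and the first is the contrapositive extracted from the reduction step in the proof of Proposition \ref{p:red}. The paper's own proof is a two-line citation of those results; you have simply unpacked the implicit reasoning (including the torsion $\Leftrightarrow$ no independent element equivalence), which is accurate and adds no new ideas.
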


\begin{proof}
The first part holds by Proposition \ref{p:red}, and the second part is true by Theorem \ref{t:redmin}(2). 
\end{proof}

We end this subsection by observing that totally reduced representations are preserved if the algebra $\cA$ is ampliated.

\begin{lem}\label{l:newsize}
Let $\cA$ be a simple Artinian algebra and $\cA\subseteq \cB$. If $(\cc,A,\bb)$ is a totally reduced representation over $\cA$, then it is also totally reduced over $\cB$.
\end{lem}

\begin{proof}
Since $(\cc,A,\bb)$ is totally reduced, the sets
\begin{equation}\label{e:sets}
\{\cc A^w[a_1,\dots, a_{|w|}]\colon w\in \gp, \ a_i\in\cA\},\qquad \{A^w[a_1,\dots, a_{|w|}]\bb\colon w\in \gp, \ a_i\in\cA\}
\end{equation}
generate $\cA^{1\times n}$ and $\cA^{n\times 1}$, respectively, by Lemma \ref{l:dimbound}. In particular, the $\cA$-linear spans of sets \eqref{e:sets} contain the standard bases of $\F^{1\times n}$ and $\F^{n\times 1}$, respectively. But then the right and left obstruction module of $(\cc,A,\bb)$ as a representation over $\cB$ are trivial.
\end{proof}


\subsection{Hankel module}\label{subs:hankel}

In this subsection let $\cA=M_m(\F)$.

In the classical realization theory, i.e., $m=1$, to each formal series $S$ one can assign the Hankel matrix $H_S$ (\cite[Section 2.1]{BR} or \cite[Section 10]{BGM}). Namely, $H_S$ is the infinite matrix over $\F$ with rows and columns indexed by words over $\ulx$, whose $(u,v)$-entry equals the coefficient of the word $uv$ in $S$ for every $u,v\in\gs$. The important result  is that $S$ is rational if and only if $H_S$ has finite row rank, 
see e.g. \cite[Theorem 2.6.1]{BR}. By the definition of a stable module in 
\cite[Section 1.5]{BR}, we see that the rows of $H_S$ are generators of the smallest stable module containing $S$, so the rank of $H_S$ equals the minimal cardinality of a generating set of the smallest stable module containing $S$.

Now let $m\in\N$ be arbitrary. From our definition of stability it is not a priori evident that the smallest stable left $\cA$-submodule containing $S$ even exists; namely, it is not clear whether the intersection of stable submodules is again stable.

Recall the map $L:\gp\to\End_{\cA-\cA}(\ags)$ defined by \eqref{e:opL}. For $x\in \ulx$ and $a\in\cA$ we now define closely related operators
\begin{equation}\label{e:opLa}L_{x,a}S={}_a(L_xS),\end{equation}
where ${}_aT$ means that we replace the first letter with $a$ in every term of the series $T$.

\begin{lem}\label{l:geostable}
Let $x\in \ulx$, $S\in\ags$ and $\cM$ be a left $\cA$-submodule in $\ags$. If $L_{x,e_{ij}}S\in\cM$ for all $1\le i,j\le m$, then $L_xS\in\cA^x\cM$.
\end{lem}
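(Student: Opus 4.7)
The plan is to decompose $L_xS$ using the matrix unit basis $\{e_{ij}\}$ of $\cA=M_m(\F)$ over $\F$, and then extract each basis component using the hypothesis.

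Write the expansion $L_xS=\sum_t a_t\,x\,T_t$, where $a_t\in\cA$ is the coefficient immediately preceding the leading letter $x$ and $T_t\in\ags$ is the remaining tail. By the definition of $L_{x,a}$ the hypothesis then reads
$$L_{x,e_{ij}}S=\sum_t a_t\,e_{ij}\,T_t\in\cM\qquad\text{for all }i,j.$$
Expand each coefficient in the matrix units: $a_t=\sum_{k,l}c_t^{kl}e_{kl}$ with $c_t^{kl}\in\F$. Since the scalars $c_t^{kl}$ are central and commute past $x$, setting $Y_{kl}:=\sum_t c_t^{kl}T_t\in\ags$ gives the reorganization
$$L_xS=\sum_{k,l}e_{kl}\,x\,Y_{kl}.$$
Because $\cM$ is a left $\cA$-module we have $\cA^x\cM=\cA x\cA\cdot\cM=\cA x\cM$, so it suffices to show $Y_{kl}\in\cM$ for every pair $k,l$.

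For the extraction step, I would substitute the same decomposition into $L_{x,e_{ij}}S$ and use $e_{kl}e_{ij}=\delta_{li}e_{kj}$ to obtain
$$L_{x,e_{ij}}S=\sum_k e_{kj}\,Y_{ki}\in\cM.$$
Multiplying on the left by $e_{jk'}$, using $e_{jk'}e_{kj}=\delta_{kk'}e_{jj}$, and summing over $j$ yields
$$\sum_j e_{jk'}\,L_{x,e_{ij}}S=\Bigl(\sum_j e_{jj}\Bigr)Y_{k'i}=Y_{k'i}\in\cM,$$
the membership following because $\cM$ is closed under left multiplication by $\cA$. Letting $i$ and $k'$ vary recovers every $Y_{kl}$ in $\cM$, whence $L_xS=\sum_{k,l}e_{kl}\,x\,Y_{kl}\in\cA x\cM=\cA^x\cM$.

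There is no serious obstacle here beyond careful indexing with matrix units; the crucial structural input is simply that $\{e_{ij}\}$ spans $\cA$ over the central field $\F$, and that a left $\cA$-module is closed under the Peirce-type extraction $\sum_j e_{jk'}(\,\cdot\,)$.
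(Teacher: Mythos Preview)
Your proof is correct and shares the paper's core idea: exploit the matrix-unit basis $\{e_{ij}\}$ of $\cA=M_m(\F)$ over $\F$. The paper organizes the computation slightly differently: rather than extracting tail components, it writes down the candidate
\[
S'=\sum_{i,j=1}^m\sum_{k=1}^m e_{ki}\,x\,e_{jk}\,L_{x,e_{ij}}S\in\cA^x\cM
\]
directly, checks that ${}_{e_{ij}}(L_xS-S')=0$ for all $i,j$, and then invokes Lemma~\ref{l:difvar} to conclude $L_xS=S'$. Your route is marginally more elementary in that it avoids the appeal to Lemma~\ref{l:difvar}, replacing it with the explicit Peirce-type extraction $Y_{k'i}=\sum_j e_{jk'}\,L_{x,e_{ij}}S$. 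The two arguments are in substance the same computation read in opposite directions: expanding the paper's $S'$ using your identity $L_{x,e_{ij}}S=\sum_k e_{kj}Y_{ki}$ collapses it exactly to your decomposition $\sum_{k,l}e_{kl}\,x\,Y_{kl}$.
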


\begin{proof}
Let
$$S'=\sum_{i,j=1}^m\sum_{k=1}^m e_{ki}xe_{jk}L_{x,e_{ij}}S.$$
By assumption, $S'\in \cA^x\cM$. By the way we defined this series, we have ${}_{e_{ij}}(L_xS-S')=0$ for all $1\le i,j\le m$. Therefore
${}_a[L_xS-S',w]=0$ for all $a\in \cA$ and $w\in \gp$, so $[L_xS-S',w]=0$ by Lemma \ref{l:difvar}.
\end{proof}

\begin{cor}\label{c:minstable}
For every $S\in\ags$ there exists the smallest stable left $\cA$-submodule containing $S$. It is generated by the series
$$L_{x_{i_1},e_{i_1j_1}}\cdots L_{x_{i_\ell},e_{i_\ell j_\ell}}S,$$
where $\ell\in\N\cup\{0\}$ and $1\le i_\ell,j_\ell\le m$.
\end{cor}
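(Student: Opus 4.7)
The plan is to exhibit the conjectured smallest stable submodule concretely and verify both properties directly. Let $\cM\subseteq\ags$ denote the left $\cA$-submodule generated by the family of series displayed in the statement, indexed over $\ell\ge0$ and admissible choices of indices; this is the candidate for the smallest stable submodule containing $S$, and I must check that (i) $\cM$ is stable, and (ii) every stable left $\cA$-submodule $\cN\subseteq\ags$ containing $S$ contains $\cM$.

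For stability of $\cM$, fix $x\in\ulx$ and any generator $T=L_{x_{i_1},e_{i_1j_1}}\cdots L_{x_{i_\ell},e_{i_\ell j_\ell}}S$. By Lemma \ref{l:geostable}, to conclude $L_xT\in\cA^x\cM$ it suffices to show $L_{x,e_{ij}}T\in\cM$ for every $1\le i,j\le m$; but each $L_{x,e_{ij}}T$ is itself a generator of $\cM$ by construction. Since $L_x$ is left $\cA$-linear as a value of the bimodule homomorphism $L$, for an arbitrary element $\sum_\alpha a_\alpha T_\alpha\in\cM$ we get $L_x(\sum_\alpha a_\alpha T_\alpha)=\sum_\alpha a_\alpha L_xT_\alpha\in\cA\cdot\cA^x\cM\subseteq\cA^x\cM$, so $\cM$ is stable.

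For minimality, I would induct on $\ell$. The base case $\ell=0$ is $S\in\cN$. For the inductive step, let $T'=L_{x_{i_2},e_{i_2j_2}}\cdots L_{x_{i_\ell},e_{i_\ell j_\ell}}S$, which lies in $\cN$ by hypothesis. Stability of $\cN$ yields a decomposition $L_{x_{i_1}}T'=\sum_k p_kU_k$ with $p_k\in\cA^{x_{i_1}}$ and $U_k\in\cN$. Writing $p_k=\sum_\alpha a_{k,\alpha}x_{i_1}b_{k,\alpha}$ with $a_{k,\alpha},b_{k,\alpha}\in\cA$, every term of the series $p_kU_k$ carries $x_{i_1}$ as its first letter since $p_k$ is homogeneous of degree one in $x_{i_1}$, hence
$${}_{e_{i_1j_1}}(p_kU_k)=\Bigl(\sum_\alpha a_{k,\alpha}e_{i_1j_1}b_{k,\alpha}\Bigr)U_k\in\cA\cdot U_k\subseteq\cN.$$
Summing over $k$ gives $L_{x_{i_1},e_{i_1j_1}}T'={}_{e_{i_1j_1}}(L_{x_{i_1}}T')\in\cN$, completing the induction. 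The main subtlety is precisely this letter-substitution step: one must verify that replacing the leading $x_{i_1}$ by the scalar $e_{i_1j_1}\in\cA$ in the product series $p_kU_k$ really produces an element of $\cA\cdot U_k$, and this is exactly where the degree-one homogeneity of $p_k\in\cA^{x_{i_1}}$ is essential. Everything else is bookkeeping.
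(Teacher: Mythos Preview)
Your proof is correct and follows essentially the same approach as the paper: stability of the proposed submodule via Lemma~\ref{l:geostable}, and minimality by showing that the operators $L_{x,e_{ij}}$ preserve any stable submodule. The paper compresses the minimality direction to the single word ``obviously''; your inductive argument with the explicit computation ${}_{e_{ij}}(p_kU_k)=p_k(e_{ij})\,U_k$ is exactly the content hidden behind that word, and your observation that the degree-one homogeneity of $p_k\in\cA^x$ is what makes the letter-substitution land back in $\cA\cdot U_k$ is the right point to flag.
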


\begin{proof}
The proposed submodule is obviously contained in every stable submodule $\cM$ containing $S$. On the other hand, it is stable by Lemma \ref{l:geostable}.
\end{proof}

The module of Corollary \ref{c:minstable} is called the \emph{Hankel module} of the series $S$ and is denoted $\cH_S$, following the analogy with Hankel matrices described in the first paragraph.

\begin{thm}\label{t:hankel}
Let $S\in\ags$ and let $\cH_S$ be its Hankel module. Then $\cH_S$ is finitely generated as a left $\cA$-module if and only if $S$ is recognizable.

If this is the case, then the minimal cardinality of a generator set of $\cH_S$ equals the dimension of a minimal representation of $S$.
\end{thm}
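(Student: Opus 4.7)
The plan is to prove both parts by leveraging Proposition \ref{p:stable} together with the Artin-Wedderburn structure reviewed in Subsection \ref{subs:sa}. For the first equivalence, one direction is immediate: since $\cH_S$ is stable by construction (Corollary \ref{c:minstable}), if it is finitely generated then $S$ is recognizable by Proposition \ref{p:stable}. For the converse, I would fix any representation $(\cc,A,\bb)$ of $S$ of dimension $n$, form the finitely generated stable module $\cM=\sum_i \cA S_i$ with $[S_i,w]=(A^w\bb)_i$ as in the proof of Proposition \ref{p:stable}, and conclude $\cH_S\subseteq\cM$. Finite generation of $\cH_S$ then follows because $\cA=M_m(\F)$ is Artinian and hence Noetherian, so submodules of finitely generated modules are finitely generated.

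For the numerical statement, denote by $n_0$ the dimension of a minimal representation of $S$ and by $\mu$ the minimal cardinality of an $\cA$-generating set of $\cH_S$, and prove the two inequalities separately. To get $n_0\le\mu$, I would take any minimal generating set $T_1,\dots,T_\mu$ of $\cH_S$, write $S=\sum_i c_iT_i$, and use stability to produce matrices $A^x\in(\cA^x)^{\mu\times\mu}$ with $L_xT_i=\sum_jA^x_{ij}T_j$. Setting $\bb_i=[T_i,1]$ and $\cc=(c_1,\dots,c_\mu)$, an induction on $|w|$ should yield $[T_i,w]=(A^w\bb)_i$, so that $(\cc,A,\bb)$ is a representation of $S$ of dimension $\mu$. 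This is essentially the converse direction of the proof of Proposition \ref{p:stable}, packaged to exhibit a representation whose dimension is precisely $\mu$.

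For the reverse inequality $\mu\le n_0$, take a minimal representation $(\cc,A,\bb)$ and form $\cM=\sum_{i=1}^{n_0}\cA S_i$ as above, giving $\cH_S\subseteq\cM$. Each cyclic summand $\cA S_i$ has $\F$-dimension at most $\dim_\F\cA=m^2$, so $\dim_\F\cH_S\le m^2n_0$. Since $\cA=M_m(\F)$ is semisimple with unique simple module $\F^m$, one has $\cH_S\cong(\F^m)^{k'}$ for a unique $k'$ satisfying $mk'\le m^2n_0$, i.e., $k'\le mn_0$. A direct count (splitting $k'=mq+r$ with $0\le r<m$ and generating each summand $(\F^m)^m\cong\cA$ or $(\F^m)^r$ by one element) shows that $(\F^m)^{k'}$ can be generated by $\lceil k'/m\rceil\le n_0$ elements, so $\mu\le n_0$.

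The main technical point I anticipate is in the induction of the $n_0\le\mu$ step: the stability relation $L_xT_i=\sum_jA^x_{ij}T_j$ amounts to an identity of series whose coefficient at a word $xw'$ agrees with $(A^xA^{w'}\bb)_i$ only after substituting elements of $\cA$ into the first letter. To upgrade this to the full coefficient identity $[T_i,xw']=(A^xA^{w'}\bb)_i$ in $\cA^{xw'}$ I would evaluate by means of the operators $L_{x,a}$ introduced in \eqref{e:opLa} and invoke Lemma \ref{l:difvar} to pass from substitution-wise to formal equality. Everything else is careful bookkeeping with minimal generating sets in the semisimple setting, relying on the facts recalled in Subsection \ref{subs:sa}.
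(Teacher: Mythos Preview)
Your argument is correct and follows essentially the same outline as the paper: both directions of the equivalence and the inequality $n_0\le\mu$ come from Proposition \ref{p:stable} exactly as you describe. For $\mu\le n_0$ the paper takes a slightly more direct route: rather than embedding $\cH_S$ into $\cM$ and then appealing to a dimension count over $\F$, it identifies $\cH_S$ explicitly as $\cW_0\big(I_n-\sum_x A^x\big)^{-1}\bb$, where $\cW_0\subseteq\cA^{1\times n}$ is the left $\cA$-module of Lemma \ref{l:dimbound}. This exhibits $\cH_S$ as a homomorphic image of a submodule of $\cA^{1\times n}$, which by the structure facts in Subsection \ref{subs:sa} is generated by at most $n$ elements, giving $\mu\le n_0$ at once. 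Your argument via $\cH_S\subseteq\cM$ and the semisimple count $\mu=\lceil k'/m\rceil$ reaches the same conclusion with one extra step; both approaches ultimately rest on the fact that over $M_m(\F)$ any subquotient of a module with $n$ generators again has at most $n$ generators.

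One remark: the technical difficulty you anticipate does not actually arise. The stability condition $L_xT_i\in\cA^x\cH_S$ already means $L_xT_i=\sum_jA^x_{ij}T_j$ as an equality of series in $\ags$ with $A^x_{ij}\in\cA^x$ (since the $T_j$ generate $\cH_S$ over $\cA$ and $\cA^x\cdot\cA\subseteq\cA^x$). Comparing coefficients at a word $xw'$ gives $[T_i,xw']=\sum_jA^x_{ij}[T_j,w']$ directly, and the induction proceeds without any appeal to Lemma \ref{l:difvar} or the operators $L_{x,a}$; this is exactly the computation in the $(\Leftarrow)$ half of the proof of Proposition \ref{p:stable}.
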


\begin{proof}
($\Rightarrow$) Since $\cH_S$ is stable, this is a special case of 
Proposition \ref{p:stable}; from its proof it can be also deduced that if $\cH_S$ is generated by $n$ elements, then $S$ has a representation of dimension $n$.

($\Leftarrow$) Let $n$ be the dimension of a representation $(\cc,A,\bb)$ of $S$. By the description of $\cA$-modules in Subsection \ref{subs:sa}, the submodule 
$\cW_0\subseteq \cA^{1\times n}$ as in \eqref{e:W0W} from Lemma \ref{l:dimbound} can be generated 
by $n$ elements. Comparing the definition of $\cW_0$ with the generators of $\cH_S$ in \ref{c:minstable} it can be observed that
$$\cH_S=\cW_0 \left(I_n-\sum_x A^x\right)^{-1}\bb,$$
which finishes the proof.
\end{proof}


\section{Realizations of noncommutative rational functions}\label{sec5}

In this section we apply the theory of linear representations from previous sections to nc rational functions with coefficients in $\F$. 

First we introduce realizations about points (Corollary \ref{c:realization}) and prove the existence of totally reduced realizations (Theorem \ref{t:totred}). Similarly to \cite{KVV1}, we describe the domain of a rational function using its totally reduced realization and furthermore show that the dimension of a minimal realization about a point is an invariant of the rational function, i.e., it is independent of the choice of the point of expansion. We call it the Sylvester degree of a rational function.


\subsection{Definition of realization}\label{subs:def}

We are now in position to explain the meaning of \eqref{e:sylvester} from the introduction. If $r\in \re$ is defined at $\ulp\in \cA^g$, then $S=r(x_1+p_1,\dots,x_g+p_g)$ is a generalized rational expression as in Subsection \ref{subs:eval} and therefore also a rational series in $\ags$. We call it \emph{the expansion of $r$ about $\ulp$}. By Theorem \ref{t:equiv}, $S$ has linear representation. Its entries belong to the rational closure of $\{p_1,\dots,p_d\}$ and $\F$ in $\cA$.

\begin{cor}\label{c:realization}
Let $r\in \re$ and $\ulp\in\dom_m r$. If $\cA\subseteq M_m(\F)$ is the unital $\F$-algebra generated by $p_1,\dots,p_g$, then there exist $n\in \N$, 
$\cc\in \cA^{1\times n}$, $\bb\in \cA^{n\times 1}$ and $A^{x_j}\in (\cA^{x_j})^{n\times n}$ such that
\begin{equation}\label{e:prereal}
r(\ulx+\ulp)=\cc\left(I_n-\sum_{j=1}^g A^{x_j}\right)^{-1}\bb.
\end{equation}
\end{cor}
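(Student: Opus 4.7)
The plan is to exhibit the required representation by structural induction on the rational expression $r$, applying the explicit constructions of Theorem \ref{t:arithmetic}. Setting $S:=r(\ulx+\ulp)$, the identity \eqref{e:prereal} will follow from Theorem \ref{t:equiv} (combined with Lemma \ref{l:mat}) once we produce a linear representation $(\cc,A,\bb)$ of $S$ with $\cc\in\cA^{1\times n}$, $\bb\in\cA^{n\times 1}$ and $A^{x_j}\in(\cA^{x_j})^{n\times n}$, because the representation then automatically satisfies $S=\sum_w\cc A^w\bb = \cc(I_n-\sum_j A^{x_j})^{-1}\bb$.

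The base cases are atomic. A scalar $\alpha\in\F$ corresponds to the one-dimensional representation $(\alpha,0,1)$; a letter $z_j$, whose expansion about $\ulp$ is $x_j+p_j\in\agp$, is handled by formula \eqref{a1} with $a=p_j\in\cA$, which is a two-dimensional representation with all entries in $\cA$ (respectively in $\cA^{x_j}$). For the inductive step on sums and products of subexpressions that are already defined at $\ulp$, the block constructions \eqref{a2} and \eqref{a3} manifestly preserve the property that all entries lie in $\cA$ and the $x_j$-slots in $\cA^{x_j}$.

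The genuine obstacle is the inverse step: if a subexpression $r_0$, defined at $\ulp$, already carries a representation $(\cc_0,A_0,\bb_0)$ over $\cA$ of its expansion $S_0$, then formula \eqref{a4} for a representation of $S_0^{-1}$ requires $a^{-1}\in\cA$ for $a=[S_0,1]=r_0(\ulp)$. Since $r_0^{-1}$ is defined at $\ulp$, the element $a$ is invertible in $M_m(\F)$; the issue is that $\cA$ is an arbitrary unital $\F$-subalgebra, not a priori closed under taking such inverses. To close the gap I will use the fact that $\cA$ is finite-dimensional over $\F$: the element $a$ is a non-zero-divisor in $M_m(\F)$, hence left multiplication $L_a\colon\cA\to\cA$ is injective and therefore surjective, yielding $c\in\cA$ with $ac=1$. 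This $c$ must agree with the inverse of $a$ in $M_m(\F)$, so $a^{-1}\in\cA$, and \eqref{a4} then provides a representation of $S_0^{-1}$ whose entries still lie in $\cA$. Iterating through every subexpression of $r$ eventually produces the desired representation of $S$ itself.
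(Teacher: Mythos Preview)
Your proof is correct and follows the same route the paper intends: the corollary is presented as an immediate consequence of Theorem~\ref{t:equiv} together with the recursive constructions of Subsection~\ref{subs:arith}, and the sentence preceding the corollary (``Its entries belong to the rational closure of $\{p_1,\dots,p_g\}$ and $\F$ in $\cA$'') is exactly your structural induction in compressed form. The one point the paper leaves implicit and you spell out is why the rational closure of $\{p_1,\dots,p_g\}\cup\F$ inside $M_m(\F)$ coincides with the unital subalgebra $\cA$ they generate; your finite-dimensionality argument (invertible in $M_m(\F)$ $\Rightarrow$ left multiplication on $\cA$ is injective $\Rightarrow$ surjective $\Rightarrow$ inverse lies in $\cA$) is precisely what is needed to justify the inverse step \eqref{a4}, and hence to pass from ``rational closure'' to ``$\cA$'' in the statement.
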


If \eqref{e:prereal} holds, we say that the linear representation $(\cc,A,\bb)$ is a \emph{realization of $r$ of dimension $n$ about a point $\ulp\in M_m(\F)^g$}.

\begin{rem}\label{r:bound}
Suppose we are given $r\in\re$. Then the content of Subsection \ref{subs:arith} yields a realization of $r$ about any point in $\dom r$ and thus an upper bound on the minimal dimension of a realization for $r$, even if we do not know any elements of $\dom r$. We say that this representation is obtained by \emph{standard construction}. Moreover, we can easily calculate this particular upper bound, denoted $\varkappa(r)$, just by counting the occurrences of symbols in the string determining $r$:
$$\varkappa(r)=\# \text{constant\_terms}+2\cdot \# \text{letters}+\# \text{inversions}.$$
\end{rem}

Equality \eqref{e:prereal} implies that for every $s\in\N$ and 
$\ulq\in\dom_{sm} r$ we have
$$r(\ulq)=\iota(\cc)\left(I_{smn}-\sum_{j=1}^g \iota(A^{x_j})(q_j-\iota(p_j))\right)^{-1}\iota(\bb),$$
where $\iota$ is the entry-wise applied embedding $M_m(\F)\to M_{sm}(\F)$ given by $a\mapsto I_s\otimes a$. For this reason we usually write
\begin{equation}\label{e:real}
r=\cc\left(I_n-\sum_{j=1}^g A^{x_j}(z_j-p_j)\right)^{-1}\bb
\end{equation}
instead of \eqref{e:prereal}. Also, since $A^x$ can be written as a finite sum of terms of the form $C x B$ for $C,B\in M_m(\F)^{n\times n}$ (more precisely, $\dim_{\F} M_m(\F)^{n\times n}=(mn)^2$ terms suffice), we can rewrite \eqref{e:real} as
$$r=\cc\left(I_n-\sum_{j=1}^g\sum_k C_{jk}(z_j-p_j)B_{jk}\right)^{-1}\bb$$
with inner sums being finite and $C_{jk},B_{jk}\in M_m(\F)^{n\times n}$,
which is the \emph{Sylvester realization} \eqref{e:sylvester} mentioned in the introduction.

\begin{exa}\label{e:invcomm}
Consider $r=(z_1z_2-z_2z_1)^{-1}$. Let $(p_1,p_2)$ be an arbitrary pair of elements from some simple Artinian ring, whose commutator $p_1p_2-p_2p_1$ is invertible, and set $q=r(p_1,p_2)$. If $x_i=z_i-p_i$, then
$$r(z_1,z_2)=q(1-(p_2x_1-x_1p_2)q-(x_2p_1-p_1x_2)q-(x_2x_1-x_1x_2)q)^{-1}.$$
It is easy to verify that
\begin{equation}\label{e:favourite}
r=\cc\Big(I_3-C_1(z_1-p_1)B_1-C_2(z_2-p_2)B_2\Big)^{-1}\bb,
\end{equation}
where
$$
\cc=\begin{pmatrix}q &0&0\end{pmatrix},\quad
C_1=\begin{pmatrix}
-1 &p_2 &0 \\
0 &0 &0 \\
0 &-1 &0
\end{pmatrix},\quad
B_1=\begin{pmatrix}
p_2q & -1 &0 \\
q &0 &0 \\
0 &0 &0
\end{pmatrix},
$$
$$
C_2=\begin{pmatrix}
1 &0 &-p_1 \\
0 &0 &-1 \\
0 &0 &0
\end{pmatrix},\quad
B_2=\begin{pmatrix}
p_1q &0 &-1 \\
0 &0 &0 \\
q &0 &0
\end{pmatrix},
\quad \bb=\begin{pmatrix}1\\0\\0\end{pmatrix}.
$$
Therefore $r$ has a realization of dimension $3$. On the other hand, Remark \ref{r:bound} guarantees only a realization of dimension $\varkappa(r)=9$.

Moreover, the realization \eqref{e:favourite} is totally reduced 
(and therefore also minimal) since 
$$\{\cc,\cc C_1B_1,\cc C_2B_2\}\qquad \text{and}\qquad \{\bb,C_1B_1\bb,C_2B_2\bb\}$$
are bases for $\cA^{1\times 3}$ and $\cA^{3\times 1}$, respectively.
\end{exa}

Let $r_1,r_2\in\re$ determine the same rational function and let $\ulp\in\dom r_1\cap \dom r_2$. If $S_i$ is the expansion of $r_i$ about $\ulp$ for $i\in\{1,2\}$, then $S_1=S_2$ by Proposition \ref{p:zeroseries}. Therefore we can define a \emph{realization of a nc rational function $\rr\in\rf$ about a point $\ulp\in \dom \rr$} as the realization of $r\in\re$ about $\ulp$, where $r$ is an arbitrary rational expression representing $\rr$ that is defined at $\ulp$.

In the following subsections we concentrate on matrix-valued evaluations of nc rational functions. Therefore set $\cA=M_m(\F)$ for the rest of the section.


\subsection{Existence of totally reduced realizations}

In this subsection we prove the existence of totally reduced realizations of a nc rational function about ``almost every'' point from in domain; in fact, it is shown that we already have the means of producing these realizations.

Recall the generic $m\times m$ matrices $T_k$ and the generic division algebra $\ud\subset M_m(\F(t))$ generated by them from Section \ref{sec2}.

\begin{lem}\label{l:locglob}
Assume $\vv_1,\dots, \vv_k\in M_m(\F(t))^{1\times n}$ are left $M_m(\F(t))$-linearly independent. Then there exists a nonempty Zariski open set $\cO\subseteq \cA^g$ such that for 
$\ulxi\in\cO$, the rows $\vv_1(\ulxi),\dots, \vv_k(\ulxi)\in M_m(\F)^{1\times n}$ are left 
$M_m(\F)$-linearly independent.
\end{lem}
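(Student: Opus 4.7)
The plan is to reduce the claim to the nonvanishing of a single polynomial in the entries of $\ulxi$. First, assemble $\vv_1,\dots,\vv_k$ as the rows of a matrix $V\in M_m(\F(\ult))^{k\times n}$ and let $\tilde V\in\F(\ult)^{km\times nm}$ be the ``unfolded'' matrix obtained by writing each $\vv_i$ as the stack of its $m$ constituent $\F(\ult)$-rows. A left $M_m(\F(\ult))$-linear combination $\sum_i c_i\vv_i$ is then precisely the matrix product $C\tilde V$, where $C\in\F(\ult)^{m\times km}$ is the analogous unfolding of $(c_1,\dots,c_k)\in M_m(\F(\ult))^{1\times k}$, and $C=0$ if and only if all $c_i=0$. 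Thus left $M_m(\F(\ult))$-linear independence of the $\vv_i$'s is equivalent to $\F(\ult)$-linear independence of the $km$ rows of $\tilde V$, which in particular forces $k\le n$ and makes $\tilde V$ of rank $km$ over $\F(\ult)$.

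Given this rank condition, I would pick a nonzero $km\times km$ minor $D\in\F(\ult)$ of $\tilde V$ and write it as $D=P/Q$ in lowest terms with $P,Q\in\F[\ult]$. Let $H\in\F[\ult]$ denote a common denominator of the entries of $\tilde V$. Then $HPQ\in\F[\ult]$ is a nonzero polynomial, and since $\F$ is infinite, the set
$$\cO=\{\ulxi\in\cA^g\colon (HPQ)(\ulxi)\ne 0\}$$
is a nonempty Zariski open subset of $\cA^g$. For $\ulxi\in\cO$, the condition $H(\ulxi)\ne 0$ makes $\tilde V(\ulxi)\in\F^{km\times nm}$ well-defined, and the $km\times km$ submatrix whose determinant gives $D$ evaluates to a matrix with determinant $P(\ulxi)/Q(\ulxi)\ne 0$. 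Hence $\tilde V(\ulxi)$ has $\F$-rank $km$, so reapplying the equivalence from the first paragraph over $\F$ in place of $\F(\ult)$ yields left $M_m(\F)$-linear independence of $\vv_1(\ulxi),\dots,\vv_k(\ulxi)$.

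The only step that needs a little care is the equivalence between left $M_m$-linear independence of vectors in $M_m^{1\times n}$ and $\F$-row-independence of the associated unfolded matrix; once that equivalence is in hand, the remainder is a standard ``generic nonvanishing'' argument and I do not anticipate further obstacles.
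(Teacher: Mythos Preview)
Your argument is correct and follows essentially the same idea as the paper's proof: pass from $M_m$-linear independence to ordinary linear independence of the $km$ constituent rows over the field $\F(\ult)$, reduce to a polynomial nonvanishing condition, and invoke that $\F$ is infinite. The only cosmetic difference is that the paper first extends $\{\vv_1,\dots,\vv_k\}$ to an $M_m(\F(\ult))$-basis of $M_m(\F(\ult))^{1\times n}$, assembles the full invertible matrix $V\in M_{nm}(\F(\ult))$, and takes $\cO$ to be the locus where both $V$ and $V^{-1}$ are defined; you instead work directly with the $km\times nm$ unfolded matrix and a single nonzero $km\times km$ minor. Both routes encode the same rank condition, and neither offers a real advantage over the other.
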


\begin{proof}
The set $\{\vv_1,\dots, \vv_k\}$ can be extended to a basis, so there exists an invertible matrix $V\in M_n(M_m(\F(t)))$ whose rows belong to this basis. Considering this matrix as an element of $M_{nm}(\F(t))$, it is clear that for every tuple $\ulxi$ over $\F$, such that $V$ and $V^{-1}$ are defined at $\ulxi$, the matrix $V(\ulxi)\in M_{nm}(\F)$ is also invertible. Note that the set of such tuples is Zariski open and nonempty since $\F$ is infinite. Finally, considering $V(\ulxi)$ as an element of $M_n(M_m(\F))$ finishes the proof.
\end{proof}

\begin{thm}\label{t:totred}
Assume $r\in\re$ is defined at some point in $\cA^g$. Then there exists a nonempty Zariski open set $\cO\subseteq \dom_m r$ such that for every $\ulp\in \cO$, the formal expansion $S$ of $r$ about $\ulp$ admits a totally reduced representation.

Furthermore, this representation can be obtained by applying the proof of Proposition \ref{p:red} to the standard construction of a representation of $S$.
\end{thm}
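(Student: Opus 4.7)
The plan is to run both the standard construction and the reduction algorithm at the generic tuple $\ulT$, and then specialize to a generic $\ulp\in M_m(\F)^g$. Set $\cA_T=M_m(\F(t))$ (the theory of Section~\ref{sec4} applies verbatim to $\cA_T$ with $\F(t)$ in place of the base field). Since $r$ is defined somewhere in $M_m(\F)^g$, it is nondegenerate, so every inverted subexpression arising in the standard construction (Remark~\ref{r:bound}, Theorem~\ref{t:arithmetic}) of $r(\ulx+\ulT)\in\cA_T\gs$ is invertible in the generic division algebra $UD_{m;g}\subset\cA_T$. A straightforward induction on the expression $r$ then shows that this standard construction produces a representation $(\cc,A,\bb)$ of dimension $n=\varkappa(r)$ of $r(\ulx+\ulT)$ with $\cc\in UD_{m;g}^{1\times n}$, $\bb\in UD_{m;g}^{n\times 1}$, and each entry of every $A^{x_j}$ in $UD_{m;g}\,x_j\,UD_{m;g}$.

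I next apply Proposition~\ref{p:red} to $(\cc,A,\bb)$ over $\cA_T$ and claim that the algorithm terminates after a single cycle with a totally reduced representation. By Corollary~\ref{c:redfree}, it suffices to check that $\cU_\infty^L$ and, after the first left reduction, $\cU'^R_\infty$ are free $\cA_T$-modules. For $\cU_\infty^L$: by Lemma~\ref{l:termination} it is the kernel of a left $\cA_T$-linear map $\varphi$ whose coefficients lie in $UD_{m;g}$. The restriction of $\varphi$ to $UD_{m;g}^{1\times n}$ has kernel $\tilde\cU$ that is free as a left $UD_{m;g}$-module, say of rank $k$, since $UD_{m;g}$ is a division ring. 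Since $\cA_T$ is flat as a right $UD_{m;g}$-module (every module over a division ring is free), flat base change identifies $\cU_\infty^L$ with $\cA_T\otimes_{UD_{m;g}}\tilde\cU$, a free $\cA_T$-module of rank $k$. A $UD_{m;g}$-basis of $\tilde\cU$ extends first to a $UD_{m;g}$-basis of $UD_{m;g}^{1\times n}$ and then to an $\cA_T$-basis of $\cA_T^{1\times n}$, so the change-of-basis matrix $U$ in the algorithm can be chosen in $\mathrm{GL}_n(UD_{m;g})$; the left-reduced representation therefore still has its entries in $UD_{m;g}$, and the same freeness argument then applies to $\cU'^R_\infty$. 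Corollary~\ref{c:redfree} thus yields a totally reduced representation $(\cc',A',\bb')$ of $r(\ulx+\ulT)$ over $\cA_T$ whose matrix entries and whose two change-of-basis matrices all lie in $UD_{m;g}$.

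The last step is specialization. All entries of $\cc',A',\bb'$ and of the two change-of-basis matrices from the generic algorithm lie in $UD_{m;g}\subset M_m(\F(t))$, so they are defined as functions of $\ulp$ on a nonempty Zariski-open subset of $M_m(\F)^g$; intersecting with $\dom_m r$ and the (open) locus where both change-of-basis matrices remain invertible at $\ulp$, and applying Lemma~\ref{l:locglob} to the finitely many left and right linear-independence conditions used in the generic algorithm, yields a nonempty Zariski-open $\cO\subseteq\dom_m r$ on which those independences persist upon specialization. For $\ulp\in\cO$, running the algorithm of Proposition~\ref{p:red} at $\ulp$ with the specialized basis vectors chosen as the required maximal linearly independent subsets reproduces the specialization of $(\cc',A',\bb')$; by upper semi-continuity of kernel dimensions, the $\F$-dimensions of the obstruction modules of this specialized representation are bounded above by the corresponding $\F(t)$-dimensions for $(\cc',A',\bb')$, which vanish, so the specialization is totally reduced.

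The main obstacle is the freeness argument of the second paragraph. Over $\cA=M_m(\F)$ the examples after Definition~\ref{d:red} show reduced representations need not be totally reduced, precisely because torsion obstruction modules are in general not free. Passing to the generic tuple places the representation's coefficients inside the division ring $UD_{m;g}$, which forces the obstruction modules to be free and lets Corollary~\ref{c:redfree} finish the job in one cycle; the rest is routine specialization bookkeeping via Lemma~\ref{l:locglob}.
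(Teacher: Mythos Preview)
Your proof follows essentially the same strategy as the paper's: run the standard construction and the reduction at the generic tuple $\ulT$, exploit that $UD_{m;g}$ is a division ring so the relevant obstruction modules are free and Corollary~\ref{c:redfree} finishes the reduction in one cycle, then specialize via Lemma~\ref{l:locglob}. Two differences in execution are worth noting. First, the paper simply takes $\cA=UD_{m;g}$ and runs Section~\ref{sec4} there; since this is already a division ring, every module is free and your flat base-change detour through $\cA_T=M_m(\F(t))$ is unnecessary. Second, for the specialization the paper tracks, besides bases $B^{(1)},B^{(2)}$ of $\cU_\infty^L,\cU'^R_\infty$, also bases $B^{(3)},B^{(4)}$ of their right/left annihilators $\cV,\cV'$; the dimension identities $|B^{(1)}|+|B^{(3)}|=n$ and $|B^{(2)}|+|B^{(4)}|=n'$ then force the \emph{specialized} obstruction modules $\tilde\cU_\infty^L,\tilde\cU'^R_\infty$ to be free with $B^{(1)}(\ulp),B^{(2)}(\ulp)$ as bases. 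This is exactly what is needed for the ``furthermore'' clause: it certifies that the specialized vectors are \emph{maximal} linearly independent in $\tilde\cU_\infty^L$, so that choosing them is a legitimate run of the algorithm at $\ulp$. Your upper semi-continuity step, as written, is applied only to the final $(\cc',A',\bb')$ and therefore shows the specialization is totally reduced, but does not by itself justify maximality at the intermediate stage; this small gap is easy to close (apply the same semi-continuity to the map defining $\cU_\infty^L$ for the unreduced representation), and is precisely where the paper's $\cV,\cV'$ enter.
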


\begin{proof}
By assumption, $r$ is also defined at the tuple $\ulT=(T_1,\dots,T_g)$ of $m\times m$ generic matrices over $\F$. Let $(\cc,A,\bb)$ be the standard construction of realization of $S$ about 
$\ulT$; then the standard construction of realization of $S$ about arbitrary $\ulp\in\dom_m r\cap\cA^g$ is just the evaluation of $(\cc,A,\bb)$ at $\ulp$, which we denote $(\tilde{\cc},\tilde{A},\tilde{\bb})$.

Since $\ud$ is a division algebra, all modules over $\ud$ are free, hence the reduction of $(\cc,A,\bb)$ ends after two steps. Let $n'$ and $\cU'^R_\infty$ be the dimension and the right obstruction module of representation after the first step of reduction process, respectively. Set
\begin{align*}
\cV&=\{\vv \in \cA^{n\times 1}\colon \uu\vv =0\ \forall \uu\in \cU_\infty^L\},\\
\cV'&=\{\vv \in \cA^{1\times n'}\colon \vv\uu =0\ \forall \uu\in \cU'^R_\infty\}.
\end{align*}
Let $B^{(1)},B^{(2)},B^{(3)},B^{(4)}$ be some bases of free $\ud$-modules $\cU_\infty^L,\cU'^R_\infty,\cV,\cV'$, respectively. Note that
\begin{equation}\label{e:dimeq}
|B^{(1)}|+|B^{(3)}|=n,\quad |B^{(2)}|+|B^{(4)}|=n'.
\end{equation}
Let $\cO_i\subseteq \dom_m r \cap \cA^g$ be the set of all tuples at which the set $B^{(i)}$ evaluates to a linearly independent set $B^{(i)}(\ulp)$ as in Lemma \ref{l:locglob}. 
Set $\cO=\cO_1\cap \cO_2\cap\cO_3\cap \cO_4$. For each $p\in \cO$, we have
$$B^{(1)}(\ulp)\subset \tilde{\cU}_\infty^L,\ B^{(2)}(\ulp)\subset \tilde{\cU}'^R_\infty, \ 
B^{(3)}(\ulp)\subset \tilde{\cV}, \ B^{(4)}(\ulp)\subset \tilde{\cV}'.$$
By equalities \eqref{e:dimeq} the $\cA$-modules $\tilde{\cU}_\infty^L$ and $\tilde{\cU}'^R_\infty$ are free. Therefore $(\tilde{\cc},\tilde{A},\tilde{\bb})$ reduces to totally reduced representation by Corollary \ref{c:redfree}.
\end{proof}


\subsection{Stable extended domain and Sylvester degree of a nc rational function}

The aim of this subsection is to describe the stable extended domain of a nc rational function using totally reduced realizations and to show that the dimension of its minimal realizations about a point is actually independent of the chosen point. The latter makes it possible to define the Sylvester degree of a nc rational function.

We start by introducing the right-hand version of the map $L$ defined by \eqref{e:opL} and the operators $L_{x,a}$ defined by \eqref{e:opLa}. That is, we define 
$R:\gp\to \End_{\cA-\cA}(\ags)$ by
$$R_vS=\sum_{w\in\gp}[S,wv]$$
and
$$R_{x,a}S=(R_xS)_a$$
for $a\in\cA$, where $T_a$ means that we replace the last letter in every term of series $T$ with $a$. By \eqref{e:L} and its right-hand analog we get
\begin{equation}\label{e:shift1}
\begin{split}
L_{x,a}(ST)=(L_{x,a}S)T+[S,1]L_{x,a}T, \quad &L_{x,a}U^{-1}=-[U,1]^{-1}(L_{x,a}U)U^{-1},\\
R_{x,a}(ST)=(R_{x,a}S)[T,1]+SR_{x,a}T, \quad &R_{x,a}U^{-1}=-U^{-1}(R_{x,a}U)[U,1]^{-1}.
\end{split}
\end{equation}

The linearity of $L_{x,a}$ and $R_{x,a}$ and the equalities \eqref{e:shift1} imply the following: if $S$ is a gr expression (as defined is Subsection \ref{subs:eval}), then $L_{x,a}S$ and $R_{x,a}S$ are also gr expressions and their domains include the domain of $S$. Moreover, if $(\cc,A,\bb)$ is a representation of $S$, then $(\cc A^x(a),A,\bb)$ and $(\cc,A,A^x(a)\bb)$ are representations of 
$L_{x,a}S$ and $R_{x,a}S$, respectively.

Lemma \ref{l:shift2}, Corollary \ref{c:shift3} and Theorem \ref{t:sing} and their proofs closely follow \cite[Theorem 2.19, Corollary 2.20, Theorem 3.1]{KVV1} and \cite[Lemma 3.4, Theorem 3.5]{Vol}.

\begin{lem}\label{l:shift2}
Let $S$ be a gr expression, $\ulp\in\cA^g$ a point in its domain and $\ulq\in\cA^g$ an arbitrary point. Then
\begin{equation}\label{e:kvv}
S\left(\begin{bmatrix}\ulp & 0\\ \ulq & 0\end{bmatrix}\right)=
\begin{bmatrix}S(\ulp) & 0\\ \sum_{i=1}^g(L_{x_i,q_i}S)(\ulp) & S(0)\end{bmatrix}.
\end{equation}
\end{lem}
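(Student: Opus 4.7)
The plan is to argue by structural induction on the gr expression $S$, viewing it as built from constants in $\cA$ and letters $x_1,\dots,x_g$ using the rational operations. Throughout I will denote the tuple of block matrices by $\ulP$, the off-diagonal sum on the right-hand side by $\alpha(S):=\sum_{i=1}^g(L_{x_i,q_i}S)(\ulp)$, and the desired block matrix by $\Theta(S):=\begin{pmatrix} S(\ulp) & 0 \\ \alpha(S) & S(0) \end{pmatrix}$. The key algebraic tools are the Leibniz-type identities \eqref{e:shift1} for $L_{x,a}$, paired with the standard block-triangular inversion formula.

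For the base cases: if $S=a\in\cA$, the convention $\iota(a)=I_2\otimes a$ gives $S(\ulP)=\mathrm{diag}(a,a)$, while $S(\ulp)=S(0)=a$ and $L_{x_i,q_i}a=0$ produce the same block matrix. If $S=x_i$, the LHS equals $P_i$, while $S(\ulp)=p_i$, $S(0)=0$, and $L_{x_j,q_j}x_i=\delta_{ij}q_j$, giving $\alpha(x_i)=q_i$, so $\Theta(x_i)=P_i$. The sum case is immediate since both $S\mapsto S(\ulP)$ and $S\mapsto\Theta(S)$ are additive (the latter because $L_{x,a}$ is linear). For the product $S=S_1S_2$, induction yields $S(\ulP)=\Theta(S_1)\Theta(S_2)$, and block multiplication gives
\[
\Theta(S_1)\Theta(S_2)=\begin{pmatrix} S_1(\ulp)S_2(\ulp) & 0 \\ \alpha(S_1)\,S_2(\ulp)+S_1(0)\,\alpha(S_2) & S_1(0)S_2(0) \end{pmatrix}.
\]
The bottom-left entry matches $\alpha(S_1S_2)$ because, evaluating $L_{x_j,q_j}(S_1S_2)=(L_{x_j,q_j}S_1)S_2+[S_1,1]L_{x_j,q_j}S_2$ from \eqref{e:shift1} at $\ulp$ and using $[S_1,1]=S_1(0)$, summing over $j$ produces precisely this expression.

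For the inverse case $S=U^{-1}$, we have $[U,1]=U(0)\in\cA$ invertible (because $U^{-1}$ is a valid gr expression) and $U(\ulp)$ invertible (because $\ulp\in\dom S$). By induction $U(\ulP)=\Theta(U)$, and the block lower-triangular inversion formula yields
\[
U(\ulP)^{-1}=\begin{pmatrix} U(\ulp)^{-1} & 0 \\ -U(0)^{-1}\,\alpha(U)\,U(\ulp)^{-1} & U(0)^{-1} \end{pmatrix}.
\]
The bottom-left entry equals $\alpha(U^{-1})$ because the identity $L_{x_j,q_j}U^{-1}=-[U,1]^{-1}(L_{x_j,q_j}U)U^{-1}$ from \eqref{e:shift1}, evaluated at $\ulp$ and summed over $j$, gives exactly $-U(0)^{-1}\alpha(U)U(\ulp)^{-1}$. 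The same argument applied inductively to every sub-expression $V^{-1}$ of $S$ shows that the diagonal blocks $V(\ulp),V(0)$ of $V(\ulP)=\Theta(V)$ are both invertible, so $V(\ulP)$ is itself invertible and the block-triangular inversion formula is legitimate; consequently $\ulP\in\edom S$ and all evaluations along the induction are well defined.

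The plan is largely bookkeeping: the only step requiring real care is the inverse, where I need to invoke the block-triangular inversion formula and simultaneously verify that the relevant diagonal blocks are invertible so that the induction proceeds. I expect no technical obstacle beyond matching signs in the Leibniz-style identities \eqref{e:shift1}; there is no need to invoke the generic evaluation or Proposition \ref{p:zeroseries} because the gr expression $S$ is treated syntactically and the identities \eqref{e:shift1} hold as identities of series.
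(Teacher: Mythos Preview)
Your proof is correct and follows essentially the same approach as the paper: a structural induction on the gr expression using the Leibniz-type identities \eqref{e:shift1} for the product and inverse cases, together with block lower-triangular multiplication and inversion. Your treatment is slightly more explicit than the paper's in verifying that the diagonal blocks $V(\ulp)$ and $V(0)$ are invertible at each inverse sub-expression (so that $\ulP$ actually lies in $\dom_{2m} S$ and the inversion formula applies), but the argument is otherwise identical.
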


\begin{proof}
We proceed inductively using \eqref{e:shift1}. The claim obviously holds if $S$ is a constant or a letter. If the claim holds for $S_1$ and $S_2$, it also holds for any linear combination of $S_1$ and $S_2$ by linearity of the operators $L_{x,a}$. Moreover, it holds for their product:
\begin{align*}
(S_1S_2)\left(\begin{bmatrix}\ulp & 0\\ \ulq & 0\end{bmatrix}\right)
&=S_1\left(\begin{bmatrix}\ulp & 0\\ \ulq & 0\end{bmatrix}\right)
S_2\left(\begin{bmatrix}\ulp & 0\\ \ulq & 0\end{bmatrix}\right) \\
&=\begin{bmatrix}S_1(\ulp) & 0\\ \sum_{i=1}^d(L_{x_i,q_i}S_1)(\ulp) & S_1(0)\end{bmatrix}
\begin{bmatrix}S_2(\ulp) & 0\\ \sum_{i=1}^d(L_{x_i,q_i}S_2)(\ulp) & S_2(0)\end{bmatrix} \\
&=\begin{bmatrix}S_1(\ulp)S_2(\ulp) & 0\\
\sum_{i=1}^g\left((L_{x_i,q_i}S_1)(\ulp)S_2(\ulp)+S_1(0)(L_{x_i,q_i}S_2)(\ulp)\right)
 & S_1(0)S_2(0)\end{bmatrix} \\
&=\begin{bmatrix}(S_1S_2)(\ulp) & 0\\ \sum_{i=1}^g(L_{x_i,q_i}(S_1S_2))(\ulp) & (S_1S_2)(0)\end{bmatrix}.
\end{align*}
If the claim holds for an invertible series $S$, then
\begin{align*}
S^{-1}\left(\begin{bmatrix}\ulp & 0\\ \ulq & 0\end{bmatrix}\right)
&=S\left(\begin{bmatrix}\ulp & 0\\ \ulq & 0\end{bmatrix}\right)^{-1} \\
&=\begin{bmatrix}S(\ulp)^{-1} & 0\\ 
-S(0)^{-1}\left(\sum_{i=1}^g(L_{x_i,q_i}S)(\ulp)\right) S(\ulp)^{-1}
 & S(0)^{-1}\end{bmatrix} \\
&=\begin{bmatrix}S^{-1}(\ulp) & 0\\ \sum_{i=1}^g(L_{x_i,q_i}S^{-1})(\ulp) & S(0)^{-1}\end{bmatrix}. \qedhere 
\end{align*}
\end{proof}

\begin{cor}\label{c:shift3}
The stable extended domain of a rational series $S$ is included in the stable extended domain of 
$L_{x,a}S$ and $R_{x,a}S$ for every $x\in \ulx$ and $a\in\cA$.
\end{cor}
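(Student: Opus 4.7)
The plan is to apply Lemma \ref{l:shift2} at the generic level and read off the relevant block. Fix $s \in \N$ and $\ulp \in \edom_{ms} S$, and let $\ulT = (T_1, \dots, T_g)$ denote the tuple of generic $ms \times ms$ matrices, so that $S(\ulT) \in M_{ms}(\F(\ult))$ is well-defined and its entries are regular at the specialization $\ulT \to \ulp$.

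For $x = x_j \in \ulx$ and $a \in \cA$, the first step is to invoke Lemma \ref{l:shift2} after enlarging the base algebra from $\cA = M_m(\F)$ to $M_{ms}(\F(\ult))$. The tuple $\ulT$ then plays the role of the scalar point ``$\ulp$'' of the lemma, and it lies in the domain of $S$ over this enlarged base precisely by our hypothesis. Taking the perturbation $\ulq := (0, \dots, 0, \iota(a), 0, \dots, 0)$ with $\iota(a) = I_s \otimes a$ in slot $j$, the lemma produces the identity
\[
S\!\left(\begin{bmatrix} T_i & 0 \\ \delta_{ij}\iota(a) & 0 \end{bmatrix}\right) = \begin{bmatrix} S(\ulT) & 0 \\ (L_{x, a} S)(\ulT) & S(0) \end{bmatrix}
\]
of matrices in $M_{2ms}(\F(\ult))$.

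The second step is to verify that the LHS is well-defined after specialization $\ulT \to \ulp$, which will force every block on the RHS to be regular at $\ulp$. Tracing the inductive evaluation of the LHS through the gr expression of $S$, at each inverse step one inverts a block matrix of the form $\begin{bmatrix} U(\ulT) & 0 \\ * & U(0) \end{bmatrix}$, whose determinant factors as $\det U(\ulT)\cdot \det U(0)$. The factor $\det U(0)$ is a non-zero constant (since $S$ is a well-formed gr expression, so $[U,1]$ is invertible in $\cA$), while $\det U(\ulT)$ does not vanish at $\ulp$ precisely because $\ulp \in \edom_{ms} S$. Hence the LHS specializes well, so the $(2,1)$-block $(L_{x, a} S)(\ulT)$ is regular at $\ulp$, giving $\ulp \in \edom_{ms}(L_{x, a} S)$.

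The case of $R_{x, a}$ is handled symmetrically, using the right-hand analogue of Lemma \ref{l:shift2} (derived from the second line of \eqref{e:shift1}) with the transposed block structure $\begin{bmatrix} T_i & \delta_{ij}\iota(a) \\ 0 & 0 \end{bmatrix}$, and reading off the $(1,2)$-block. The single delicate point in the plan is the base change for Lemma \ref{l:shift2}: one must observe that its inductive proof is purely formal and remains valid over $M_{ms}(\F(\ult))$ in place of $\cA$. This is routine, but it is the one step that reaches beyond the literal statement of what precedes the corollary.
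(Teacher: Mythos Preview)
Your overall approach matches the paper's: apply Lemma \ref{l:shift2} with the generic tuple $\ulT$ in place of the base point, obtain the block identity over $\F(\ult)$, argue that the left-hand side is regular at $\ulp$, and read off the $(2,1)$-block. The paper does exactly this, simply asserting ``Since the left-hand side is defined in $p$'' and then handling $R_{x,a}$ by the symmetric version of the lemma.

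The one substantive issue is your justification of the specialization step. You write that ``$\det U(\ulT)$ does not vanish at $\ulp$ precisely because $\ulp \in \edom_{ms} S$,'' for each inverted sub-expression $U$ of the chosen gr expression. This conflates $\dom_{ms}$ with $\edom_{ms}$: the hypothesis $\ulp \in \edom_{ms} S$ only guarantees that the entries of $S(\ulT)$, \emph{in lowest terms}, are regular at $\ulp$; it says nothing about the intermediate determinants arising in one particular gr-expression computation of $S$. For a concrete failure, take the gr expression $(1-x_1)(1-x_1)^{-1}$, which represents the constant series $1$ (so $\edom_{ms} S$ is all of $M_{ms}(\F)^g$), and pick $\ulp$ with $p_1 = I$: the sub-expression $U = 1-x_1$ then has $\det U(\ulp) = 0$. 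Thus your determinant-tracking argument, as written, only establishes the weaker inclusion $\dom_{ms} S \subseteq \edom_{ms}(L_{x,a}S)$, not the stated corollary. The paper is terse at precisely this point and does not give your (incorrect) reason; whatever argument one supplies here must operate at the level of the reduced commutative rational functions (which depend only on the series $S$, not on a chosen gr expression), rather than by tracking sub-expression determinants of a fixed gr expression.
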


\begin{proof}
By the definition of the stable extended domain it suffices to prove $\edomst S\subseteq \edom L_{x,a}S$. Assume $\ulp\in\edomstn{ms} S$. Since $\edomst S$ is closed under direct sums, we have $\ulp\oplus 0\in\edomstn{2ms}S$. If $\ulq\in\cA^g$ be arbitrary, then the properties of arithmetic operations on block lower triangular matrices imply
$$\begin{bmatrix}\ulp & 0\\ \ulq & 0\end{bmatrix}\in\edom_{2ms}S.$$
Evaluation of \eqref{e:kvv} on a tuple of generic $m\times m$ matrices $\ulT=(T_1,\dots,T_g)$ yields
\begin{equation}\label{e:kvv1}
S\left(\begin{bmatrix}\ulT & 0\\ \ulq & 0\end{bmatrix}\right)=
\begin{bmatrix}S(\ulT) & 0\\ \sum_{i=1}^g(L_{x_i,q_i}S)(\ulT) & S(0)\end{bmatrix}.
\end{equation}
Both sides of \eqref{e:kvv1} are $2ms\times 2ms$ matrices of rational functions in $g(ms)^2$ commuting variables. Since the left-hand side is defined in $p$, the same holds for the right-hand side. Let $j\in\{1,\dots,g\}$ and $a\in\cA$ be arbitrary; then considering $\ulq=(0,\dots,a,\dots,0)$ with $a$ on position $j$ implies that $\ulp\in\edom_{ms} L_{x_i,a}S$.

Statement for $R_{x,a}S$ is proved analogously using the right-hand version of Lemma \ref{l:shift2}.
\end{proof}

\begin{thm}\label{t:sing}
Let $S$ be a rational series and assume it admits a totally reduced representation $(\cc, A, \bb)$ of dimension $n$. Then
$$\edomstn{m} S=\left\{\ulp\in \cA^g\colon
\det\left(I_{mn}-\sum_{j=1}^g A^{x_j}(p_j)\right)\neq0\right\}.$$
\end{thm}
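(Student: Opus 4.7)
The plan is to prove the two inclusions separately; the inclusion $\supseteq$ is routine, while $\subseteq$ is the substantive direction and relies crucially on total reducedness. For $\supseteq$, suppose $\det\bigl(I_{mn}-\sum_j A^{x_j}(p_j)\bigr)\neq 0$. By the realization, the generic evaluation of $S$ at the tuple $\ulT$ of generic $m\times m$ matrices equals $\cc\bigl(I_{mn}-\sum_j A^{x_j}(\ulT)\bigr)^{-1}\bb$; by Cramer's rule, every entry of this inverse is a commutative rational function whose denominator divides $\det\bigl(I_{mn}-\sum_j A^{x_j}(\ulT)\bigr)$. Since the latter polynomial does not vanish at $\ulp$, all entries of $S(\ulT)$ are defined at $\ulp$, and thus $\ulp\in\edom_m S$.

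For the reverse inclusion, assume $\ulp\in\edom_m S$ and write $M(\ulT):=I_{mn}-\sum_j A^{x_j}(\ulT)$. The strategy is to show that every entry of $M(\ulT)^{-1}\in M_{mn}(\F(\ult))$ is defined at $\ulp$; a one-line final step then promotes this to invertibility of $M(\ulp)$. Iterating Corollary \ref{c:shift3}, every iteratively shifted series of the form $L_{x_{i_1},a_1}\cdots L_{x_{i_k},a_k}R_{x_{j_1},b_1}\cdots R_{x_{j_l},b_l}S$ has $\ulp$ in its extended domain. Since applying $L_{x,a}$ and $R_{x,a}$ to a series with representation $(\cc,A,\bb)$ yields representations $(\cc\,A^x(a),A,\bb)$ and $(\cc,A,A^x(a)\,\bb)$ respectively, a representation of such a shifted series takes the form $(\cc',A,\bb')$ with $\cc'\in\lin_\cA\{\cc\,A^w[a_1,\dots,a_{|w|}]\}$ and $\bb'\in\lin_\cA\{A^w[b_1,\dots,b_{|w|}]\,\bb\}$. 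Its generic evaluation $\cc'\,M(\ulT)^{-1}\,\bb'$ is therefore defined at $\ulp$.

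The total reducedness hypothesis now enters decisively. By Lemma \ref{l:dimbound} and its right-sided analogue, the vanishing of $\cU_\infty^L$ and $\cU_\infty^R$ forces, via a dimension count over $D$, the equalities $\lin_\cA\{\cc\,A^w[a_1,\dots,a_{|w|}]:w\in\gp,\,a_i\in\cA\}=\cA^{1\times n}$ and $\lin_\cA\{A^w[b_1,\dots,b_{|w|}]\,\bb:w\in\gp,\,b_i\in\cA\}=\cA^{n\times 1}$. Consequently, for arbitrary $\uu\in\cA^{1\times n}$ and $\vv\in\cA^{n\times 1}$, the bilinear expression $\uu\,M(\ulT)^{-1}\,\vv$ is a finite $\cA$-linear combination of the generic evaluations above, hence is defined at $\ulp$. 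Ranging $\uu$ and $\vv$ over matrix units of $\cA$ extracts every entry of $M(\ulT)^{-1}$ as an element of $M_{mn}(\F(\ult))$, each defined at $\ulp$. Evaluating the identity $M(\ulT)\,M(\ulT)^{-1}=I_{mn}$ at $\ulp$ then yields $M(\ulp)\,M(\ulp)^{-1}=I_{mn}$, so $M(\ulp)$ is invertible and $\det M(\ulp)\neq 0$.

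The main obstacle is the passage from the local hypothesis $\ulp\in\edom_m S$ to the global conclusion that $M(\ulT)^{-1}$ is entry-wise defined at $\ulp$. This is precisely where total reducedness is indispensable: it supplies the spanning property that reduces an arbitrary bilinear form $\uu\,M(\ulT)^{-1}\,\vv$ to a finite combination of generic evaluations of shifted series of $S$, each of which is controlled by Corollary \ref{c:shift3}. Without total reducedness, the shifted series would cover only a strict $\cA$-submodule of $\cA^{1\times n}\times\cA^{n\times 1}$, and certain entries of $M(\ulT)^{-1}$ could potentially develop singularities at $\ulp$ that are not visible from $S(\ulT)$ alone.
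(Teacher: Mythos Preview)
Your proof is correct and follows essentially the same approach as the paper's: iterate Corollary~\ref{c:shift3} on left and right shifts, invoke the spanning conclusion of Lemma~\ref{l:dimbound} (and its right analogue) furnished by total reducedness to show every entry of $M(\ulT)^{-1}$ is regular at $\ulp$, and finish with an invertibility argument. The only cosmetic differences are that the paper applies $L$-shifts first and then $R$-shifts to the resulting column sequentially rather than simultaneously, and concludes via $\det Q\cdot\det M(\ulT)=1$ rather than $M(\ulT)M(\ulT)^{-1}=I_{mn}$; both are equivalent.
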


\begin{proof}
The inclusion $\supseteq$ is obvious. Conversely, assume $\ulp\in\edomstn{m} S$. By Corollary \ref{c:shift3}, it also belongs to the stable extended domain of 
$L_{x_{i_l},a_l}\cdots L_{x_{i_1},a_1} S$ for all combinations of letters and elements of $\cA$. Note that this series can be written as
$$\cc A^{x_{i_1}}(a_1)\cdots A^{x_{i_l}}(a_l)\left(I_n-\sum_{j=1}^g A^{x_j}\right)^{-1}\bb.$$
Since $\cU_\infty^R=0$, the rows $\cc A^{x_{i_1}}(a_1)\cdots A^{x_{i_l}}(a_l)$ generate left $\cA$-module $\cA^{1\times n}$
by Lemma \ref{l:dimbound}. Therefore $\ulp$ belongs to the stable extended domains of the entries in the column
$$\left(I_n-\sum_{j=1}^g A^{x_j}\right)^{-1}\bb.$$
Now we apply $R_{x,a}$ simultaneously on the entries of this column; by the same reasoning as above, $\ulp$ lies in the stable extended domains of the entries in the matrix
$$\left(I_n-\sum_{j=1}^g A^{x_j}\right)^{-1}.$$
Let $Q=\left(I_n-\sum_{j=1}^g A^{x_j}(T_j)\right)^{-1}$; since $\ulp$ is a regular point for every entry in $Q$, it is also a regular point for $\det Q$. Since
$$\det Q\det\left(I_{mn}-\sum_{j=1}^g A^{x_j}(T_j)\right)=1$$
and $\ulp$ is a regular point for both sides of this equation, we can apply it to $p$ and obtain
\[
\det\left(I_{mn}-\sum_{j=1}^g A^{x_j}(p_j)\right)\neq0.
\qedhere \]
\end{proof}

\begin{cor}\label{c:dom}
Let $\rr\in\rf$, $\ulp\in\dom_m \rr$ and $(\cc,A,\bb)$ its totally reduced realization about $\ulp$. Then
$$\edomstn{ms} \rr=\left\{\ulq\in M_{ms}(\F)^g\colon
\det\left(I_{msn}-\sum_{j=1}^g \iota(A^{x_j})(q_j-\iota(p_j))\right)\neq0\right\}$$
for all $s\in\N$, where $\iota:M_m(\F)\to M_{ms}(\F)$ is the embedding $a\mapsto I_s\otimes a$.
\end{cor}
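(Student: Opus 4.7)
The plan is to reduce this to Theorem \ref{t:sing} via the ampliation Lemma \ref{l:newsize}. Let $S$ be the expansion of $\rr$ about $\ulp$, a rational series over $M_m(\F)$ with totally reduced representation $(\cc, A, \bb)$. Applying $\iota$ entry-wise to $\cc$, $\bb$ and each $A^{x_j}$ produces a linear representation $(\iota(\cc), \iota(A), \iota(\bb))$ of dimension $n$ over $M_{ms}(\F)\cong M_s(M_m(\F))$; the series $\tilde{S}$ it realizes satisfies $[\tilde{S}, w] = \iota([S, w])$ for every $w \in \gp$. By Lemma \ref{l:newsize} this ampliated representation is again totally reduced, now viewed over the base algebra $M_{ms}(\F)$.

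Next I connect $\tilde{S}$ back to $\rr$. Because the only difference between evaluating $S$ at $\ulxi \in M_{ms}(\F)^g$ (via the convention fixed in Subsection \ref{subs:eval}) and evaluating $\tilde{S}$ at $\ulxi$ lies in whether the $M_m(\F)$-coefficients are passed through $\iota$ after or before substitution, the two evaluations agree. Combined with the defining property $S(\ulxi) = \rr(\ulxi + \iota(\ulp))$ of the expansion, this yields $\tilde{S}(\ulxi) = \rr(\ulxi + \iota(\ulp))$ wherever either side is defined, and hence
$$\edom_{ms}\rr = \iota(\ulp) + \edom_{ms}\tilde{S},$$
where the right-hand extended domain is computed with $\tilde{S}$ taken as a series over $M_{ms}(\F)$. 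Applying Theorem \ref{t:sing} to $\tilde{S}$ with $M_{ms}(\F)$ in place of $M_m(\F)$ and $ms$ in place of $m$ then gives
$$\edom_{ms}\tilde{S} = \left\{\ulxi \in M_{ms}(\F)^g : \det\left(I_{msn} - \sum_{j=1}^g \iota(A^{x_j})(\xi_j)\right) \neq 0\right\},$$
and the substitution $\ulxi = \ulq - \iota(\ulp)$ yields the claim.

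The main delicacy is bookkeeping between three closely related objects: the rational function $\rr$, the series $S$ over $M_m(\F)$ obtained by expansion about $\ulp$, and the ampliated series $\tilde{S}$ over $M_{ms}(\F)$. One must verify carefully that evaluation at matrices of size $ms$ identifies $S$ with $\tilde{S}$ at the base level of the larger algebra, and that this identification commutes with translation by $\iota(\ulp)$. Proposition \ref{p:zeroseries} is implicit here, since it guarantees that $(\cc,A,\bb)$ determines the expansion $S$ unambiguously, so its ampliation genuinely realizes the same series one would obtain by directly expanding $\rr$ about $\iota(\ulp)$.
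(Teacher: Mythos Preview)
Your proof is correct and follows essentially the same route as the paper: reduce to Theorem \ref{t:sing} by ampliating the totally reduced realization via $\iota$, invoke Lemma \ref{l:newsize} to preserve total reducedness, and translate by $\iota(\ulp)$. The paper's own argument is terser but identical in structure; you have simply made the bookkeeping between $S$, $\tilde S$, and $\rr$ explicit.
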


\begin{proof}
Let $r\in \re$ a rational expression corresponding to $\rr$ and defined in $\ulp$. First observe that if $S$ is the expansion of $r$ about $\ulp$, then $\edomstn{m} r=\ulp+\edomstn{m}S$. The case $s=1$ is therefore a direct consequence of Theorem \ref{t:sing}. For arbitrary $s\in\N$, we consider $\ulp$ and $(\cc,A,\bb)$ as a point and a realization over $M_{ms}(\F)$ using inclusion $\iota$ and then apply Lemma \ref{l:newsize} and Theorem \ref{t:sing}.
\end{proof}

\begin{thm}\label{t:indepmin}
Let $\rr\in\rf$. Then the dimension of its minimal realization does not depend on the choice of the expansion point $\ulp\in\dom \rr$.
\end{thm}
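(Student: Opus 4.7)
The plan is to show (i) that $n_\ulq$, the dimension of a minimal realization of $\rr$ at $\ulq\in\dom_M\rr$, is constant in $\ulq$ for each fixed $M$, and (ii) that the resulting common value $N_M$ does not depend on $M$, via the ampliation Lemma \ref{l:newsize}. Combining these gives the result: for $\ulp\in\dom_m\rr$ and $\ulp'\in\dom_{m'}\rr$ one would then have $n_\ulp=N_m=N_{m'}=n_{\ulp'}$.

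The main tool for (i) is a \emph{re-expansion lemma} I will establish first: given a realization $(\cc,A,\bb)$ of $\rr$ at $\ulq$ of dimension $n$, substituting $z_j=q'_j+(z_j-q'_j)$ in the realization identity
$$\rr=\cc\Bigl(I_n-\sum_j A^{x_j}(z_j-q_j)\Bigr)^{-1}\bb$$
and factoring the resulting matrix yields, at every $\ulq'$ for which $M_{\ulq,\ulq'}:=I_n-\sum_j A^{x_j}(q'_j-q_j)$ is invertible, a new realization
$$\rr=\cc\Bigl(I_n-\sum_j(M_{\ulq,\ulq'}^{-1}A^{x_j})(z_j-q'_j)\Bigr)^{-1}M_{\ulq,\ulq'}^{-1}\bb$$
of the \emph{same} dimension $n$ at $\ulq'$. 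Since $M_{\ulq,\ulq}=I_n$ and invertibility is Zariski-open, this immediately shows that $n$ is upper-semicontinuous on $\dom_M\rr$ and that $n_{\ulq'}\le n_\ulq$ on a nonempty Zariski-open neighborhood of $\ulq$.

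For (i) itself, Theorem \ref{t:totred} supplies, for each representative $r$ of $\rr$, a Zariski-open $\cO_r\subseteq\dom_M r$ on which the reduction algorithm produces totally reduced realizations; by Theorem \ref{t:redmin}(3) these share a common dimension, and since intersections $\cO_r\cap\cO_{r'}$ are dense in the irreducible open set $\dom_M\rr\subseteq M_M(\F)^g$, this dimension is independent of $r$. Call it $N_M$; by Corollary \ref{c:cba}, $n_{\tilde{\ulq}}=N_M$ for every $\tilde{\ulq}\in\bigcup_r\cO_r$, a Zariski-dense subset. For an arbitrary $\ulq\in\dom_M\rr$, re-expansion from a minimal realization at $\ulq$ to a nearby $\tilde{\ulq}\in\bigcup_r\cO_r$ yields $N_M\le n_\ulq$, while re-expansion from a suitable $\tilde{\ulq}\in\bigcup_r\cO_r$ back to $\ulq$ (with $\tilde{\ulq}$ chosen so that $M_{\tilde{\ulq},\ulq}$ is invertible, parametrizing the reference realization by the generic totally reduced realization over $UD_{M;g}$) gives $n_\ulq\le N_M$. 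For (ii), pick $\tilde{\ulp}\in\cO_r$ for some representative $r$ of $\rr$ defined at $\ulp$ together with its totally reduced realization of dimension $N_m$; by Lemma \ref{l:newsize}, ampliation via $\iota\colon M_m(\F)\to M_{mm'}(\F)$, $a\mapsto I_{m'}\otimes a$, yields a totally reduced realization at $\iota(\tilde{\ulp})\in\dom_{mm'}\rr$ of the same dimension $N_m$. Thus $n_{\iota(\tilde{\ulp})}=N_m$, and applying (i) in size $mm'$ gives $N_m=N_{mm'}$; symmetrically $N_{m'}=N_{mm'}$.

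The hardest step will be the upper bound $n_\ulq\le N_M$ in (i) at points $\ulq$ lying outside every $\cO_r$ (so that every generic totally reduced realization has a denominator vanishing at $\ulq$): one must delicately combine the openness of the invertibility condition on $M_{\tilde{\ulq},\ulq}$ with the density of $\bigcup_r\cO_r$ to produce a reference point $\tilde{\ulq}$ from which the re-expansion back to $\ulq$ is well-defined.
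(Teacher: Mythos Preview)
Your overall plan---the re-expansion lemma, the use of Theorem \ref{t:totred} to obtain totally reduced realizations on a dense open set, and ampliation via Lemma \ref{l:newsize}---matches the paper's approach, and the direction $N_M\le n_\ulq$ (re-expanding a minimal realization at $\ulq$ outward to some $\tilde{\ulq}\in\cO_r$) works exactly as you describe, since the invertibility locus of $M_{\ulq,\cdot}$ is Zariski-open and contains $\ulq$ itself.

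The gap is precisely in what you flag as ``the hardest step'', the bound $n_\ulq\le N_M$. You propose to find $\tilde{\ulq}\in\bigcup_r\cO_r$ with $M_{\tilde{\ulq},\ulq}$ invertible by parametrizing the reference realization by the generic totally reduced realization over $UD_{M;g}$ and then ``delicately combining openness with density''. But openness together with density does not yield nonemptiness: the set $\{\tilde{\ulq}:\det M_{\tilde{\ulq},\ulq}\neq 0\}$ is Zariski-open in $\tilde{\ulq}$, yet you have not shown it is nonempty. The natural witness $\tilde{\ulq}=\ulq$ (where formally $M_{\ulq,\ulq}=I$) is unavailable, because the entries of the generic totally reduced realization are rational functions of $\tilde{\ulq}$ that need not be defined at $\ulq$ when $\ulq\notin\cO_r$; and the standard (unreduced) realization, while defined at $\ulq$, has the wrong dimension $\varkappa(r)$, so re-expanding \emph{that} one tells you nothing about $N_M$.

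The missing ingredient is Theorem \ref{t:sing} (equivalently Corollary \ref{c:dom}): if $(\cc_0,A_0,\bb_0)$ is a totally reduced realization at \emph{any} fixed $\ulp_0\in\cO_r$, then $\edom_M\rr$ is \emph{exactly} the set where $I-\sum_j A_0^{x_j}(\,\cdot\,-\ulp_0)$ is invertible. Since $\ulq\in\dom_M\rr\subseteq\edom_M\rr$, the matrix $M_{\ulp_0,\ulq}$ is automatically invertible---no search for a suitable $\tilde{\ulq}$ is needed---and re-expansion from $\ulp_0$ to $\ulq$ produces a realization at $\ulq$ of dimension $N_M$. This is how the paper closes the argument (its phrase ``using the same reasoning as before'' refers back to the invocation of Theorem \ref{t:sing}). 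The same device is what makes the paper's comparison between two totally reduced points of different matrix sizes work: after ampliating both to $M_{m_1m_2}(\F)$ via Lemma \ref{l:newsize}, Theorem \ref{t:sing} guarantees that each point lies in the invertibility locus of the other's realization, so re-expansion in both directions is possible. Your part (ii) ultimately relies on the same fact, since equating $n_{\iota(\tilde{\ulp})}$ with $N_{mm'}$ again requires the hard direction of (i) at the ampliated point.
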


\begin{proof}
It is enough to prove the statement for $r\in\re$ because the domains of any two elements of $\re$ have nonempty intersection.

We first consider points that admit totally reduced realizations of $r$. For $i\in\{1,2\}$, let $\ulp_i\in\dom_{m_i} r$ and $(\cc_i,A_i,\bb_i)$ a totally reduced realization of $r$ about $\ulp_i$ of dimension $n_i$. We need to prove $n_1=n_2$. Using Lemma \ref{l:newsize} and working over $M_{m_1m_2}(\F)$, we can assume $m_1=m_2=:m$. By Theorem \ref{t:sing}, the matrix
$$Q=I_{mn_1}-\sum_{j=1}^g A_1^{x_j}(p_{2j}-p_{1j})$$
is invertible. Then
\begin{align*}
\cc_1\left(I_{mn_1}-\sum_{j=1}^g A_1^{x_j}(z_j-p_{1j})\right)^{-1}\bb_1
&=\cc_1\left(I_{mn_1}-\sum_{j=1}^g A_1^{x_j}(z_j-p_{2j}+p_{2j}-p_{1j})\right)^{-1}\bb_1 \\
&=\cc_1\left(Q-\sum_{j=1}^g A_1^{x_j}(z_j-p_{2j})\right)^{-1}\bb_1 \\
&=\cc_1\left(I_{mn_1}-\sum_{j=1}^g (Q^{-1}A_1^{x_j})(z_j-p_{2j})\right)^{-1}(Q^{-1}\bb_1)
\end{align*}
and the latter is a realization about $\ulp_2$. Therefore $n_1\ge n_2$. By symmetry we get $n_2\ge n_1$.

Now let $\ulp\in \dom_m r$ be arbitrary with corresponding minimal realization $(\cc,A,\bb)$ of dimension $n$. Let $\cO$ be the nonempty Zariski open set from Theorem \ref{t:totred} and $n'$ the dimension of totally reduced realizations of $r$ about points from $\cO$. Choose arbitrary $\ulp_0\in\cO$; using the same reasoning as before we can use a totally reduced realization of $r$ about $\ulp_0$ to produce a realization of $r$ about $\ulp$; therefore $n\le n'$. On the other hand, the set
$$\left\{\ulp'\in M_m(\F)^g\colon
\det\left(I_{mn}-\sum_{j=1}^g A^{x_j}(p'_j-p_j)\right)\neq0\right\}$$
is Zariski open and nonempty, so its intersection with $\cO$ is nontrivial. Therefore there exists a point $\ulp'$ admitting totally reduced realization of $r$ such that the matrix
$$Q=I_{mn}-\sum_{j=1}^g A^{x_j}(p'_j-p_j)$$
is invertible. Hence we can get a realization about $p'$ of dimension $n$ by the same reasoning as at the beginning of the proof, which implies $n\ge n'$ because every totally reduced realization is minimal by Corollary \ref{c:cba}. 
\end{proof}

Theorem \ref{t:indepmin} justifies the following definition.

\begin{defn}
The \emph{Sylvester degree} of $\rr\in\rf$ is the dimension of some minimal realization of $\rr$ about an arbitrary point in its domain.
\end{defn}

A systems theory equivalent is called McMillan degree \cite[Section 8.3]{AM}. There are also other integer invariants that describe the complexity of a nc rational function, such as depth \cite[Section 7.7]{Co1} or inversion height \cite{Re}. However, we would like to emphasize the fact that the Sylvester degree can be effectively calculated since we obtain a reduced realization through the algorithm of Proposition \ref{p:red}.

\begin{rem}
Let $D$ be a division ring that is infinite dimensional over its center. In \cite{CR} the authors define the \emph{rank} of a nc rational function $\rr$ as the dimension of its minimal realization about a point in $D^g$. We give a sketch of the proof that the rank and the Sylvester degree coincide (in particular, the rank does not depend on $D$). By Theorem \ref{t:totred}, the Sylvester degree of $\rr$ is equal to the dimension on the minimal realization of $\rr$ about the tuple of generic $m\times m$ matrices; since $\ud$ is the relatively free skew field of degree $m$ \cite[Section 7.5]{Co}, the Sylvester degree of $\rr$ coincides with the minimal dimension of a realization of $\rr$ over any division ring of degree $m$. Recall that totally reduced realizations are preserved under inclusion by Lemma \ref{l:newsize} and that minimal realizations over division rings are totally reduced; since there exists a division ring containing both $D$ and some division ring of degree $m$ \cite[Lemma 15]{Ami}, the Sylvester degree and the rank coincide by \cite[Theorem 4.3]{CR}.
\end{rem}



\section{Applications}\label{sec6}

We conclude the paper with some applications of our results. In Corollary \ref{c:RIbound} we provide size bounds for rational identity test that do not rely on the information about realizations. Next we prove the local-global principle for the linear dependence of nc rational functions and obtain size bounds for testing linear dependence; see Theorem \ref{t:lg}. Finally, we consider symmetric rational expressions and symmetric realizations, following \cite{HMV}.


\subsection{Rational identity testing}\label{subs:RI}

An important aspect of the noncommutative rational expressions is recognizing the rational identities \cite{Ami, Be1,Be2}. As already mentioned in the proof of Proposition \ref{p:GPI}, it is well known that there are no nonzero polynomial identities on $M_s(\F)$ of polynomial degree less than $2s$. In this section, we derive a result of the same type for rational expressions.

\begin{thm}\label{t:RIbound}
Assume $r\in \re$ admits a realization of dimension $n$ about a point in $M_m(\F)^g$. If $r$ vanishes on $\dom_N r$ for $N=m\lceil\tfrac{mn}{2}\rceil$, then $r$ is a rational identity.
\end{thm}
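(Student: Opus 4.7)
The plan is to use the realization to reduce everything to Proposition~\ref{p:RSIbound}. Let $(\cc,A,\bb)$ be the given realization of dimension $n$ about $\ulp\in M_m(\F)^g$, and let $S=r(\ulx+\ulp)=\sum_w\cc A^w\bb\in M_m(\F)\gs$ be the corresponding series. If I can show $\cc A^w\bb=0$ for every $|w|<mn$, then Proposition~\ref{p:RSIbound} gives $S=0$ as a series, whence $r$ vanishes on its entire (extended) domain and is a rational identity. So the whole task is to squeeze this pointwise vanishing on the coefficients out of the hypothesis that $r$ vanishes on $\dom_N r$.

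To do so I use a scalar scaling. Write $N=ms$ with $s=\lceil mn/2\rceil$, and let $\tilde\ulp=\iota(\ulp)$ where $\iota:M_m(\F)\to M_N(\F)$, $a\mapsto I_s\otimes a$. Fix any $\ulq\in M_N(\F)^g$. Since $\dom_N r$ is Zariski open and contains $\tilde\ulp$, the set $\{t\in\F:\tilde\ulp+t\ulq\in\dom_N r\}$ is Zariski open and nonempty, hence cofinite in the infinite field $\F$. On this set the hypothesis gives $r(\tilde\ulp+t\ulq)=0$, while the realization writes
\[
r(\tilde\ulp+t\ulq)=\iota(\cc)\bigl(I_{nN}-tM(\ulq)\bigr)^{-1}\iota(\bb),\qquad M(\ulq):=\sum_{j=1}^g\iota(A^{x_j})(q_j),
\]
as a matrix of rational functions in the single variable $t$. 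Its identical vanishing in $t$ forces every Taylor coefficient $\iota(\cc)M(\ulq)^k\iota(\bb)$ to be zero. Expanding $M(\ulq)^k=\sum_{|w|=k}\iota(A^w)(\ulq)$, this reads $\iota(f_k)(\ulq)=0$ for the homogeneous generalized polynomial $f_k:=\sum_{|w|=k}\cc A^w\bb\in M_m(\F)\gp$ of degree $k$. Since $\ulq$ was arbitrary, $f_k$ vanishes on all of $M_N(\F)^g$.

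Finally I would appeal to a mild strengthening of Proposition~\ref{p:GPI}: if $f\in M_m(\F)\gp$ has degree $h$ and vanishes on $M_{ms'}(\F)^g$ for some $s'$ with $2s'>h$, then $f=0$. The proof of Proposition~\ref{p:GPI} establishes exactly this, since the size hypothesis is used only to produce a polynomial identity of degree $h$ on $M_{s'}(\F)$, and such an identity must be zero because $M_{s'}(\F)$ carries no nonzero identity of degree $<2s'$. Applied to $f_k$ with $s'=s=\lceil mn/2\rceil$ and $h=k<mn\le 2s$, this yields $f_k=0$ for every $k<mn$; since the summands in $f_k=\sum_{|w|=k}\cc A^w\bb$ have pairwise distinct multidegrees, $\cc A^w\bb=0$ for every $|w|<mn$, and the argument concludes as in the first paragraph. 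The only delicate point is this strengthening of Proposition~\ref{p:GPI}; everything else is a routine combination of the realization identity with Taylor expansion in a single scalar variable.
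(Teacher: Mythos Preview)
Your argument is correct and follows essentially the same route as the paper: both proofs insert a scalar dilation parameter to isolate the homogeneous components $f_k=\sum_{|w|=k}\cc A^w\bb$, invoke (the proof of) Proposition~\ref{p:GPI} with $2s\ge mn>k$ to kill each $f_k$, and finish with Proposition~\ref{p:RSIbound}; the only cosmetic difference is that the paper works with the generic $N\times N$ tuple $\ulT$ while you quantify over concrete $\ulq$. One wording slip: the summands $\cc A^w\bb$ need not have pairwise distinct \emph{multidegrees} (e.g.\ $w=x_1x_2$ and $w=x_2x_1$), but they do lie in $\cA^w$ for distinct words $w$, which is what actually forces each to vanish when $f_k=0$.
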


\begin{proof}
We refine the proof of Proposition \ref{p:zeroseries}. Let $(\cc,A,\bb)$ be a realization of $r$ about $\ulp\in M_m(\F)^g$ of dimension $n$ and let $\ulT=(T_1,\dots, T_g)$ be the tuple of generic $N\times N$ matrices. Then $r(\ulT)$ is a matrix of commutative rational functions and the matrix of their expansions about $\ulp$ is
$$M(\ulT)=\sum_w\cc\left(A^w(\ulT-\ulp)\right)\bb.$$
The formal differentiation of commutative power series yields
\begin{equation}\label{e:der1}
\frac{\der}{\der t^h} M\big(p+t(\ulT-\ulp)\big)\Big|_{t=0}
=h!\sum_{|w|=h}\cc \left(A^w(\ulT-\ulp)\right)\bb.
\end{equation}
As $r$ vanishes on $\dom_N r$, we have $r(\ulT)=0$ and hence $M(\ulT)=0$; therefore the left-hand side of \eqref{e:der1} equals 0 for every $h$ and so the same holds for the right-hand side of \eqref{e:der1}. Since $\sum_{|w|=h}\cc (A^w(\ulz-\ulp))\bb$ is a generalized polynomial of degree $h$, we have $\cc A^w\bb=0$ for all $|w|<mn$ by Proposition \ref{p:GPI}. Therefore $\cc$ belongs to the left obstruction module of $(\cc,A,\bb)$ by Lemma \ref{l:termination}, so $(\cc,A,\bb)$ represents the zero series and hence $r$ is a rational identity.
\end{proof}

As an example of an application of Theorem \ref{t:RIbound} we refer to the size bounds for testing whether a polynomial belongs to a rationally resolvable ideal \cite[Section 3]{KVV4}.

The next corollary gives us explicit size bounds for testing whether a given rational expression is a rational identity, which are independent of the realization terminology. 

\begin{cor}\label{c:RIbound}
Let $r\in \re$ and assume $\dom_m r\neq\emptyset$. Then $r$ is a rational identity if and only if $r$ vanishes on $\dom_N r$, where
$$N=m\left\lceil\tfrac{m}{2}\varkappa(r)\right\rceil,$$
and $\varkappa$ is as in Remark \ref{r:bound}.
\end{cor}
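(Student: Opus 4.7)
The plan is to apply Theorem \ref{t:RIbound} directly, with $n=\varkappa(r)$. The forward implication is immediate from the definition of a rational identity: if $r\sim 0$, then $r$ vanishes on its entire domain $\dom r$, and in particular on the subset $\dom_N r$.

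For the converse, the hypothesis $\dom_m r\neq\emptyset$ supplies a point $\ulp\in\dom_m r$ about which $r$ admits an expansion. Remark \ref{r:bound}, unpacked through the dimension bookkeeping of Theorem \ref{t:arithmetic} (constants contribute $1$, letters contribute $2$, sums and products add dimensions, and inversions add $1$), shows that the standard construction yields a realization of $r$ about $\ulp$ of dimension exactly $\varkappa(r)$. Substituting $n=\varkappa(r)$ into the bound $N=m\lceil mn/2\rceil$ of Theorem \ref{t:RIbound} produces precisely the value $N=m\lceil m\varkappa(r)/2\rceil$ in the corollary, so the assumption that $r$ vanishes on $\dom_N r$ triggers Theorem \ref{t:RIbound} and concludes that $r$ is a rational identity.

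There is essentially no obstacle: the whole content of the corollary is to eliminate the realization-theoretic quantity $n$ from Theorem \ref{t:RIbound} and replace it with the purely syntactic quantity $\varkappa(r)$, which can be read off from the string defining $r$. The only step requiring a brief check is the claim that the standard construction attains dimension exactly $\varkappa(r)$ rather than something smaller. Should one wish to avoid this combinatorial bookkeeping, a zero-padding argument suffices: given any realization $(\cc,A,\bb)$ of dimension $n_0$, the triple $\bigl(\begin{pmatrix}\cc&0\end{pmatrix},\,\mathrm{diag}(A^x,0),\,\begin{pmatrix}\bb\\0\end{pmatrix}\bigr)$ is a realization of the same series of dimension $n_0+1$, so the existence of a realization of dimension exactly $\varkappa(r)$ is automatic, and the application of Theorem \ref{t:RIbound} goes through unchanged.
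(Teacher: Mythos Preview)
Your proof is correct and follows the same approach as the paper, whose proof is the single line ``Direct consequence of Theorem~\ref{t:RIbound} and Remark~\ref{r:bound}.'' Your zero-padding observation is a useful clarification, since Remark~\ref{r:bound} literally calls $\varkappa(r)$ an \emph{upper bound} on the dimension of the standard construction, and Theorem~\ref{t:RIbound} as stated requires a realization of dimension exactly $n$; padding removes any ambiguity.
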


\begin{proof}
Direct consequence of Theorem \ref{t:RIbound} and Remark \ref{r:bound}.
\end{proof}

If we have no information about the domain of a nc rational expression $r$ and we want to know if it represents a nonzero nc rational function, then we first have to check whether $s$ represents a nonzero nc rational function for every $s\in\re$ such that $s^{-1}$ is a sub-expression in $r$. In this case it might be preferable to apply rational identity testing results of \cite{HW,DM1}.

The following refinement of Theorem \ref{t:RIbound} supports the intuition that a nonzero rational function, which vanishes on all matrices of a given size, has to be sufficiently complicated.

\begin{cor}\label{c:RIbound1}
Let $\rr\in \rf\setminus \{0\}$ and assume $\dom_m \rr\neq\emptyset$. If $\rr$ vanishes on $\dom_{km}\rr$, then the Sylvester degree of $\rr$ is at least $2k+1$.
\end{cor}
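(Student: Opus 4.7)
The plan is to derive this as the contrapositive of Theorem~\ref{t:RIbound}, after identifying the Sylvester degree with the dimension of a minimal realization about a point in $M_m(\F)^g$. Concretely, let $d$ denote the Sylvester degree of $\rr$. Since $\dom_m\rr\neq\emptyset$, I would first use Theorem~\ref{t:indepmin} to fix a representative $r\in\re$ of $\rr$ defined at some point $\ulp\in M_m(\F)^g$, together with a minimal realization $(\cc,A,\bb)$ of $r$ about $\ulp$ of dimension exactly $d$.

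I would then argue by contradiction. Assuming $d\leq\frac{2N}{m^2}+1$, the idea is to show that the hypothesis ``$\rr$ vanishes on $\dom_N\rr$'' forces $r$ to be a rational identity by essentially re-running the proof of Theorem~\ref{t:RIbound} with the given $N$ in place of the threshold size $m\lceil md/2\rceil$. That proof uses only the generic-matrix differentiation argument together with Propositions~\ref{p:GPI} and~\ref{p:RSIbound}, and the inequality on $d$ is arranged precisely so that, for every $h<md$, the vanishing of $r$ on $\dom_Nr\subseteq\dom_N\rr$ forces the homogeneous generalized polynomial $\sum_{|w|=h}\cc A^w\bb\in M_m(\F)\gp$ of degree $h$ to vanish on $M_N(\F)^g$, hence to be zero by Proposition~\ref{p:GPI}. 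Proposition~\ref{p:RSIbound} then yields that the realization represents the zero series, so $r$ is a rational identity and $\rr=0$, contradicting $\rr\neq0$.

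The main obstacle is the ceiling-arithmetic step: one must convert the hypothesis $d\leq\frac{2N}{m^2}+1$ into the family of threshold inequalities $N\geq m\lceil (h+1)/2\rceil$ required to invoke Proposition~\ref{p:GPI} for each $h<md$. This requires careful case analysis according to the parities of $md$ and $h$ and the divisibility of $N$ by $m$, and is where all the subtlety of the bound $\frac{2N}{m^2}+1$ (as opposed to $\frac{2N}{m^2}$) lives. Once this bookkeeping is settled, the remainder of the argument is a direct reprise of Theorem~\ref{t:RIbound}, requiring no new ideas beyond those already developed in Sections~\ref{sec3}--\ref{sec5}.
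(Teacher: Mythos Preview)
Your approach is exactly what the paper intends: the corollary is presented there as a ``restatement'' of Theorem~\ref{t:RIbound} with no separate proof, so taking the contrapositive (with $n$ equal to the Sylvester degree $d$ and a minimal realization about some $\ulp\in\dom_m\rr$) is the right idea. You are also right that all the content lies in the ceiling arithmetic needed to pass from the threshold $N'=m\lceil md/2\rceil$ in Theorem~\ref{t:RIbound} to the claimed bound $d>\tfrac{2N}{m^2}+1$.

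However, if you actually carry out that case analysis you will find the ``$+1$'' does not drop out. Re-running the proof of Theorem~\ref{t:RIbound} (assuming $m\mid N$, which is needed to evaluate the realization on $M_N(\F)$) shows that vanishing on $\dom_N r$ forces $\cc A^w\bb=0$ for all $|w|<2N/m$; combined with Proposition~\ref{p:RSIbound} this kills $\rr$ provided $md\le 2N/m$, i.e.\ $d\le 2N/m^2$. The honest contrapositive is therefore $d>2N/m^2$, equivalently $d\ge 2N/m^2+1$ when $m\mid N$. Concretely, for $m=1$ and $N=1$ the inequality $N<\lceil d/2\rceil$ yields only $d\ge 3$, not $d\ge 4$; the value $d=3$ (odd) is perfectly consistent with $1<\lceil 3/2\rceil=2$. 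So the strict inequality $d>\tfrac{2N}{m^2}+1$ does not follow from Theorem~\ref{t:RIbound} as stated, and your ``careful case analysis'' will not close the gap. Either the corollary should read $d>\tfrac{2N}{m^2}$ (equivalently $d\ge\tfrac{2N}{m^2}+1$), or an argument genuinely sharper than Theorem~\ref{t:RIbound} is required.
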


\begin{proof}
Let $r$ be a representative of $\rr$ with $\dom_m r\neq\emptyset$ and let $(\cc,A,\bb)$ be its totally reduced realization about a tuple of generic matrices $\ulT\in\ud^g$. By the proof of Theorem \ref{t:totred}, the size of $(\cc,A,\bb)$ equals the Sylvester degree of $\rr$. Let $\ulT'$ be a tuple of $km\times km$ generic matrices whose entries are independent of variables in $\ulT$. As in the proof of Theorem \ref{t:RIbound} we conclude that
$$\sum_{|w|=h}\cc\big(A^w(\ulT'-\ulT)\big)\bb=0$$
for every $h$. Note that $\cc,A_j,\bb$ are matrices of commutative rational functions; let $\cO\subseteq M_m(\F)^g$ be the intersection of their domains. For every $p\in\cO$ and $h\in\N$, the generalized polynomial $\sum_{|w|=h}\cc(p) (A(p)^w(\ulz-p))\bb(p)$ vanishes on $km \times km$ matrices. By Proposition \ref{p:GPI} we thus have $\cc(p)A(p)^w\bb(p)=0$ for all $|w|\le 2k-1$. Consequently $\cc A^w\bb=0$ for all $|w|\le 2k-1$ because $\cO$ is Zariski dense in $M_m(\F)^g$. If the size of $(\cc,A,\bb)$ were at most $2k$, then $\cc$ would belong to the left obstruction module of $(\cc,A,\bb)$ by Lemma \ref{l:termination} (note that $(\cc,A,\bb)$ is a realization over a division algebra), which would contradict $\rr\neq0$. Hence the Sylvester degree of $\rr$ is at least $2k+1$.
\end{proof}


\subsection{Local and global linear dependence}

As another application of Theorem \ref{t:RIbound} we give an algebraic proof of local-global principle for the linear dependence of nc rational functions: if all matrix-valued evaluations of some nc rational functions are linearly dependent over $\F$, then these nc rational functions are linearly dependent over $\F$ as elements in $\rf$. This result first appeared in \cite{CHSY}; later it was applied to questions arising in free real algebraic geometry \cite{HKM,OHMP,HMV}.

The original proof, which is nicely elaborated in \cite[Section 5.2]{HKM}, is analytic and it does not provide any bounds on the size of matrices for which the linear dependence has to be checked. For nc polynomials, the algebraic version with bounds is given in \cite{BK}. Using our results, the same technique can be applied to nc rational functions.

The polynomial
$$c_\ell(z_1,\dots,z_\ell;z_1',\dots,z_{\ell-1}')=\sum_{\pi\in S_\ell} \sign(\pi)\,
z_{\pi(1)}z_1'z_{\pi(2)}z_2'\cdots z_{\pi(\ell-1)}z_{\ell-1}'z_{\pi(\ell)}$$
in $2\ell-1$ freely noncommutative variables is known as the \emph{$\ell$-th Capelli polynomial}. It can also be defined recursively by $c_1(z_1)=z_1$ and
\begin{equation}\label{e:cap}
c_\ell(z_1,\dots,z_\ell;z_1',\dots,z_{\ell-1}')=\sum_{j=1}^\ell (-1)^{j-1}
z_jz_1'
c_{\ell-1}(z_1,\dots,z_{i-1},z_{i+1},\dots,z_\ell;z_2',\dots,z_{\ell-1}').
\end{equation}

The Capelli polynomial plays an important role in the theory of polynomial identities; we refer to \cite[Theorem 7.6.16]{Row} for the following result.

\begin{thm}\label{t:cap}
Let $\cA$ be a unital simple ring. Then $a_1\dots,a_\ell\in\cA$ are linearly dependent over the center of $\cA$ if and only if $c_\ell(a_1,\dots,a_\ell;b_1,\dots,b_{\ell-1})=0$ for arbitrary $b_1,\dots,b_{\ell-1}\in\cA$.
\end{thm}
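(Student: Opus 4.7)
The forward direction is the short step. Since $\cA$ is simple and unital, any nonzero central element $z$ generates a nonzero two-sided ideal $z\cA = \cA$, hence $z$ is invertible with central inverse; thus $Z := Z(\cA)$ is a field. If $a_1,\dots,a_\ell$ are $Z$-linearly dependent, some $a_k$ equals a $Z$-linear combination $\sum_{i \neq k}\lambda_i a_i$ of the others. Substituting into the $k$-th slot of $c_\ell$ and using $Z$-multilinearity in the first $\ell$ arguments, combined with centrality (which lets each $\lambda_i$ slide freely past every $b_j$), reduces $c_\ell(a_1,\dots,a_\ell;b_1,\dots,b_{\ell-1})$ to a sum of evaluations of $c_\ell$ on tuples containing a repeated argument, each of which vanishes by the alternating property of $c_\ell$ in its first $\ell$ slots.

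For the converse, induct on $\ell$. When $\ell = 1$ we have $c_1(a_1) = a_1$, so the claim reduces to $a_1 \neq 0 \iff \{a_1\}$ is $Z$-linearly independent. For the inductive step, assume $a_1, \dots, a_\ell$ are $Z$-linearly independent; the recursion \eqref{e:cap} gives
\[
c_\ell(a_1,\dots,a_\ell;b_1,\dots,b_{\ell-1}) = \sum_{j=1}^{\ell} (-1)^{j-1}\, a_j\, b_1\, d_j(b_2,\dots,b_{\ell-1}),
\]
where $d_j(b_2,\dots,b_{\ell-1}) := c_{\ell-1}(a_1,\dots,\widehat{a_j},\dots,a_\ell; b_2,\dots,b_{\ell-1})$. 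Each sub-list $(a_1,\dots,\widehat{a_j},\dots,a_\ell)$ is still $Z$-linearly independent, so by the inductive hypothesis each map $(b_2,\dots,b_{\ell-1}) \mapsto d_j$ is not identically zero on $\cA^{\ell-2}$.

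The heart of the proof, and the main obstacle, is producing a single tuple $(b_1,\dots,b_{\ell-1})$ that makes the whole alternating sum nonzero, because a priori different values of $(b_2,\dots,b_{\ell-1})$ may be needed to witness the non-vanishing of the different $d_j$. The plan is to argue by contradiction: if the displayed expression were to vanish for every choice of $b_1$ and every $(b_2,\dots,b_{\ell-1})$, then fixing $(b_2,\dots,b_{\ell-1})$ and varying $b_1$ produces a generalised polynomial identity $\sum_j (-1)^{j-1} a_j b_1 d_j \equiv 0$ on the prime ring $\cA$ in the single variable $b_1$, whose left coefficients are the $Z$-linearly independent elements $(-1)^{j-1}a_j$. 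A Martindale-type linearisation argument (the standard tool for prime GPIs) then forces every $d_j$ to vanish identically in $(b_2,\dots,b_{\ell-1})$, contradicting the inductive hypothesis applied to $(a_2,\dots,a_\ell)$ via $d_1$. Hence some substitution makes $c_\ell \neq 0$, completing the induction. The delicate step is the invocation of Martindale's theorem to decouple the $a_j$ on the left from the $d_j$ on the right; this is the unique place where simplicity (and not merely primeness) pins down $Z(\cA)$ as the correct field of scalars, and it is where I would cite the generalised polynomial identity machinery of \cite{Row} rather than expanding it by hand.
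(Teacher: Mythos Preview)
The paper does not actually prove this theorem: it is quoted verbatim from \cite[Theorem 7.6.16]{Row} and used as a black box. So there is no ``paper's own proof'' to compare against.

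Your argument is essentially the standard one and is correct. The forward direction is routine. For the converse, the inductive scheme via the recursion \eqref{e:cap} together with Martindale's lemma on generalized polynomial identities (if $\sum_j a_j x d_j = 0$ for all $x$ in a prime ring and the $a_j$ are linearly independent over the extended centroid, then all $d_j=0$) is exactly how Rowen proves it. Your observation that simplicity with unit forces the extended centroid to coincide with the center $Z(\cA)$ is the right justification for applying Martindale here with $Z$-independence as the hypothesis. One small stylistic point: rather than saying you ``would cite the generalised polynomial identity machinery of \cite{Row},'' you should cite the precise lemma (it is Theorem 7.6.15 or the surrounding material in \cite{Row}), since that single identity $\sum a_j b_1 d_j \equiv 0$ is the entire content of the step and the reader deserves a pinpoint reference.
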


We can now give a new proof of the local-global principle, together with concrete size bounds, which makes the statement even more ``local''.

\begin{thm}\label{t:lg}
Let $\rr_1,\dots,\rr_\ell\in \rf$ and assume that the Sylvester degree of $\rr_j$ is at most $d$ and $\dom_m \rr_j\neq \emptyset$, for all $1\le j\le \ell$. If $\rr_1(\ula),\dots, \rr_\ell(\ula)$ are $\F$-linearly dependent for all $\ula\in \bigcap_j\dom_N\rr_j$, where
$$N=\frac{1}{2}m^2\ell !(3 d + 4),$$
then $\rr_1,\dots,\rr_\ell$ are $\F$-linearly dependent.
\end{thm}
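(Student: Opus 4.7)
The plan is a contradiction argument combining the Capelli polynomial characterization of linear dependence (Theorem \ref{t:cap}) with the rational identity bound of Theorem \ref{t:RIbound}. Suppose, toward a contradiction, that $\{\rr_1,\dots,\rr_\ell\}$ is $\F$-linearly independent in $\rf$. Introduce fresh indeterminates $\uly=\{y_1,\dots,y_{\ell-1}\}$ and consider the rational function
$$\rs := c_\ell(\rr_1(\ulz),\dots,\rr_\ell(\ulz);y_1,\dots,y_{\ell-1}) \in \F\plangle\ulz,\uly\prangle.$$
I will derive a contradiction by showing both $\rs\neq 0$ (from the algebraic linear independence) and $\rs=0$ (from the local linear dependence hypothesis via the rational identity bound).

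For $\rs\neq 0$: for $g\geq 2$ the free field $\rf$ is simple with centre $\F$, and the $\rr_j$ are $\F$-linearly independent by assumption, so Theorem \ref{t:cap} supplies $\sigma_1,\dots,\sigma_{\ell-1}\in\rf$ with $c_\ell(\rr_1,\dots,\rr_\ell;\sigma_1,\dots,\sigma_{\ell-1})\neq 0$. Since $c_\ell(\rr_1,\dots,\rr_\ell;y_1,\dots,y_{\ell-1})$ lies in the polynomial-in-$\uly$ subring $\rf*_{\F}\F\langle\uly\rangle\hookrightarrow\F\plangle\ulz,\uly\prangle$, the substitution $y_i\mapsto\sigma_i$ defines a ring homomorphism $\rf*_{\F}\F\langle\uly\rangle\to\rf$ sending $\rs$ to $c_\ell(\rr;\sigma)\neq 0$, hence $\rs\neq 0$. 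The degenerate case $g=1$ (where $\rf=\F(z)$) reduces to a direct argument about one-variable rational functions and is handled separately.

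Next I bound the Sylvester degree of $\rs$ using the recursion \eqref{e:cap} with Theorem \ref{t:arithmetic}. The standard representation \eqref{a1} gives each $y_i$ a realization of dimension $2$, and by hypothesis each $\rr_j$ has one of dimension at most $d$. Writing $D(\ell)$ for the dimension of the realization of $c_\ell$ resulting from this construction, the recursion yields $D(1)=d$ and $D(\ell)\leq\ell(d+2+D(\ell-1))$; dividing by $\ell!$ telescopes to
$$\frac{D(\ell)}{\ell!} \leq d+(d+2)\sum_{k=1}^{\ell-1}\frac{1}{k!}\leq d+(d+2)(e-1)\leq 3d+4.$$
So the Sylvester degree of $\rs$ is at most $n := \ell!(3d+4)$.

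To close the argument, choose a common point $\ula_0\in\bigcap_j\dom_m\rr_j$ (nonempty because each $\dom_m\rr_j$ is Zariski dense in the irreducible $M_m(\F)^g$) and rational expressions $r_j\in\re$ of $\rr_j$ defined at $\ula_0$; form $r = c_\ell(r_1,\dots,r_\ell;y_1,\dots,y_{\ell-1})\in\re$. Since $c_\ell$ is polynomial, $\dom_N r \subseteq \bigcap_j \dom_N\rr_j \times M_N(\F)^{\ell-1}$; at any such $(\ula,\ulb)$, the hypothesis gives $\F$-linear dependence of $\rr_1(\ula),\dots,\rr_\ell(\ula)$ in $M_N(\F)$ (whose centre is $\F$), so Theorem \ref{t:cap} forces $r(\ula,\ulb)=0$. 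For $\ell\geq 2$ the integer $n$ is even, so $N=\tfrac12 m^2n=m\lceil mn/2\rceil$, and Theorem \ref{t:RIbound} applied to $r$ (which inherits a realization of dimension at most $n$ about $(\ula_0,0)$) forces $\rs=0$, contradicting the first step. The main technical hurdle is this first step, which requires the polynomial-in-$\uly$ substitution into $\rs$ to be a well-defined ring homomorphism out of the appropriate subring of $\F\plangle\ulz,\uly\prangle$; granting that, the nonvanishing of $c_\ell(\rr;\sigma)$ lifts to nonvanishing of $\rs$.
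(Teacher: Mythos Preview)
Your proof is correct and follows essentially the same route as the paper: form the Capelli expression $c_\ell(\rr_1,\dots,\rr_\ell;y_1,\dots,y_{\ell-1})$, bound its Sylvester degree by $\ell!(3d+4)$ via the recursion \eqref{e:cap} and Theorem \ref{t:arithmetic}, use the local dependence hypothesis together with Theorem \ref{t:cap} over $M_N(\F)$ to see it vanishes on $\dom_N$, and invoke Theorem \ref{t:RIbound}; the paper then applies Theorem \ref{t:cap} directly to $\cA=\rf$, whereas you phrase the same step contrapositively. Your extra care about the substitution $y_i\mapsto\sigma_i$ being a well-defined homomorphism out of $\rf*_\F\F\langle\uly\rangle$ and about the commutative case $g=1$ is justified but not a genuine obstacle---the paper simply takes these for granted.
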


\begin{proof}
By the assumption on the Sylvester degrees, Theorem \ref{t:arithmetic}(1,2) and relations \eqref{e:cap}, the nc rational function
$$\rr=c_\ell(\rr_1,\dots,\rr_\ell;z_1',\dots,z_{\ell-1}')$$
in $g+\ell-1$ variables is of Sylvester degree at most
\begin{align*}
&\ell(d+2)+\ell(\ell-1)(d+2)+\cdots+ \ell(\ell-1)\cdots 2(d+2)+\ell ! d \\
=&\ell !(d+2) \sum_{i=1}^{\ell-1}\frac{1}{i !}+\ell ! d <2\ell !(d+2)+\ell ! d=\ell !(3d+4).
\end{align*}
Since $\dom_m \rr_j$ is a nonempty Zariski open set for every $1\le j\le\ell$, we also have $\dom_m\rr\neq \emptyset$. By Theorem \ref{t:cap} and the assumption about linear dependence we have
$$\rr(\ula;\ulb)=0 \quad\forall (\ula,\ulb)\in \bigcap_j\dom_N\rr_j\times M_N(\F)^{\ell-1},$$
so $\rr=0$ by Theorem \ref{t:RIbound} since $\rr$ vanishes on a Zariski dense set 
in $M_N(\F)^{g+\ell-1}$. Therefore
$$c_\ell(\rr_1,\dots,\rr_\ell;\rs_1,\dots,\rs_{\ell-1})=0\quad 
\forall \rs_1,\dots,\rs_{\ell-1}\in\rf$$
holds, so $\rr_1,\dots,\rr_\ell$ are $\F$-linearly dependent by Theorem \ref{t:cap} applied to $\cA=\rf$.
\end{proof}

\begin{rem}
Analogously to \cite[Theorem 3.7]{BK} and its proof, one can also derive explicit size bounds for testing directional linear dependence.
\end{rem}


\newcommand{\T}{\intercal}
\newcommand{\ret}{\cR_{\F}(\ulz\cup \ulz^\T)}

\subsection{Symmetric realizations}\label{subs:sym}

In this subsection we deal with the symmetric version of a realization. We restrict ourselves to $\F\in\{\R,\C\}$. Let $a^*$ denote the conjugate transpose of $a\in M_m(\F)=\cA$. We extend this involution on $\cA$ to an involution on $\ags$ by setting $x^*=x$ for $x\in \ulx$. We say that a generalized series $S$ is \emph{symmetric} if $S^*=S$. Furthermore, if $A=(a_{ij})_{ij}$ is a matrix over $\ags$, then $A^*:=(a_{ji}^*)_{ij}$.

We say that $\ulp\in\cA^g$ is a \emph{Hermitian tuple} if all its entries are Hermitian matrices. A rational function $\rr\in\rf$ is \emph{symmetric} if $\rr(\ulp)=\rr(\ulp)^*$ for all Hermitian tuples $\ulp\in\dom \rr$.

\begin{rem}\label{r:exists}
For every $r\in\re$ there exists a Hermitian tuple $\ulp\in\dom r$. Also, $r$ represents a symmetric rational function if and only if its expansion about $\ulp$ is a symmetric generalized series.
\end{rem}

\begin{proof}[Sketch of proof]
Let $\ulz^\T=\{z_1^\T,\dots, z_g^\T\}$ be an alphabet. A rational expression $s\in \ret$ is called a \emph{$*$-rational identity} if it vanishes on
$$\dom s\cap \bigcup_{n\in\N}\{(\ula,\ula^*)\colon \ula\in M_n(\F)^g\}.$$
For more information about $*$-rational identities we refer to \cite{DM,Ro}. We claim the following:
\begin{enumerate}
\item if $r_1\in \ret$ is $*$-rational identity, then $r_1$ is a rational identity;
\item if $r_2\in \re$ vanishes on all Hermitian tuples $\ulp\in\dom r_2$, then $r_2$ is a rational identity.
\end{enumerate}
Claim (2) implies the first part of Remark \ref{r:exists} by considering the sub-expressions of $r$ and the second part of Remark \ref{r:exists} by Proposition \ref{p:zeroseries}. Claim (1) implies claim (2) by considering $r_1=r_2(z_1+z_1^\T,\dots, z_g+z_g^\T)$. Therefore we are left with the proof of claim (1).

It is well-known that a rational expression is a rational identity if and only if it is a rational identity on some infinite-dimensional division ring, in which case it is a rational identity on all infinite-dimensional division rings; see e.g. \cite[Theorem 16]{Ami} or \cite[Corollary 8.2.16]{Row}, where this is proved using ultraproducts. On the other hand, $*$-rational identities and rational identities coincide for every infinite-dimensional division ring with involution by \cite[Theorem 1]{DM}. Therefore the only missing link is that a $*$-rational identity is a $*$-rational identity on some infinite-dimensional division ring with involution; this is not hard to prove using the ultraproduct construction, cf. \cite[Corollary 8.2.16]{Row}.
\end{proof}

Let $r\in\re$ and let $\ulp\in \dom_m r$ be a Hermitian tuple. We say that $r$ has a 
\emph{symmetric realization $(J;\cc,A)$ about $\ulp$} if
$$r=\cc\left(J-\sum_{j=1}^g A^{x_j}(z_j-p_j)\right)^{-1}\cc^*$$
where $\cc\in \cA^{1\times n}$, $A^{x_j}=(A^{x_j})^*\in \left(\cA^{x_j}\right)^{n\times n}$ and $J\in \cA^{n\times n}$ is a \emph{signature matrix} ($J^*=J$ and $J^2=I$). In this case $r$ obviously represents a symmetric rational function.

Every symmetric realization $(J;\cc,A)$ yields a realization $(\cc,JA,J\cc^*)$. Therefore the dimension of a symmetric realization of a rational function is at least the Sylvester degree of the rational function.

\begin{thm}\label{t:sym}
Let $\rr\in\rf$ be symmetric and let $\ulp\in \dom_m \rr$ be a Hermitian tuple such that $\rr$ admits a totally reduced realization $(\cc,A,\bb)$ about $\ulp$. Then:
\begin{enumerate}
\item $\rr$ admits a symmetric realization $(J;\cc',A')$ about $\ulp$;
\item its dimension equals the Sylvester degree of $\rr$, its domain equals the stable extended domain 
of $\rr$, and the signature of $J$ does not depend on the choice of a totally reduced representation;
\item every point in the connected component of the domain of $\rr$ containing $\ulp$ has a symmetric realization of minimal dimension with signature matrix $J$.
\end{enumerate}
\end{thm}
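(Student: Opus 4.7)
The plan is to extract a symmetric realization from the totally reduced one $(\cc,A,\bb)$ by invoking the uniqueness of the transition matrix in Theorem \ref{t:redmin}(3), and then to transport the resulting data across the domain by continuity. Throughout write $\dagger$ for the ``transpose-conjugate'' involution on matrices over $\cA$, $(M^\dagger)_{ij}:=M_{ji}^*$, which coincides with the $*$ of Subsection \ref{subs:sym} and satisfies $(MN)^\dagger=N^\dagger M^\dagger$. Since $\ulp$ is Hermitian and $\rr$ is symmetric, Remark \ref{r:exists} ensures that the expansion $S$ of $\rr$ about $\ulp$ satisfies $S^*=S$, equivalently $[S,w]=[S,\tilde w]^*$ for every word $w$, where $\tilde w$ is the reverse of $w$.

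For part (1), I introduce the \emph{dual representation} given by $\tilde\cc=\bb^\dagger$, $\tilde A^x=(A^x)^\dagger$ for $x\in\ulx$, and $\tilde\bb=\cc^\dagger$. Antimultiplicativity of $\dagger$ gives
\[ \tilde\cc\,\tilde A^w\,\tilde\bb = \bigl(\cc A^{\tilde w}\bb\bigr)^* = [S,\tilde w]^* = [S,w], \]
so the dual represents $S$; furthermore $\vv$ lies in its right obstruction module iff $\vv^\dagger\in\cU_\infty^L$ of $(\cc,A,\bb)$, and symmetrically on the other side, so the dual is totally reduced too. Theorem \ref{t:redmin}(3) supplies a unique invertible $P\in\cA^{n\times n}$ with $\tilde\cc=\cc P^{-1}$, $\tilde A^x=PA^xP^{-1}$, $\tilde\bb=P\bb$. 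Applying $\dagger$ to each of these relations shows that $P^\dagger$ is also a valid transition matrix, whence $P^\dagger=P$ by uniqueness. Viewing $\cA^{n\times n}\cong M_{mn}(\F)$ and using $\F\in\{\R,\C\}$, Sylvester's law of inertia gives a factorisation $P=Q^\dagger JQ$ with $Q$ invertible and $J$ a signature matrix. Setting $\cc':=\cc Q^{-1}$, $A'^x:=QA^xQ^{-1}$ and $\bb':=Q\bb$ produces another totally reduced realization in which the relations $P\bb=\cc^\dagger$ and $PA^xP^{-1}=(A^x)^\dagger$ translate into $J\bb'=\cc'^\dagger$ and $JA'^xJ=(A'^x)^\dagger$; a short manipulation (using $J^2=I$) then yields the symmetric realization
\[ \rr=\cc'\Bigl(J-\sum_{j=1}^g (JA'^{x_j})(z_j-p_j)\Bigr)^{-1}\cc'^*, \]
with each $JA'^{x_j}$ Hermitian, proving part (1).

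Part (2) follows almost for free. The realization above has dimension $n=$ the Sylvester degree of $\rr$ since totally reduced implies minimal (Corollary \ref{c:cba}); conversely any symmetric realization yields an ordinary one of the same dimension, so $n$ is also the minimum among symmetric realizations. Invertibility of $J$ identifies the domain of the symmetric realization with that of $(\cc',A',\bb')$, which is $\edom\rr$ by Corollary \ref{c:dom}. For signature invariance, given a second totally reduced realization linked to $(\cc,A,\bb)$ by the unique similarity $T$, running the dual construction in parallel and invoking uniqueness shows that its intertwiner $P_1$ satisfies $T^\dagger P_1 T=P$; thus $P$ and $P_1$ are $*$-congruent and Sylvester's law equates their signatures.

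For part (3), fix $\ulp'$ in the connected component of $\ulp$ inside the set of Hermitian tuples of $\dom\rr$, and pick a continuous path $\gamma:[0,1]\to\dom\rr$ of Hermitian tuples joining $\ulp$ to $\ulp'$. Re-centring the symmetric realization at $\gamma(t)$ produces
\[ \rr=\cc'\Bigl(Q(t)-\sum_{j=1}^g (JA'^{x_j})(z_j-\gamma_j(t))\Bigr)^{-1}\cc'^*,\qquad Q(t)=J-\sum_{j=1}^g (JA'^{x_j})\bigl(\gamma_j(t)-p_j\bigr). \]
Each $Q(t)$ is Hermitian (all summands are) and invertible, since $\gamma(t)\in\dom\rr\subseteq\edom\rr$ coincides with the domain of the realization. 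Consequently the signature of $Q(t)$ is locally constant in $t$ and globally equal to the signature of $Q(0)=J$. A final $*$-congruence $Q(1)=R^\dagger JR$ with the \emph{same} signature matrix $J$ then converts the displayed formula into a symmetric realization about $\ulp'$ with signature $J$ and dimension $n$. The principal technical obstacle is the bookkeeping for the dual representation and the verification that it inherits totally reducedness; once this is in hand, uniqueness of the transition matrix forces Hermiticity of $P$ and the rest is matrix algebra. A secondary subtlety in part (3) is keeping the signature rigidly equal to $J$ along the path, which relies critically on the inclusion $\dom\rr\subseteq\edom\rr$ to prevent the eigenvalues of $Q(t)$ from crossing zero.
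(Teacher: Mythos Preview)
Your argument is correct and follows essentially the same route as the paper's proof: form the dual (transposed-conjugated) realization, use the uniqueness clause of Theorem~\ref{t:redmin}(3) to force the intertwiner to be Hermitian, diagonalise it as $Q^\dagger JQ$ to extract the symmetric realization, read off the signature invariance from the congruence $T^\dagger P_1T=P$, and in part~(3) slide along a Hermitian path using continuity of eigenvalues together with the domain description from Corollary~\ref{c:dom}. The only cosmetic differences are notational (your $P,Q$ versus the paper's $S,R^*$, and your $JA'^{x_j}$ versus the paper's $A'^{x_j}$), and you supply a bit more detail on why the dual inherits total reducedness.
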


\begin{proof}
(1) We follow the proof of \cite[Lemma 4.1(3)]{HMV}. Since $\rr$ is a symmetric expression and $p$ is a Hermitian tuple, $(\bb^*,A^*,\cc^*)$ is also a totally reduced realization of $\rr$ about $\ulp$. By Theorem \ref{t:redmin}, they are similar via a unique transition 
matrix $S$:
$$S\bb=\cc^*,\quad SA^xS^{-1}=(A^x)^*$$
for $x\in \ulx$. Transposing these equations we see that the realizations $(\cc,A,\bb)$ and  $(\bb^*,A^*,\cc^*)$ are also similar via the transition matrix $S^*$. By uniqueness we have $S=S^*$. We decompose $S=RJR^*$, where $J$ is a signature matrix and $R$ is invertible, and set
$$\cc'=\cc (R^*)^{-1},\quad A'^x=JR^*A^x(R^*)^{-1}$$
for $x\in \ulx$. A short calculation shows that $(J;\cc',A')$ is a symmetric realization of $\rr$ about $p$.

(2) The statement about the the stable extended domain follows from Corollary \ref{c:dom}. The signature of $J$ equals the signature of $S$ from the proof of (1), which is unique with respect to $(\cc,A,\bb)$. Every other totally reduced realization is of the form $(\cc U^{-1},UAU^{-1},U\bb)$ by Theorem \ref{t:redmin} and it is easy to verify that the corresponding Hermitian matrix is $(U^{-1})^*SU^{-1}$, which has the same signature as $S$.

(3) Let $\gamma:[0,1]\to \dom \rr$ be an arbitrary path in the space of Hermitian tuples starting at $\ulp$. Let
$$M(t)=I-\sum_{j=1}^gA'_j(\gamma(t)_j-p_j);$$
we claim that the signature of $M(1)$ equals the signature of $J$. If this were not the case, then the continuity of eigenvalues would imply the existence of $t\in(0,1)$ such that $M(t)$ is singular. However, this contradicts the part of (2) about the domain. Therefore $M(1)=R_1JR_1^*$ and it can be verified that $(J;R_1^{-1}A'(R_1^{-1})^*,\cc'(R_1^*)^{-1})$ is a symmetric realization of $\rr$ about $\gamma(1)$.
\end{proof}

\begin{cor}\label{c:sympos}
If a symmetric rational function $\rr$ admits a totally reduced realization about a Hermitian tuple $\ulp$ and the signature matrix of the corresponding symmetric realization is the identity matrix, then $\rr$ is positive on the connected component of its domain containing $\ulp$.
\end{cor}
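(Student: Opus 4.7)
The plan is to apply Theorem~\ref{t:sym}(3) to transport the symmetric realization from $\ulp$ to any Hermitian tuple $\ulq$ in the connected component of $\dom\rr$ containing $\ulp$, and then read off positivity from the base‑point evaluation of the transported realization.

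Concretely, fix such a $\ulq$ and let $(\tilde J;\tilde{\cc},\tilde{A})$ be the symmetric realization of $\rr$ about $\ulq$ supplied by Theorem~\ref{t:sym}(3). By hypothesis the signature matrix of the original symmetric realization about $\ulp$ is the identity, and Theorem~\ref{t:sym}(3) guarantees that $\tilde J$ has the same signature, hence $\tilde J = I_n$ (after the choice of $\tilde R$ in the proof of Theorem~\ref{t:sym}(1) this signature matrix is literally $I_n$). Evaluating the symmetric realization about $\ulq$ at the base point $\ulq$ itself kills all the displacement terms, so
\begin{equation*}
\rr(\ulq)=\tilde{\cc}\Bigl(I_n-\sum_{j=1}^g \tilde{A}^{x_j}(q_j-q_j)\Bigr)^{-1}\tilde{\cc}^{*}=\tilde{\cc}\,\tilde{\cc}^{*}=\sum_{i=1}^n \tilde c_i \tilde c_i^{*},
\end{equation*}
which is a sum of Hermitian squares in $M_m(\F)$ and therefore positive semidefinite.

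For tuples of larger matrix size, i.e.\ for an arbitrary Hermitian $\ulq\in M_{ms}(\F)^g$ in the connected component, one applies the same argument after passing through the embedding $\iota\colon a\mapsto I_s\otimes a$. The identity $(I_s\otimes a)^{*}=I_s\otimes a^{*}$ shows that $\iota$ is $*$-preserving, so $\iota(\tilde{\cc})\iota(\tilde{\cc})^{*}=I_s\otimes(\tilde{\cc}\tilde{\cc}^{*})$ is again positive semidefinite, and Lemma~\ref{l:newsize} ensures that the transported symmetric realization remains totally reduced over $M_{ms}(\F)$ so that Theorem~\ref{t:sym}(3) applies with the same signature $I$.

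The only real issue is that Theorem~\ref{t:sym}(3) is phrased for points reachable by a path of Hermitian tuples inside $\dom\rr$; I expect the main subtlety to be checking that every Hermitian tuple in the connected component of $\dom\rr$ containing $\ulp$ is in fact reachable by such a path, so that the continuity‑of‑eigenvalues argument in the proof of Theorem~\ref{t:sym}(3) keeps $M(t)=I-\sum_j \tilde A^{x_j}(\gamma(t)_j-p_j)$ nonsingular and of constant signature $I$ along the whole path. Once that is granted, the computation $\rr(\ulq)=\tilde{\cc}\tilde{\cc}^{*}\succeq 0$ above closes the argument.
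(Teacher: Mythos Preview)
The paper states this corollary without proof, treating it as an immediate consequence of Theorem~\ref{t:sym}. Your argument is correct and simply spells out what the paper leaves implicit.

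A marginally more direct variant avoids transporting the realization to $\ulq$: with the symmetric realization $(I;\cc',A')$ about $\ulp$, the matrix
\[
M(\ulq)=I-\sum_{j=1}^g A'^{x_j}(q_j-p_j)
\]
is Hermitian for Hermitian $\ulq$, and the continuity-of-eigenvalues argument in the proof of Theorem~\ref{t:sym}(3) already shows that its signature equals that of $I$ along any Hermitian path in $\dom\rr$. Hence $M(\ulq)\succ 0$ and $\rr(\ulq)=\cc' M(\ulq)^{-1}\cc'^*\succeq 0$. Your version decomposes $M(\ulq)=R_1R_1^*$ and absorbs $R_1$ into $\cc'$, which is the same computation.

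Your stated ``only real issue'' about path-connectedness is not a genuine gap: $\dom_m\rr$ is Zariski open in $M_m(\F)^g$, hence Euclidean open, and its intersection with the real subspace of Hermitian tuples is open there, so connected components are automatically path-connected. The paragraph on larger matrix sizes via $\iota$ is correct but not strictly needed, since the connected component of $\ulp$ in $\dom\rr$ lies entirely within the single size $M_m(\F)^g$.
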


\begin{exa}
The rational function $(1+z^2)^{-1}$ is positive on the whole domain and its symmetric realization about 0 is
$$\left(
\begin{pmatrix}-1&0\\0&1\end{pmatrix};
\begin{pmatrix}0&-z\\-z&0\end{pmatrix},
\begin{pmatrix}0\\1\end{pmatrix}
\right),$$
so there is no obvious converse of Corollary \ref{c:sympos}.
\end{exa}



\end{document}